\author{Horia Mania}
\title{Wilmes' Conjecture and Boundary Divisors}
\date{\today}
\newtheorem{thm}{Theorem}
\newtheorem*{theorem}{Theorem}
\newtheorem{dfn}{Definition}
\newtheorem{rmk}{Remark}
\newtheorem{lemma}{Lemma}
\newtheorem{corollary}{Corollary}
\newtheorem{conjecture}{Conjecture}
\newtheorem{proposition}{Proposition}
\newtheorem*{conjecturess}{Conjecture}
\newtheorem*{propositionss}{Proposition}
\theoremstyle{definition}
\newtheorem{example}{Example}
\newcommand{\case}[1]{\subsubsection*{#1}}
\newcommand{\N}{\operatorname{N}}
\newcommand{\Lap}{\operatorname{L}}
\newcommand{\Cl}{\operatorname{Cl}}
\newcommand{\dist}{\operatorname{dist}}
\newcommand{\supp}{\operatorname{supp}}
\newcommand{\divisors}{\operatorname{div}}
\newcommand{\tG}{\operatorname{\widetilde{G}_\Pi}}
\newcommand{\AUS}{\operatorname{AUS(\tG)}}
\newcommand{\B}{\operatorname{B}}
\newcommand{\T}{\operatorname{T}}
\newcommand{\Pcal}{\operatorname{\mathcal{P}}}
\newcommand{\Jcal}{\operatorname{\mathcal{J}}}
\newcommand{\Acal}{\operatorname{\mathcal{A}}}
\newcommand{\Ecal}{\operatorname{\mathcal{E}}}
\newcommand{\Bcal}{\operatorname{\mathcal{B}}}
\newcommand{\tV}{\operatorname{\widetilde{V}_\Pi}}
\newcommand{\tE}{\operatorname{\widetilde{E}_\Pi}}
\newcommand{\Oo}{\operatorname{O}}
\newcommand{\ff}{\operatorname{f}}
\newcommand{\AO}{\operatorname{AO(\tG)}}
\newcommand{\img}{\operatorname{Img}}
\newcommand{\R}{\operatorname{R}}
\newcommand{\Hom}{\operatorname{\widetilde{H}}}
\newcommand\restr[2]{{
  \left.\kern-\nulldelimiterspace 
  #1 
  \vphantom{\big|} 
  \right|_{#2} 
  }}
\begin{document}

\tikzstyle{test}=[scale = .7,circle, fill = black!20, minimum size=3mm]
\tikzstyle{testB}=[scale = .7,circle, fill = black, minimum size=3mm]
\tikzstyle{testR}=[scale = .7,circle, fill = red, minimum size=3mm]
\tikzstyle{test2}=[scale = 1.5,circle, fill = black!0, inner sep = 0pt, minimum size=3mm]
\tikzstyle{test3}=[scale = 1.4,circle, fill = black!0, minimum size=3mm, inner sep = 0pt]
\tikzstyle{test4}=[scale = 1,circle, fill = black!0, inner sep = 0pt, minimum size=1pt]

\maketitle

\begin{abstract}
 This paper is concerned with Wilmes' conjecture regarding abelian sandpile models, as presented in \cite{W}. We introduce the concept of boundary divisors and use this to prove the conjecture for the first Betti number. Further results suggest that this method could be used to tackle the general case of the problem.   
\end{abstract}

\section{Introduction}

In this section we present Wilmes' conjecture and give an overview of the current work. The introduction to the problem mostly follows the exposition in \cite{W}. Familiarity with the sandpile model, also called the chip-firing model, is assumed. The reader new to the subject can refer to \cite{W} or \cite{PPW}. A more detailed presentation of the basic results concerning the chip-firing model can be found in \cite{AH}. In addition, the reader must be acquainted with Dhar's burning algorithm. It was first introduced in \cite{Dha}, but an exposition more suited for our purposes can be found in \cite{ASW}.

Throughout this paper, $G = (V,E)$ represents an undirected, connected graph with vertex set $V$ and edge set $E$. It is allowed to have multiple edges, but loops are disregarded. In order to state Wilmes' conjecture we need to introduce the Betti numbers of the homogeneous toppling ideal of the sandpile model. Usually, these are introduced by means of minimal free resolutions, and such a presentation can be found in both \cite{W} and \cite{PPW}. Related results concerning free resolutions of the toppling ideal, among others, appear in the paper by Manjunath and Sturmfels \cite{MS}. However, for our purposes it is enough to view the Betti numbers as the dimensions of the homology groups of a certain simplicial complex that we denote as $\Delta_D$.  See subsection \ref{background} for details. This is possible by Hochster's formula which we now state (\cite{PPW}, \cite{PS}): 

\begin{theorem}[Hochster's formula]
The $k$-th Betti number of a divisor $D$ is the dimension of the $(k-1)$-st reduced homology group of $\Delta_D$ over the complex numbers:

\[
\beta_{k,D} = \dim \Hom_{k-1} (\Delta_D, \mathbb{C}).
\]
\end{theorem}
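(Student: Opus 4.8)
The plan is to deduce this from the standard computation of multigraded Betti numbers via the Koszul complex, exploiting that the homogeneous toppling ideal is a lattice ideal. Write $S = \mathbb{C}[x_v : v \in V]$ and let $I$ be the homogeneous toppling ideal, so that $S/I$ is graded by the group of divisor classes of $G$ and, by the definition of Betti numbers through a minimal free resolution, $\beta_{k,D} = \dim_{\mathbb{C}} \operatorname{Tor}_k^S(S/I,\mathbb{C})_D$ for a divisor $D$ (this already shows $\beta_{k,D}$ depends only on the class of $D$). The first step is to compute these $\operatorname{Tor}$ modules from the Koszul resolution $K_\bullet = K_\bullet(x_v : v\in V)$ of $\mathbb{C}$ over $S$: one has $\operatorname{Tor}_k^S(S/I,\mathbb{C}) \cong H_k\bigl(K_\bullet \otimes_S S/I\bigr)$, where $K_k$ is free with basis the exterior monomials $\ecal_W$ indexed by $k$-subsets $W \subseteq V$, and $\ecal_W$ carries multidegree $\sum_{v \in W} v$.

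The second and crucial step is to identify the degree-$D$ strand of $K_\bullet \otimes_S S/I$ with the reduced simplicial chain complex of $\Delta_D$. Here I would invoke the defining feature of a lattice ideal: two monomials of $S$ have the same image in $S/I$ precisely when their exponent vectors differ by a principal divisor. Hence each divisor class is represented by at most one monomial of $S/I$, so the graded piece $(S/I)_E$ is one-dimensional when $E$ is linearly equivalent to an effective divisor and zero otherwise. It follows that $\bigl(K_k \otimes_S S/I\bigr)_D = \bigoplus_{|W| = k} (S/I)_{D - \sum_{v\in W} v}\,\ecal_W$ has a basis indexed by those $k$-subsets $W$ for which $D - \sum_{v \in W} v$ is linearly equivalent to an effective divisor — which is exactly the condition defining $W$ to be a face of $\Delta_D$ in Subsection \ref{background} — i.e.\ by the $(k-1)$-faces of $\Delta_D$. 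A direct check then shows that the Koszul differential $K_k \otimes S/I \to K_{k-1} \otimes S/I$ agrees, up to the usual signs, with the simplicial boundary map $\partial_{k-1}$ of $\Delta_D$ under the reindexing that sends a $k$-subset to a $(k-1)$-simplex. Therefore $H_k\bigl(K_\bullet \otimes_S S/I\bigr)_D \cong \Hom_{k-1}(\Delta_D,\mathbb{C})$, the case $k = 0$ being accounted for by the empty face of $\Delta_D$, present exactly when $|D| \neq \emptyset$. Combining this with the first step yields $\beta_{k,D} = \dim \Hom_{k-1}(\Delta_D,\mathbb{C})$.

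The step I expect to be the real content is the structural input in the second paragraph: that $S/I$ is ``squarefree'' in the sense that every graded piece is spanned by a single monomial, so that $K_\bullet \otimes_S S/I$ is genuinely the simplicial chain complex of $\Delta_D$ and not a complex carrying multiplicities. This is precisely where the combinatorics of the sandpile model enters — it amounts, via Dhar's burning algorithm, to the statement that each divisor class has a unique superstable ($G$-parking function) representative. Once that is secured, the rest is bookkeeping: checking that $\Delta_D$ is indeed a simplicial complex (downward closed, since passing to a subset of $W$ only increases $D - \sum_{v\in W} v$), that it depends only on the class of $D$, and that the signs of the Koszul differential reproduce $\partial$.
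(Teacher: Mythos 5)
The paper does not prove this statement; it is imported from \cite{PPW} and \cite{PS}, so there is no internal proof to compare against. Your argument is precisely the standard derivation given in those references: compute $\operatorname{Tor}_k^S(S/I,\mathbb{C})_D$ from the Koszul resolution of $\mathbb{C}$ and identify the degree-$D$ strand of $K_\bullet \otimes_S S/I$ with the augmented chain complex of $\Delta_D$, using that each $\Cl(G)$-graded piece of $S/I$ has dimension at most one. The argument is correct as written. One small correction of emphasis: the one-dimensionality of the graded pieces does not rest on Dhar's burning algorithm or on uniqueness of superstable representatives --- it is immediate from the definition of a lattice ideal, since any two effective divisors in the same class differ by an element of $\img \Lap$ and the corresponding binomial lies in $I$, so all monomials of a given multidegree are already identified in $S/I$. (Uniqueness of the superstable representative is the stronger statement that one can pick a canonical such monomial; it is not needed here.) The only other point worth flagging is the routine one that $\beta_{k,D}=\dim\operatorname{Tor}_k^S(S/I,\mathbb{C})_D$ requires knowing that a minimal $\Cl(G)$-graded free resolution exists; this follows because the $\Cl(G)$-grading is compatible with the standard $\mathbb{Z}$-grading by total degree, for which graded Nakayama applies.
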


The numbers $\beta_{k,D}$ represent refined information regarding the sandpile model. The conjecture is concerned with rougher information called the coarse Betti numbers: $\beta_k = \sum_{D \in \Cl(G)}\beta_{k,D}$. We will often refer to both of these sequences of numbers as Betti numbers. We are almost ready to state the problem, we just need to introduce some notation. We denote by $\Pcal_k$ the set of partitions of $G$ into $k$ connected components. For $\Pi \in \Pcal_k$ we denote by $G_\Pi = (V_\Pi, E_\Pi)$ the graph induced by the partition $\Pi$. The graph $\tG = (\tV, \tE)$ represents the simple graph induced by $G_\Pi$. Often we identify the vertices of $G_\Pi$ or $\tG$ with $\Pi$'s connected components. When we consider a connected component $\pi$ of $\Pi$, it will often be clear from the context if we view it as a vertex of $\tG$ or if we view it as a connected subset of vertices in $G$. Nonetheless, we use $\pi$ to refer to an element of $\tV$ and $\overline{\pi}$ to refer to the corresponding subset of $V$. 

\begin{conjecturess}[Wilmes \cite{W}]
Let $G$ be an undirected, connected graph. Then the $k$-th coarse Betti number associated to the graph $G$ satisfies:
\[ 
\beta_k = \sum_{\Pi \in \Pcal_{k+1}} \# \{G_\Pi \text{-maximal parking functions}\}.
\]
\end{conjecturess}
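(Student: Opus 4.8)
By Hochster's formula it is enough to evaluate, for each class $D\in\Cl(G)$, the number $\beta_{k,D}=\dim\Hom_{k-1}(\Delta_D,\mathbb{C})$, and then to establish
\[
\sum_{D\in\Cl(G)}\dim\Hom_{k-1}(\Delta_D,\mathbb{C})=\sum_{\Pi\in\Pcal_{k+1}}\#\{G_\Pi\text{-maximal parking functions}\}.
\]
The plan is to prove this identity by exhibiting, on both sides, one and the same index set: a family of \emph{boundary divisor classes of level $k$}. Given $\Pi\in\Pcal_{k+1}$ with sink component $\pi_0$ and a $G_\Pi$-parking function $c$ on $\Pi\setminus\{\pi_0\}$, we first attach to $(\Pi,c)$ a divisor $D_{\Pi,c}$ on $G$ supported on the boundary vertices of the parts of $\Pi$ (the $v\in\overline{\pi}$ incident with an edge leaving $\overline{\pi}$), with chips distributed on $\overline{\pi}$ so that, counted across the cut edges, they realise $c(\pi)$. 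A class is a boundary divisor class of level $k$ if it equals $[D_{\Pi,c}]$ for some such $\Pi$ and some \emph{maximal} $c$. Two bookkeeping statements must then be checked: (a) if $c,c'$ are maximal and $(\Pi,c)\neq(\Pi',c')$, then $D_{\Pi,c}\not\sim D_{\Pi',c'}$ on $G$, which we would prove by running Dhar's burning algorithm inside each part so as to recover both the partition and the parking function from the linear equivalence class; and (b) for every boundary divisor class $D$ one has $\dim\Hom_{k-1}(\Delta_D,\mathbb{C})=1$ and $\Hom_j(\Delta_D,\mathbb{C})=0$ for $j\neq k-1$.

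Statement (b) is the geometric core. The claim is that for $D=D_{\Pi,c}$ the complex $\Delta_D$ deformation retracts onto a combinatorial $(k-1)$-sphere built from the $k+1$ parts of $\Pi$. A set of vertices is a face of $\Delta_D$ precisely when $D$ can be made effective off that set by toppling; maximality of $c$ is exactly the assertion that one may successively erase the boundary of any \emph{proper} sub-union of parts of $\Pi$ but cannot erase the boundaries of all parts at once. Hence the natural cover of $\Delta_D$ by the subcomplexes ``in which a prescribed part's boundary is deletable'' has nerve equal to $\partial\Delta^k$ on the vertex set $\{\pi_0,\dots,\pi_k\}$. Each term of this cover, and each intersection of finitely many of its terms, is contractible — a point where the minimality of the chip placement in $D_{\Pi,c}$ enters, since relative to the boundary of a single part the effectivity problem is as rigid as on a tree — so a nerve/Mayer--Vietoris argument yields $\Hom_{k-1}(\Delta_D,\mathbb{C})\cong\mathbb{C}$ with all other reduced homology zero.

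The converse is the step we expect to be the main obstacle: one must show that \emph{every} class $D$ with $\Hom_{k-1}(\Delta_D,\mathbb{C})\neq 0$ is a boundary divisor class of level $k$, and, more precisely, that $\dim\Hom_{k-1}(\Delta_D,\mathbb{C})$ counts the pairs $(\Pi,c)$ as above that represent it — which by (a) is at most one. The proposed mechanism is again Dhar's algorithm: from the reduced representative of $D$ one extracts a canonical partition $\Pi(D)$ of $V$ into connected parts that are the only candidates. The crux is then a dichotomy: if $\Pi(D)$ has at most $k$ parts, or if the configuration that $D$ induces on some part of $\Pi(D)$ is not a maximal parking function for $G_{\Pi(D)}$, then $\Delta_D$ admits a collapse that kills $\Hom_{k-1}$ — because a vertex becomes a cone point, or the intersection pattern of the natural cover fails to be that of $\partial\Delta^k$, so that Mayer--Vietoris produces no $(k-1)$-cycle — whereas if $\Pi(D)\in\Pcal_{k+1}$ with the induced configuration maximal, then $D$ is a boundary divisor class and (b) applies. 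Turning this dichotomy into an exhaustive argument is precisely what is routine for $k=1$, where $\Pi$ has two parts and ``maximal'' just means one chip short of the cut size, but is delicate for larger $k$, since the burning process must be tracked simultaneously across all $k$ non-sink parts and their mutual boundaries.

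Finally the two directions combine: (a) and (b) give an injection $\{(\Pi,c):\Pi\in\Pcal_{k+1},\ c\text{ maximal}\}\hookrightarrow\{[D]:\beta_{k,D}=1\}$, and the converse shows it is a bijection with $\beta_{k,D}=0$ for every class outside its image. Summing $\beta_{k,D}$ over $\Cl(G)$ then gives $\beta_k=\#\{(\Pi,c)\}=\sum_{\Pi\in\Pcal_{k+1}}\#\{G_\Pi\text{-maximal parking functions}\}$, the asserted identity. A sanity check woven through the argument: a maximal $G_\Pi$-parking function has degree $|E_\Pi|-k$, which must equal $\deg D_{\Pi,c}$, while the number $k+1$ of parts of $\Pi$ is what forces the surviving homology of $\Delta_{D_{\Pi,c}}$ to sit in degree $k-1$.
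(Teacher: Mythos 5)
Your proposal is a programme rather than a proof, and two of its load-bearing claims are false as stated. Claim (a) --- that distinct pairs $(\Pi,c)$ with $c$ maximal always yield inequivalent divisors --- already fails for $k=1$: distinct cuts of $G$ can produce linearly equivalent boundary divisors, and the paper's proof of Theorem \ref{thmbeta1} has to confront exactly this by grouping cuts into equivalence classes and showing that a class containing $m$ cuts forces $\beta_{1,D}\geq m$ for the common divisor class. Consequently claim (b) also fails: when several partitions represent the same class $D$, the complex $\Delta_D$ is not a homology $(k-1)$-sphere (for $k=1$ it has more than two connected components), so your nerve cover cannot have nerve $\partial\Delta^k$ and the conclusion $\dim\Hom_{k-1}(\Delta_D,\mathbb{C})=1$ is wrong. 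The correct bookkeeping, which is what the paper isolates as Conjectures \ref{conj BD 1} and \ref{conj BD 2}, is that $\beta_{k,D}$ should equal the \emph{number} of partitions $\Pi\in\Pcal_{k+1}$ with $D\in\B(\Pi)/\img\Lap$; the sum over $\Cl(G)$ then works out by counting fibers of a many-to-one map, not by a bijection onto classes with $\beta_{k,D}=1$. A further gap: your divisor $D_{\Pi,c}$ is not actually constructed --- ``chips distributed on $\overline{\pi}$ so that, counted across the cut edges, they realise $c(\pi)$'' does not determine a divisor, and the paper needs generating sequences of cuts plus the bijection of Proposition \ref{prop1} with acyclic orientations with unique source to make the correspondence between maximal $G_\Pi$-parking functions and $\Pi$-boundary divisor classes precise.

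You should also be aware that the paper does not prove this statement in general: it proves only the case $k=1$ (Theorem \ref{thmbeta1}), handles $k=n-1$ via Wilmes' earlier result, and explicitly leaves the general case as the two conjectures above, citing Mohammadi--Shokrieh and Manjunath--Schreyer--Wilmes for complete proofs by entirely different (Gr\"obner basis / free resolution) methods. The converse direction you flag as ``the main obstacle'' --- that every class with $\Hom_{k-1}(\Delta_D,\mathbb{C})\neq 0$ is a boundary divisor class, with multiplicity --- is indeed where all the difficulty lives, and your sketch of it (a dichotomy producing a collapse) contains no argument that could be checked. In short: the outline points in a reasonable direction, but the uniqueness and sphere claims must be replaced by the multiplicity statement, and the hard converse remains entirely open in your write-up.
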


\noindent For a graph with $n$ vertices it is known that there are $n-1$ non-zero coarse Betti numbers \cite{PPW}. 


In this paper, we present a proof of the conjecture for the first Betti number (Theorem \ref{thmbeta1}), and provide new machinery that appears to be useful for studying higher Betti numbers. In particular, in the next section, we define the boundary divisors for cuts and  we use them to prove the conjecture for the first Betti number. This will justify the study of generalized boundary divisors in section \ref{generalized bd}. In addition, we give some more insight into the $(n-1)$-st case of the problem. Starting with section \ref{extension cycles}, we prepare the necessary tools to prove the relevance of the boundary divisors for multi-edged trees (these are trees in which we allow multiple edges). Namely, we prove a result concerning general Betti numbers $\beta_{k,D}$ in the case where $G$ is a multi-edged tree: 

\begin{propositionss}
If $G$ is a multi-edged tree and $D$ is a $\Pi$-boundary divisor for a partition $\Pi \in \Pcal_{k+1}$, then $\beta_{k,D} > 0$.
\end{propositionss}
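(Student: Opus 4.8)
The plan is to unwind the statement through Hochster's formula and exhibit a $(k-1)$-sphere inside $\Delta_D$ whose fundamental class survives in homology. Since $\beta_{k,D}=\dim\Hom_{k-1}(\Delta_D,\mathbb{C})$, it suffices to show $\Hom_{k-1}(\Delta_D,\mathbb{C})\neq 0$; recall that $\sigma\in\Delta_D$ iff $D-\mathbf{1}_\sigma$ is linearly equivalent to an effective divisor. I would first record the structural features of the tree case. Because $G$ is acyclic, the quotient $\tG$ is a tree on the $k+1$ parts of $\Pi$, so it has exactly $k$ edge-classes $e_1,\dots,e_k$; moreover each $e_i$ is a bundle of $m_i$ parallel edges of $G$ joining a single vertex on one side to a single vertex on the other (two edges between $\overline{\pi}$ and $\overline{\pi'}$ with distinct endpoints, combined with paths inside $\overline{\pi}$ and $\overline{\pi'}$, would produce a cycle in $G$). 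Each $e_i$ cuts $V$ into the two components of $\tG-e_i$. Fixing one part $\pi_*$ as a root, I choose for each part $\pi$ a representative vertex $w_\pi\in\overline{\pi}$: for $\pi\neq\pi_*$ the endpoint in $\overline{\pi}$ of the cut-edge joining $\pi$ to its parent, and for $\pi_*$ any vertex. In these coordinates the $\Pi$-boundary divisor $D$ is (up to the exact bookkeeping of Section~\ref{generalized bd}) a sum of local contributions, one per cut $e_i$, each supported near the endpoints of $e_i$.

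Next I would check that the $k+1$ representatives span a copy of the boundary $\partial\Delta^k$ of the $k$-simplex inside $\Delta_D$: for every proper subset $S\subsetneq\Pi$ the set $\{w_\pi:\pi\in S\}$ is a face, i.e.\ $|D-\sum_{\pi\in S}\mathbf{1}_{w_\pi}|\neq\emptyset$, while $\{w_\pi:\pi\in\Pi\}$ is not a face. The non-face statement should be essentially the defining property of a $\Pi$-boundary divisor (removing one chip from a representative of every part destroys effectivity); the face statements follow because on a tree chips route freely along the unique paths, so the chips that $D$ places near the cuts can be redistributed to cover any $S$ with a part to spare, which I would make precise via Dhar's burning algorithm / reduced divisors with respect to a well-chosen sink. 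This yields a simplicial $(k-1)$-sphere $\Sigma\cong\partial\Delta^k\subseteq\Delta_D$ and hence a $(k-1)$-cycle $z$ (the fundamental class of $\Sigma$; its boundary vanishes in $\Delta_D$ precisely because the top face is missing).

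The remaining, and hardest, step is to show $[z]\neq 0$ in $\Hom_{k-1}(\Delta_D,\mathbb{C})$, i.e.\ that $\Delta_D$ does not fill $\Sigma$ in. I would argue by induction on $k$, the base case $k=1$ being Theorem~\ref{thmbeta1}, where $\Pi$-boundary divisors of cuts are shown to have positive first Betti number. For the inductive step, delete from $G$ a leaf part $\pi_0$ of $\tG$ with $\pi_0\neq\pi_*$; the induced subgraph $G''$ is again a multi-edged tree, the induced partition $\Pi''$ lies in $\Pcal_k$, its quotient is $\tG$ with the leaf removed, and the restriction $D''=D|_{V(G'')}$ is exactly the $\Pi''$-boundary divisor, since the local term attached to the deleted cut (call it $e_1$) drops out. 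By induction $\Hom_{k-2}(\Delta_{D''},\mathbb{C})\neq 0$. I would then prove that $\Delta_D$ is homotopy equivalent to the join $\Delta_{D''}*S^0$, i.e.\ the suspension $\Sigma\Delta_{D''}$, the extra $S^0$ being contributed by $\overline{\pi_0}$ together with the bridge-bundle $e_1$; since $\Hom_{k-1}(\Sigma\Delta_{D''},\mathbb{C})\cong\Hom_{k-2}(\Delta_{D''},\mathbb{C})$, this gives $\Hom_{k-1}(\Delta_D,\mathbb{C})\neq 0$ and identifies $[z]$ with the suspension of the lower class.

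The main obstacle is precisely this suspension (homotopy-join) decomposition. The intuition is that $\overline{\pi_0}$ attaches to the rest of $G$ only through the single parallel bundle $e_1$, so a chip configuration cannot move between $\overline{\pi_0}$ and $V(G'')$ except through $e_1$, and a $\Pi$-boundary divisor places exactly a critical number of chips across $e_1$, decoupling the two sides; faces of $\Delta_D$ should then split as a face on the $V(G'')$-side, a face on the $\overline{\pi_0}$-side, and a binary choice across $e_1$. Turning this into an actual deformation retraction is delicate, chiefly because $\Delta_D$ lives on all of $V(G)$ and carries many faces involving the interior (non-representative) vertices of the parts; I expect the crux to be an explicit acyclic matching (discrete Morse argument) on the face poset showing these extra faces collapse, or contribute only cone-type summands, leaving $\Sigma\Delta_{D''}$. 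Here the tree structure is what makes it tractable: effectivity of $D-\mathbf{1}_\sigma$ reduces to a reduced-divisor computation that localizes along the unique paths of $G$, which should make the matching, and the bridge-decoupling, explicit.
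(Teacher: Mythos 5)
There are two genuine gaps, and the first is fatal to the construction as stated. You claim that for every proper subset $S\subsetneq\Pi$ the set of fixed representatives $\{w_\pi:\pi\in S\}$ is a face of $\Delta_D$, so that the representatives span a copy of $\partial\Delta^k$; the justification offered is that ``on a tree chips route freely along the unique paths.'' This is false for a genuinely multi-edged tree: a bundle of $m>1$ parallel edges obstructs routing, which is exactly why the Jacobian of a multi-edged tree is nontrivial (its order is the product of the edge multiplicities). Concretely, in the paper's Example \ref{example7} the tree has a triple edge between vertices $2$ and $3$, so $1_5\not\sim 1_1$ and $1_6\not\sim 1_1$ (the class of $1_3-1_2$ has order $3$); since $\deg D=4$, the sets $[1,2,4,5]$ and $[1,2,4,6]$ are \emph{not} faces of $\Delta_D$, even though they are codimension-one faces of your would-be simplex on the representatives $\{1,2,4,5,6\}$. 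The paper's extension-cycle machinery of Section \ref{extension cycles} exists precisely to route around this: the cycle it builds is not $\partial\Delta^k$ on $k+1$ fixed vertices but has a base face, extension faces obtained by trading a base vertex $b_j$ for an extension vertex $e_j$ lying in a \emph{different} part $\T^{(m+1)}(j)$ reachable by reorienting a path toward the root, plus the roof faces forced by closing this up into a cycle. Your sphere only exists when all multiplicities are $1$, i.e.\ in the degenerate case of a simple tree.

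The second gap is that the entire non-triviality step is deferred. The paper's mechanism is Lemma \ref{essential lemma}: with respect to a distinguished part $\pi_1$, any chain $\sum c_iA_i$ of top faces whose boundary contains one $\pi_1$-essential face must contain a second one (a sign-counting argument using Corollary \ref{prop2} to restrict the support of the $A_i$), and the constructed cycle contains exactly one $\pi_1$-essential face, hence is not a boundary. Your proposed substitute --- induction on $k$ via a homotopy equivalence $\Delta_D\simeq\Sigma\Delta_{D''}$ after deleting a leaf part --- is asserted, not proved, and you yourself flag it as the crux. Even its setup is shaky: the restriction $D|_{V(G'')}$ equals the $\Pi''$-boundary divisor only for the right choice of sides $X_i$ in equation (\ref{bd}) (if the cut separating $\pi_0$ contributes its chips to the parent's endpoint rather than to $\overline{\pi_0}$, the restriction carries extra chips), and the join decomposition must account for all faces supported on interior vertices of the parts and on the many effective divisors in $|D|$ that are not of the form $\ff(o)$. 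Without either the corrected cycle or a proof of the suspension claim, the argument does not establish $\beta_{k,D}>0$.
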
  

After completing this article, we learned of recent work by Mohammadi and Shokrieh \cite{FMFS}. They describe a minimal Gr\"obner basis for each higher syzygy module of the toppling ideal, giving an alternate proof of Wilmes' conjecture for all $k$. Additional independent work by Manjunath, Schreyer, and Wilmes \cite{MSW}, gives another proof of Wilmes conjecture by constructing minimal free resolutions via toric actions and Gr\"obner degenerations.

\subsection{Background Notions}
\label{background}

As mentioned before, we are concerned with an undirected, connected graph $G = (V,E)$. In general, we denote $\# V  = n$. One of the most important objects associated to a graph is the \emph{Laplacian}, denoted by $\Lap$. Given some ordering of $G$'s vertices, the Laplacian is a $n \times n$ matrix with entries $\{a_{ij}\}_{i,j}$, where $a_{ii}$ is equal to the degree of the $i$-th vertex, while the entries $a_{ij}$ for $i \neq j$ are equal to minus the number of edges connecting the vertices $i$ and $j$. 

We recall that a \emph{divisor} $D$ of $G$ is an element of the free group $\mathbb{Z}V$. In the context of the chip-firing model we denote $\divisors(G) = \mathbb{Z}V$. The group of divisors $\divisors(G)$ modulo the image of the Laplacian represents the class group of $G$, denoted by $\Cl(G)$. Two divisors $D_0$ and $D_1$ are said to be equivalent if they belong to the same class in $\Cl(G)$ and this is denoted by $D_0 \sim D_1$. Since $G$ is connected, the eigenspace corresponding to the eigenvalue $\lambda = 0$ of the Laplacian has dimension equal to one. This immediately implies that if $D_0 \sim D_1$, there exists a unique \emph{script} $\sigma \in \mathbb{Z}V$ such that $\sigma \geq 0$, $\sigma \not > 0 $ and $D_0 -\Lap \sigma = D_1$. The reader might remark that divisors and scripts are essentially the same, they are both elements of the free group $\mathbb{Z}V$. We choose to make a distinction between them because a divisor encodes the number of chips present at each vertex, while a script encodes frequencies of vertex firings.

If $D_0 \in \divisors(G)$, the set $|D_0| = \{D_1 \in \divisors(G)|\text { } D_0 \sim D_1 \text{ and } D_1 \geqslant 0\}$ is called the linear system of $D_0$ and represents the set of non-negative divisors equivalent to $D_0$. A non-negative divisor is sometimes called \textit{effective}. The degree of $D$ is equal to $\sum_{v\in V} D_v$, the sum of its components. The \textit{support} of a divisor $D$ is the set $\supp(D) = \{v\in V|D_v \neq 0\}$. We now turn to the mysterious simplicial complex mentioned above. For a divisor $D \in \divisors(G)$ consider the simplicial complex $\Delta_D = \{W|\text{ } W \subset \supp(D_0) \text{ for some } D_0\in |D| \}$.
 
An important point to be made is that Wilmes' phrased his conjecture in the language of "recurrent configurations". We do not follow this approach here. The interested reader can refer to \cite{PPW}. For our purposes, the notion of a $G$-parking function is more useful. A divisor $D$ is said to be $G$-parking with respect to a sink $s \in V$ if there is no script $\sigma$ such that $s \not \in \supp(\sigma)$ and $D -\Lap \sigma \geq 0$. $G$-parking functions are sometimes called superstable configurations. Let us denote by $1_v$ the divisor $D$ that assigns one chip to the vertex $v$ and zero to the other vertices. By a maximal $G$-parking function, we mean a divisor $D$ that is $G$-parking and has the property that $D + 1_v$ is not $G$-parking $\forall v\in V$.   

\section{The First Betti Number}
In this section, the focus is on proving Wilmes' conjecture for the first Betti number. This represents the starting point for the study of boundary divisors. Before we begin, we remark that it is enough to prove that the first Betti number is equal to the number of distinct cuts of the graph. This is because when $k=1$, each graph $G_\Pi$ is a connected graph with exactly two vertices, and thus admits exactly one maximal parking function. 

\begin{thm}
\label{thmbeta1}
The first Betti number of the minimal free resolution for the toppling ideal is equal to the number of cuts in a graph. 
\end{thm}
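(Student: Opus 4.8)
The plan is to compute $\beta_1$ via Hochster's formula as $\sum_{D \in \Cl(G)} \dim \Hom_0(\Delta_D, \mathbb{C})$, and to show that the only classes $D$ contributing are those corresponding to cuts of $G$, each contributing exactly $1$. Recall that $\dim \Hom_0(\Delta_D,\mathbb{C})$ equals one less than the number of connected components of $\Delta_D$ (when $\Delta_D$ is nonempty), so a class contributes to $\beta_1$ precisely when $\Delta_D$ is disconnected. So the first step is to understand, for a given class, what $\Delta_D$ looks like: its vertex set is $\bigcup_{D_0 \in |D|} \supp(D_0)$, and a set $W$ is a face iff $W \subseteq \supp(D_0)$ for some effective $D_0 \sim D$. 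The key structural observation I would establish is that $\Delta_D$ is disconnected exactly when there is a way to split $V$ into two nonempty parts such that chips can never ``cross'' the cut between them — i.e. when $D$ is (equivalent to) a divisor supported in a way dictated by a cut. Concretely, given a cut, i.e. a partition $V = V_1 \sqcup V_2$ into two vertex sets each inducing a connected subgraph, I would associate the divisor that records, at each vertex, something like the number of edges leaving that vertex across the cut (this is the ``boundary divisor'' the introduction promises to define), and show its linear system has support splitting into the $V_1$-part and the $V_2$-part with no face bridging them.

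The main steps, in order: (1) Set up the correspondence cut $\leftrightarrow$ divisor class. For a cut with sides $V_1, V_2$, define the boundary divisor $D$ by $D_v = \#\{\text{edges from } v \text{ to the other side}\} - 1$ for $v$ on the ``boundary'' appropriately — I expect the precise normalization to come out of requiring that firing all of $V_1$ (equivalently, all of $V_2$) is the unique nontrivial move, so that $|D|$ contains exactly two ``extreme'' effective divisors, one supported in $V_1$ and one in $V_2$. (2) Show $\Delta_D$ is disconnected: any effective $D_0 \sim D$ is obtained from one of the two extremes by a script, and using Dhar's burning algorithm on each side with the far side as sink, show no script produces an effective divisor with support meeting both $V_1$ and $V_2$; hence $\Delta_D$ has (at least) the two components indexed by the two sides, and in fact exactly two, so the class contributes exactly $1$. (3) Show these are the only contributing classes: if $\Delta_D$ is disconnected, pick effective representatives realizing two different components; the ``distance'' between their supports under chip-firing forces a cut — more precisely, the set of vertices reachable by firing from one representative without touching the other's support is a union of connected components of a cut, and running the burning algorithm shows $D$ must be the corresponding boundary divisor. (4) Check the map cut $\mapsto$ class is injective, so distinct cuts give distinct classes, and sum up: $\beta_1 = \#\{\text{cuts}\}$.

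The main obstacle I anticipate is step (3), the converse direction: showing that a disconnected $\Delta_D$ forces $D$ to arise from a cut. The subtlety is that $\Delta_D$ could a priori have more than two components, or its components could fail to be ``connected subgraphs'' in the graph-theoretic sense needed to call the splitting a cut; ruling this out requires a careful argument that the support of any effective representative, together with the chip-firing dynamics, refines to exactly a bipartition with connected sides. I would handle this by analyzing a vertex $v$ in the intersection of the closures of two components and deriving a contradiction via the burning algorithm — roughly, if chips can accumulate at $v$ from configurations on ``both sides,'' then a suitable script bridges the two components. A secondary technical point is verifying that the boundary divisor's linear system has \emph{exactly} two extreme points (giving $\dim \Hom_0 = 1$ rather than something larger); here I would use that each side of a cut is connected, so firing within a side is ``monotone'' in the sense that the burning algorithm terminates with a unique parking representative per side.
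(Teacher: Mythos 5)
Your overall skeleton matches the paper's: compute $\beta_1$ via Hochster's formula, attach to each cut a boundary divisor whose linear system cannot bridge the two sides (your step (2) is the paper's first lemma), and conversely extract a cut from a disconnected $\Delta_D$ by iterating Dhar's algorithm (your step (3) is the paper's ``splitting lemma''). But there is a genuine gap in the counting, and it is not the ``secondary technical point'' you flag: your plan requires each contributing class $D$ to have $\Delta_D$ with \emph{exactly} two components and to correspond to \emph{exactly one} cut, and both claims are false. Distinct cuts can produce equivalent boundary divisors. The cleanest example is a tree on $n$ vertices: every cut removes one edge, its boundary divisor is $1_v$ for an endpoint $v$ of that edge, and on a tree all degree-one effective divisors are linearly equivalent (the degree-zero class group is trivial); so all $n-1$ cuts land in a single class $D$ with $|D|=\{1_v : v\in V\}$, $\Delta_D$ consists of $n$ isolated points, and $\beta_{1,D}=n-1$. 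The theorem holds only because this one class contributes $n-1$, not $1$. So you cannot ``rule out'' extra components --- you must count them. The paper does this in two halves: (a) equivalent cuts cannot intersect (else a boundary divisor of one would be supported on both sides of the other, contradicting the no-bridging lemma), so $k$ mutually equivalent cuts force at least $k+1$ components, giving $\beta_1 \ge \#\Pcal_2$; and (b) for any splitting of the $l+1$ components of $\Delta_D$ into two groups, the splitting lemma produces a cut whose boundary divisor is $D$; the graph on the components whose edges record these cuts is connected, hence has at least $l$ edges, i.e.\ at least $\beta_{1,D}$ distinct cuts per class, giving $\beta_1 \le \#\Pcal_2$. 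Your proposal is missing this entire bookkeeping layer.

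A smaller correction: the normalization $D_v = \#\{\text{edges from } v \text{ across the cut}\} - 1$ is wrong. The boundary divisor is $D_v = \deg_{AB}(v)$ on one side of the cut and $0$ on the other; the two choices of side give equivalent divisors of degree equal to the total number of crossing edges (fire every vertex of one side once to pass between them). With the $-1$ the two sides would not even have equal degree, so they could not be equivalent, and the no-bridging argument would not get off the ground.
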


The main idea of the proof is to construct a correspondence between $\Pi \in \Pcal_2$ and divisors $D$ with $\beta_{1,D} > 0$. More precisely, the argument is twofold. On one hand we show that for certain classes of cuts there are certain boundary divisors with $\beta_{1,D} > 0$ to which they correspond. On the other hand if $\beta_{k,D} = l >0$, we show that $D$ is a boundary divisor and that there are exactly $l$ distinct cuts that produce boundary divisors equivalent to it. We first introduce some concepts necessary for what follows. 

\begin{dfn}
Given $A,B \subset V$ two sets of vertices of $G$, we define the \emph{crossing-degree} of a vertex $v \in V$ with respect to $A$ and $B$ as
\[
\deg_{AB}(v) = \# \{ e \in E \colon e \text{ incident to v and } e \text{ an edge between A and B} \}.
\]
\end{dfn}

\begin{dfn}
Let $\Pi \in \Pcal_2$ and let $A$ and $B$ be its two connected components. A divisor $D$ is a \emph{boundary divisor} with respect to 
$\Pi$ if there exists $X \in \{A,B\}$ such that:

\[
D_v = \deg_{AB}(v)\chi_X(v), \quad \forall v \in V.
\]
\end{dfn}

\noindent Recall that $\chi_X(v)$ is the characteristic function of the set $X$, defined to be $1$ if $v \in X$, and $0$ otherwise. 

\begin{rmk}
Let $A$ and $B$ be as before. If $D(A)$ and $D(B)$ are two boundary divisors such that $D(A)_v = \deg_{AB}(v)\chi_A(v)$ and $D(B)_v = \deg_{AB}(v)\chi_B(v)$, then it is easy to check that $D(A) \sim D(B)$ (to obtain $D(B)$ from $D(A)$, just fire all the vertices in $A$ exactly once). Hence, up to equivalence there is a unique boundary divisor that corresponds to a cut $\Pi$. This will not remain true for boundary divisors with respect to a general partition $\Pi \in \Pcal_k$ (for $k \geq 3$). 
\end{rmk}

\begin{example}
\label{example1}
Let $G$ be a graph with vertices $\{a,b,c,d\}$ as represented in the picture below. The dotted lines represent two cuts $\Pi_1$ and $\Pi_2$, in Figure $1$ and Figure $2$, respectively. The numbers written inside the vertices represent numbers of chips corresponding to the boundary divisors of $G$ with respect to $\Pi_1$ and $\Pi_2$. For each of the two cuts both of its boundary divisors are represented in the figures.

\begin{figure}[ht]
\label{fig1}
\begin{center}

\begin{tikzpicture}
[scale = .3, very thick = 15mm]
  \node(m1) at (2.8,23.3) [test2] {$a$};
  \node(m2) at (0.3,19.3) [test2] {$b$};
  \node(m3) at (7.6,19.2) [test2] {$c$};
  \node(m4) at (2.8,13.3) [test2] {$d$};
    
      \node (n4) at (4,13)  [test] {4};
  \node (n1) at (4,23) [test] {0};
  \node (n2) at (1,18)  [test] {0};
  \node (n3) at (7,18)  [test] {0};
  \foreach \from/\to in {n1/n2,n1/n3, n3/n4,n2/n4}
    \draw[] (\from) -- (\to);
    \path[] (n2) edge [bend left = 15] node {} (n3);
    \path[] (n2) edge [bend right = 15] node {} (n3);
    \path[] (n3) edge [bend left = 15] node {} (n4);
    \path[] (n3) edge [bend right = 15] node {} (n4);
    \draw[dotted] (1,15.5) -- (7,15.5);
    
      \node (n4) at (14,13)  [test] {0};
  \node (n1) at (14,23) [test] {0};
  \node (n2) at (11,18)  [test] {1};
  \node (n3) at (17,18)  [test] {3};
  \foreach \from/\to in {n1/n2,n1/n3, n3/n4,n2/n4}
    \draw[] (\from) -- (\to);
    \path[] (n2) edge [bend left = 15] node {} (n3);
    \path[] (n2) edge [bend right = 15] node {} (n3);
    \path[] (n3) edge [bend left = 15] node {} (n4);
    \path[] (n3) edge [bend right = 15] node {} (n4);
    \draw[dotted] (11,15.5) -- (17,15.5);
\end{tikzpicture}

\caption{The two $\Pi_1$-boundary divisors of the graph $G$.}
\end{center}

\end{figure}
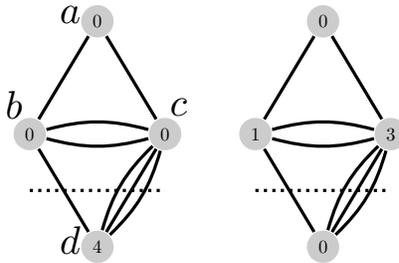

\end{example}

\begin{figure}[H]

\label{fig2}
\begin{center}

\begin{tikzpicture}
[scale = .3, very thick = 15mm]
  
  \node (n4) at (4,1)  [test] {3};
  \node (n1) at (4,11) [test] {0};
  \node (n2) at (1,6)  [test] {3};
  \node (n3) at (7,6)  [test] {0};
  \foreach \from/\to in {n1/n2,n1/n3, n3/n4,n2/n4}
    \draw[] (\from) -- (\to);
    \path[] (n2) edge [bend left = 15] node {} (n3);
    \path[] (n2) edge [bend right = 15] node {} (n3);
    \path[] (n3) edge [bend left = 15] node {} (n4);
    \path[] (n3) edge [bend right = 15] node {} (n4); 
   \draw[dotted] (1.5,11) -- (6.8,1);
   
    \node (n4) at (14,1)  [test] {0};
  \node (n1) at (14,11) [test] {1};
  \node (n2) at (11,6)  [test] {0};
  \node (n3) at (17,6)  [test] {5};
  \foreach \from/\to in {n1/n2,n1/n3, n3/n4,n2/n4}
    \draw[] (\from) -- (\to);
    \path[] (n2) edge [bend left = 15] node {} (n3);
    \path[] (n2) edge [bend right = 15] node {} (n3);
    \path[] (n3) edge [bend left = 15] node {} (n4);
    \path[] (n3) edge [bend right = 15] node {} (n4);
    \draw[dotted] (11.5,11) -- (16.8,1);
   
\end{tikzpicture}

\caption{The two $\Pi_2$-boundary divisors of the graph $G$.}
\end{center}

\end{figure}
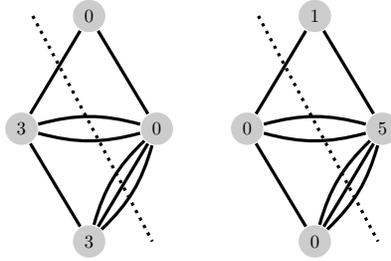

We need to introduce the concept of \emph{neighbouring vertices} of a set $A\subset V$: $\N(A) = \{v \in V \colon \dist(v, A) = 1\}$. 

\begin{lemma}
Let $D$ be one of the boundary divisors of a cut $\Pi$ with connected components $A$ and $B$. Then there is no effective divisor $D_0$ equivalent to $D$ and supported in both $A$ and $B$. 
\end{lemma}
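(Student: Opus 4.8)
The plan is to argue by contradiction, using a strengthened form of the discrete minimum principle. Assume $D = D(B)$ (the case $D = D(A)$ is symmetric, and in any case $D(A) \sim D(B)$), and suppose $D_0 \sim D(B)$ is effective with $D_0(a) \geq 1$ for some $a \in A$ and $D_0(b) \geq 1$ for some $b \in B$. First I would pick $\sigma \in \mathbb{Z}V$ with $D_0 - \Lap\sigma = D(B)$ (possible since $D_0 \sim D(B)$), and set $\sigma' = \sigma + \chi_B$. Using the Remark (firing all of $A$ turns $D(A)$ into $D(B)$, so $\Lap\chi_A = -\Lap\chi_B$ and $D(A) = D(B) - \Lap\chi_B$) one gets $D_0 - \Lap\sigma' = D(A)$. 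Restricting the identity $\Lap\sigma = D_0 - D(B)$ to $A$, where $D(B)$ vanishes, gives $(\Lap\sigma)_a = D_0(a)$ for all $a \in A$; this is nonnegative and, by the choice of $a$, not identically zero. Symmetrically, $(\Lap\sigma')_b = D_0(b)$ for all $b \in B$ is nonnegative and not identically zero.

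The core of the argument is the following fact, which I would establish by a short propagation argument along $G[A]$ (equivalently, by the strict positivity everywhere of the Dirichlet Green's function): if $A \subsetneq V$ with $G[A]$ connected and $\N(A) \neq \emptyset$, and $g \in \mathbb{Z}V$ satisfies $(\Lap g)_a \geq 0$ for every $a \in A$ with strict inequality for at least one $a \in A$, then $g(a) > \min_{v \in \N(A)} g(v)$ for every $a \in A$. Note $\N(A) \neq \emptyset$ here because $G$ is connected and $A, B$ are both nonempty, and all neighbours of a vertex of $A$ lie in $A \cup \N(A)$. Applying this with $g = \sigma$ (whose restricted Laplacian $(\Lap\sigma)|_A = D_0|_A$ is $\geq 0$ and nonzero) yields $\sigma(a) > \min_{\N(A)} \sigma$ for all $a \in A$, hence, by integrality, $\min_A \sigma \geq \min_{\N(A)} \sigma + 1$. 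Running the identical argument on the $B$-side for $\sigma'$ gives $\min_B \sigma' \geq \min_{\N(B)} \sigma' + 1$; since $\sigma' = \sigma$ on $A \supseteq \N(B)$ and $\sigma' = \sigma + 1$ on $B$, this is exactly $\min_B \sigma \geq \min_{\N(B)} \sigma$.

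To finish, I would chain these two inequalities using the trivial inclusions $\N(A) \subseteq B$ and $\N(B) \subseteq A$:
\[
\min_A \sigma \;\geq\; \min_{\N(A)} \sigma + 1 \;\geq\; \min_B \sigma + 1 \;\geq\; \min_{\N(B)} \sigma + 1 \;\geq\; \min_A \sigma + 1,
\]
which is absurd. This contradiction shows no such $D_0$ exists, proving the lemma.

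The step I expect to be the main obstacle — and the reason a naive maximum/minimum principle does not suffice — is securing the \emph{strict} gain in the minimum principle: a single unit of $D_0$ somewhere inside $A$ must force $\sigma$ to exceed its boundary minimum at \emph{every} vertex of $A$, not merely near $\supp(D_0)$, and this is precisely where connectedness of $G[A]$ (positivity of the Green's function throughout $A$) enters. One must also keep careful track of the unit shift $\chi_B$ relating $D(A)$ and $D(B)$: it is exactly this asymmetry that makes the final chain of inequalities close up to a contradiction rather than collapse to a tautology.
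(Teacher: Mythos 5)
Your proof is correct, but it takes a genuinely different route from the paper's. The paper normalizes the script to the canonical representative $\sigma \geq 0$, $\sigma \not> 0$ with $D_0 - \Lap\sigma = D$, and then runs a support-propagation argument: since $D$ vanishes on $B$, one vertex of $B$ firing forces all its $B$-neighbours to fire, hence $B \subset \supp(\sigma)$ by connectedness of $B$; subtracting the $B$-firings and repeating on $A$ gives $A \subset \supp(\sigma)$, contradicting $\sigma \not> 0$. You instead work with an arbitrary (unnormalized) script and package the same local propagation step into a strict discrete minimum principle for superharmonic integer functions on a connected induced subgraph, applying it once on $A$ for $\sigma$ and once on $B$ for the shifted script $\sigma' = \sigma + \chi_B$ (your identity $D(A) = D(B) - \Lap\chi_B$ is exactly the paper's Remark plus $\Lap\mathbf{1} = 0$), and extracting the contradiction from a cyclic chain of inequalities on minima rather than from the normalization. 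Both arguments rest on the same two ingredients --- effectiveness of $D_0$ making $\sigma$ superharmonic where $D$ vanishes, and connectedness of each side of the cut to propagate --- so neither is more general, but yours is normalization-free and symmetric in $A$ and $B$ at the cost of first establishing the strict form of the minimum principle, while the paper's reads the contradiction directly off the chip-firing interpretation of the canonical script.
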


\begin{proof}
We can assume without loss of generality that $D = D(A)$, that is $\supp (D) \subset A$. Suppose there exists an effective divisor $D_0 \sim D$ such that $\supp (D_0) \cap A \neq \emptyset$ and $\supp (D_0) \cap B \neq \emptyset$. There is a unique script $\sigma \geqslant 0$ and $\sigma \not > 0$ such that $D_0 - \Lap \sigma = D$. Recall that $\sigma$ encodes frequencies of vertex firings. By assumption $\supp(D_0) \cap B \neq \emptyset$. Hence $\supp(\sigma) \cap B$ is non-empty because $\supp(D) \cap B$ is empty. Let $v \in \supp(\sigma) \cap B$. Then all vertices in $\N(v) \cap B$ belong to $\supp(\sigma)$, again because $\supp(D) \cap B = \emptyset$. Since $B$ is a connected component we can argue inductively to conclude that $B \subset \supp(\sigma)$. Let $\restr{\sigma}{B}$ be the script that satisfies $(\restr{\sigma}{B})_v = \sigma_v$ when $v \in B$ and $(\restr{\sigma}{B})_v$ is zero otherwise. Also, let $D_1 = D_0 -\Lap \restr{\sigma}{B}$. By the way we defined the boundary divisor $D$ and since $B \subset \supp (\sigma)$, it follows that $(D_1)_v \geq D_v$ for all $v \in A$. Now, because $\sigma = \restr{\sigma}{A} + \restr{\sigma}{B}$, a similar argument with the one above shows that $A \subset \supp(\sigma)$. However, this contradicts $\sigma \not > 0$. This completes the proof of the lemma. 
\end{proof}

\begin{dfn}
Let $\Delta_D$ be the simplicial complex associated to some divisor $D \in \Cl(G)$, with $\beta_{1,D} > 0$. We say that $D_0 \in |D|$ is a \emph{composing divisor} of a connected component $C$ of $\Delta_D$ if $\supp(D_0) \subset C$. 
\end{dfn}

\begin{dfn}

Consider the same situation as in the previous definition. We say that a \emph{splitting} of $D \in \Cl(G)$ is a non-trivial partition of $|D|$ into two disjoint sets $\Acal$ and $\Bcal$, with the additional property that if $C \subset \Delta_D$ is a connected component of the simplicial complex, its composing divisors are contained in the same partition set of the splitting. (composing divisors of different connected components can belong to distinct partition sets)
\end{dfn}

\begin{rmk}
We require $\beta_{1,D} > 0$ because otherwise $D$ would not admit a splitting. This is because the dimension of the zero-th homology group of a simplicial complex is equal to the number of distinct, connected components of the complex minus $1$. 
\end{rmk}

\begin{example}
In order to get a better sense of what a splitting and a composing divisor are, let us consider again the graph $G$ and the partition $\Pi_1$ presented in Figure \ref{fig1}. Also, let $D$ be the $\Pi_1$-boundary divisor that has $4$ chips at the vertex $d$. Then, it can be checked that $|D| = \{0004, 2020, 0130\}$, where the $i$-th digit represents the number of chips on the $i$-th vertex in alphabetical order. This means that the simplicial complex $\Delta_D$ can be represented as follows:

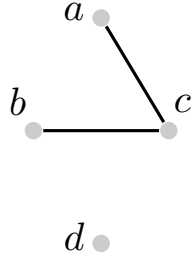
\begin{figure}[H]
\label{fig3}

\begin{center}

\begin{tikzpicture}
[scale = .3, very thick = 15mm]
  
  \node(m1) at (2.8,23.3) [test2] {$a$};
  \node(m2) at (0.3,19.3) [test2] {$b$};
  \node(m3) at (7.6,19.2) [test2] {$c$};
  \node(m4) at (2.8,13.3) [test2] {$d$};
    
      \node (n4) at (4,13)  [test] {};
  \node (n1) at (4,23) [test] {};
  \node (n2) at (1,18)  [test] {};
  \node (n3) at (7,18)  [test] {};
  \foreach \from/\to in {n1/n3, n2/n3}
    \draw[] (\from) -- (\to);
   
\end{tikzpicture}

\caption{The simplicial complex $\Delta_D$ for the $\Pi_1$-boundary divisor $D$.}
\end{center}
\end{figure}

From Figure $3$ it is immediate that $\Delta_D$ has $2$ connected components: $\{a,b,c\}$ and $\{d\}$. The composing divisors of $\{a,b,c\}$ are $2020$ and $0130$, and the only possible splitting of $D$ is $|D| = \{2020,0130\}\sqcup\{0004\}$. The divisor $0004$ is the only composing divisor of $\{d\}$.
\end{example}

\begin{lemma}
\label{splitting lemma}
Let $D \in \Cl(G)$ with $\beta_{1,D} > 0$. Then, for any splitting $|D| = \Acal \sqcup \Bcal$, there exist $D_A \in \Acal$ and $D_B \in \Bcal$ such that the set of edges between $S_A = \supp(D_A)$ and $S_B = \supp(D_B)$ represents a cut of the graph. Moreover, the boundary divisors generated by this cut are equal to $D$ in $\Cl(G)$.  
\end{lemma}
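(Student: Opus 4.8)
The plan is to pick an economical pair $D_A\in\Acal$, $D_B\in\Bcal$, show that the script carrying $D_A$ to $D_B$ is the firing of a single \emph{connected} vertex set $U$, and then read off that $(U,V\setminus U)$ is the desired cut with $D_A$, $D_B$ as its two boundary divisors. The basic structural remark is that a face of a simplicial complex is connected, so $\supp(D_0)$ lies inside a single connected component of $\Delta_D$ for every $D_0\in|D|$; thus each element of $|D|$ is a composing divisor and the splitting is nothing but a $2$-colouring of the connected components of $\Delta_D$. Consequently, for $D_A\in\Acal$ and $D_B\in\Bcal$ the supports $\supp(D_A)$, $\supp(D_B)$ lie in distinct components of $\Delta_D$ and hence are disjoint. (If $0\in|D|$ then $\beta_{1,D}=0$, so this is excluded; also $|D|$ is finite, so the minimum chosen below is attained.)

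\textbf{Choosing the pair.} Among all pairs $(D_A,D_B)\in\Acal\times\Bcal$ choose one minimizing $\sum_{v\in V}\sigma_v$, where $\sigma$ is the unique script with $D_A-\Lap\sigma=D_B$, $\sigma\geq 0$, $\sigma\not>0$. Put $U=\supp(\sigma)$ and $W=V\setminus U$; since $\sigma\not>0$ and $\sigma\neq 0$ we get $\emptyset\neq U\subsetneq V$. A one-line computation gives $(D_B)_v\geq(D_A)_v$ for $v\in W$, so $(D_A)_v>0$ would force $v\in\supp(D_A)\cap\supp(D_B)$; hence $\supp(D_A)\subseteq U$.

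\textbf{The key step: $\sigma$ is $\{0,1\}$-valued.} Suppose $m:=\max_v\sigma_v\geq 2$ and let $U_m=\{v:\sigma_v=m\}$. The crucial point is that $D_A'':=D_A-\Lap\chi_{U_m}$ is again effective: for $v\notin U_m$ this is immediate, and for $v\in U_m$ each neighbour $u$ of $v$ lying outside $U_m$ contributes $m-\sigma_u\geq 1$ to $(\Lap\sigma)_v$, so $(\Lap\sigma)_v$ is at least the number of edges from $v$ leaving $U_m$, which is exactly $(\Lap\chi_{U_m})_v$; thus $(D_A)_v=(D_B)_v+(\Lap\sigma)_v\geq(\Lap\chi_{U_m})_v$. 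Now $D_A''\in|D|$ and $D_A''-\Lap(\sigma-\chi_{U_m})=D_B$. If $D_A''\in\Acal$ this contradicts minimality of $\sum_v\sigma_v$; if $D_A''\in\Bcal$ then $(D_A,D_A'')$ has connecting script $\chi_{U_m}$ of total $|U_m|\leq\frac1m\sum_v\sigma_v<\sum_v\sigma_v$, again a contradiction. Hence $\sigma=\chi_U$. Using $D_B=D_A-\Lap\chi_U$ together with the disjointness of the two supports, one then gets $\supp(D_B)\subseteq W$ and, moreover, every edge joining $U$ to $W$ runs between $S_A:=\supp(D_A)$ and $S_B:=\supp(D_B)$.

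\textbf{Finishing: $(U,W)$ is a cut.} It remains to see that $G[U]$ and $G[W]$ induce connected subgraphs; granting this, $(D_A)_v=\deg_{UW}(v)$ for $v\in U$ and $(D_A)_v=0$ for $v\in W$, so $D_A$ is precisely the $U$-side boundary divisor of the cut $(U,W)$ and $D_B$ the $W$-side one, which (since $D_A,D_B\in|D|$) proves the ``moreover''. If $G[U]$ had a proper component $U_1$, then $D_A-\Lap\chi_{U_1}$ is effective (its coordinates on $U_1$ equal those of $D_B$), and, as in the key step, either this divisor paired with $D_B$, or $D_A$ paired with it, contradicts minimality. If $G[W]$ had a proper component $W_1$, then $D_B':=D_B-\Lap\chi_{W_1}\in|D|$ is effective with $\supp(D_B')=(S_A\cap\N(W_1))\cup(S_B\cap(W\setminus W_1))$; both pieces are non-empty (connectedness of $G$ forces an edge out of $W_1$ into $S_A$, and any other component of $G[W]$ meets $S_B$), so $\supp(D_B')$ contains vertices from two distinct components of $\Delta_D$, which is impossible for a single face. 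I expect the $\{0,1\}$-reduction to be the only real obstacle: the rest is Laplacian bookkeeping, but the correct set to fire there ($U_m=\{\sigma=\max\sigma\}$, not $U$ itself nor $\{\sigma\geq 2\}$) is exactly what makes effectivity work out and what ultimately pins down the cut.
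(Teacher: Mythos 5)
Your argument is correct, and it takes a genuinely different route from the paper. The paper fixes a sink outside the support of the connecting script, passes to the unique equivalent $G$-parking function, and runs Dhar's burning algorithm repeatedly to produce a sequence of effective divisors interpolating from $\Acal$ to $\Bcal$; at the first crossing $D_t\in\Acal$, $D_{t+1}\in\Bcal$ it shows (using that one burning step moves chips at most distance one) that the edges between $\supp(D_t)$ and $\supp(D_{t+1})$ form a cut whose boundary divisors are $D_t$ and $D_{t+1}$. You instead run a purely Laplacian extremal argument: minimize the total firing number over pairs in $\Acal\times\Bcal$, use the standard top-level-set firing $\chi_{U_m}$ (checking effectivity of $D_A-\Lap\chi_{U_m}$, which is indeed the one place where the choice of set matters) to force the minimizing script to be $\chi_U$, and then rule out disconnectedness of $G[U]$ and $G[W]$ by further firings that either shorten the script or produce a face of $\Delta_D$ meeting two components. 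Your approach is more self-contained --- it needs neither $G$-parking functions nor Dhar's algorithm --- and it makes the connectivity of both sides of the cut fully explicit, a point the paper treats somewhat briskly (``it is easy to see that $E_t$ is a cut''); the paper's approach, in exchange, produces a canonical interpolating chain of divisors inside $|D|$, which is in the spirit of the rest of the article. Both proofs correctly deliver the ``moreover'' clause by identifying $D_A$ and $D_B$ with the two boundary divisors of the resulting cut.
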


\begin{proof}
We can assume $D \in \Bcal$ and we choose $D_0 \in \Acal$. They are equivalent so there exists a unique script $\sigma \geq 0$, such that $\sigma \ngtr 0$ and $D_0 - \Lap \sigma = D$. Pick $s \in V \setminus \supp(\sigma)$ and set it to be the sink. Then in the new reduced setting $D_0$ and $D$ are still equivalent. Moreover, there is a unique $G$-parking function equivalent to both of them. Let $D_G$ be that $G$-parking function. We consider $D_G$ with the appropriate number of chips on the sink so that it is equivalent to $D_0$ and $D$ in the unreduced setting. Without loss of generality suppose that $D_G \in \Bcal$.

We now apply repeatedly Dhar's burning algorithm. Let $D_k$ be the divisor obtained after applying the algorithm $k$ times to $D_0$, and let $S_k$ denote its support. Let $t \geq 0$ such that $D_t \in \Acal$ and $D_{t+1} \in \Bcal$ (there is such a $t$ since $D_0 \in \Acal$ and $D_G \in \Bcal$). Since $D_t$ and $D_{t+1}$ belong to different connected components of $\Delta_D$, we have that $S_t \cap S_{t+1} = \emptyset$. Furthermore, we get that $S_{t} \subset \N(S_{t+1})$ and $S_{t+1} \subset \N(S_t)$ because the burning algorithm at one iteration can move a chip at most distance one. Let $E_t$ be the set of edges between $S_t$ and $S_{t+1}$ (it is clearly non-empty by the previous remark).

We now notice that removing $E_t$ disconnects the graph. When applying Dhar's algorithm for the $(t+1)$-st time, all vertices in $S_t$ remain unburned. This is needed since $S_t$ and $S_{t+1}$ are disjoint. If the graph were still connected after removing $E_t$, then we could consider a shortest path from $s$ to some vertex of $S_t$ that avoids the vertices in $S_{t+1}$. This vertex will have an incident burned edge and it will hence, after firing, send some chips to a vertex that is not in $S_{t+1}$. We thus obtain a contradiction, so the graph disconnects when removing $E_t$.

As we noticed that all the vertices in $S_t$ have to be unburned, it is easy to see that all vertices in $S_{t+1}$ have to burn. Hence, if $v \in S_t$ has $d_v$ incident edges going to $S_{t+1}$, then $(D_t)_v = d_v$. Suppose there exists a connected component of the graph after removing $E_t$ such that it contains only a subset $S \subsetneq S_t$, but not the entire $S_t$. Then, by firing every vertex in this connected component exactly once we send some chips from $S_t$ to $S_{t+1}$, but not all of them. We thus obtain a divisor that is connected in $\Delta_D$ to both $S_t$ and $S_{t+1}$. This is a contradiction, hence all the vertices of $S_t$ are contained in the same connected component. An analogous argument shows that $S_{t+1}$ is contained in one connected component, and now it is easy to see that $E_t$ is a cut. The arguments above prove the second part of the lemma as well. The proof of the lemma is complete.
\end{proof}

One more notion is needed in the proof of Theorem \ref{thmbeta1}. We say two cuts, $\Pi_1$ and $\Pi_2$, \emph{intersect} if we cannot label their connected components $A_1$, $B_1$ and $A_2$, $B_2$ such that $A_1 \subset A_2$ and $B_2 \subset B_1$.

\begin{proof}[Proof of Theorem \ref{thmbeta1}]
The idea of the proof consists in finding a correspondence between cuts of the graph and divisors $D \ge 0$ with $\beta_{1,D} > 0$. This correspondence is provided by the two lemmas presented above. 

We first prove that $\beta_1 \geq \# \Pcal_2$. We call two cuts equivalent if their boundary divisors are equivalent. Let $\Pi_1$ and $\Pi_2$ be two different equivalent cuts. The first thing to notice is that these cuts cannot intersect themselves. If this were not true, we would get that each of the boundary divisors of one of the cuts is supported on both sides of the other cut. But this is in contradiction with the first lemma. We now group the cuts of the graph into equivalence classes. Suppose an equivalence class contains $k$ cuts. Then, by applying lemma \ref{prop2} and the non-intersecting property of the cuts, we get that the simplicial complex of a boundary divisor for one of the cuts has at least $k+1$ connected components. Hence, $\beta_{1,D}$ for this divisor is at least $k$. We thus obtain that $\beta_1 \geq \# \Pcal_2$.

Finally, we are left to prove that $\beta_1 \leq \# \Pcal_2$. Take $D\in \Cl(G)$ with $\beta_{1,D}  = l > 0$ and let $|D| = C_1 \sqcup C_2 \sqcup ... \sqcup C_{l+1}$, where $C_i$ represents the set of divisors composing the $i$-th connected component of $\Delta_D$. We represent graphically the $l+1$ connected components by one vertex each and we connect $C_i$ to $C_j$ ($i\neq j$) by an edge if there exist $D_0 \in C_i$ and $D_1 \in C_j$ a pair of equivalent boundary divisors. By lemma \ref{splitting lemma} we get that the graph formed in this way is connected. Hence, it has at least $l$ edges. Lemma \ref{splitting lemma} also guarantees that distinct elements of $\Cl(G)$ will produce distinct cuts, we then obtain that $\beta_1 \leq \# \Pcal_2$. The proof of the theorem is complete.  
\end{proof}

\begin{example}
We now illustrate in a particular case the results obtained above.  We consider the graph $G$ introduced in Example \ref{example1}. The first Betti number for this graph can be computed (with the sandpile package \cite{Per} of the mathematics software Sage \cite{SG}, for example) to be $\beta_1 = 6$. By inspection, it can be seen that the graph $G$ admits exactly six cuts. The cuts are presented in Figure $4$, together with their corresponding boundary divisors. Again by inspection or through the use of the software Sage, the reader can check that for each of these boundary divisors $D$ the first Betti number $\beta_{1,D}$ is equal to one, and that there are no two equivalent boundary divisors produced by distinct cuts (in general, this need not be the case). The simplicial complexes of the boundary divisors are represented in Figure $5$.

\begin{figure}[ht]

\label{figure thm1}
\begin{center}

\begin{tikzpicture}
[scale = .22, very thick = 15mm]
  
  \node (n4) at (4,1)  [test] {3};
  \node (n1) at (4,11) [test] {0};
  \node (n2) at (1,6)  [test] {3};
  \node (n3) at (7,6)  [test] {0};
  \foreach \from/\to in {n1/n2,n1/n3, n3/n4,n2/n4}
    \draw[] (\from) -- (\to);
    \path[] (n2) edge [bend left = 15] node {} (n3);
    \path[] (n2) edge [bend right = 15] node {} (n3);
    \path[] (n3) edge [bend left = 15] node {} (n4);
    \path[] (n3) edge [bend right = 15] node {} (n4); 
   \draw[dotted] (1.5,11) -- (6.8,1);
   
    \node (n4) at (14,1)  [test] {0};
  \node (n1) at (14,11) [test] {1};
  \node (n2) at (11,6)  [test] {3};
  \node (n3) at (17,6)  [test] {0};
  \foreach \from/\to in {n1/n2,n1/n3, n3/n4,n2/n4}
    \draw[] (\from) -- (\to);
    \path[] (n2) edge [bend left = 15] node {} (n3);
    \path[] (n2) edge [bend right = 15] node {} (n3);
    \path[] (n3) edge [bend left = 15] node {} (n4);
    \path[] (n3) edge [bend right = 15] node {} (n4);
    \draw[dotted] (11.5,1) -- (16.8,11);

    \node (n4) at (24,1)  [test] {4};
  \node (n1) at (24,11) [test] {0};
  \node (n2) at (21,6)  [test] {0};
  \node (n3) at (27,6)  [test] {0};
  \foreach \from/\to in {n1/n2,n1/n3, n3/n4,n2/n4}
    \draw[] (\from) -- (\to);
    \path[] (n2) edge [bend left = 15] node {} (n3);
    \path[] (n2) edge [bend right = 15] node {} (n3);
    \path[] (n3) edge [bend left = 15] node {} (n4);
    \path[] (n3) edge [bend right = 15] node {} (n4);
    \draw[dotted] (21,3) -- (27,3);
   
\end{tikzpicture}

\bigskip

\begin{tikzpicture}
[scale = .22, very thick = 15mm]

  \node (n4) at (4,1)  [test] {0};
  \node (n1) at (4,11) [test] {2};
  \node (n2) at (1,6)  [test] {0};
  \node (n3) at (7,6)  [test] {0};
  \foreach \from/\to in {n1/n2,n1/n3, n3/n4,n2/n4}
    \draw[] (\from) -- (\to);
    \path[] (n2) edge [bend left = 15] node {} (n3);
    \path[] (n2) edge [bend right = 15] node {} (n3);
    \path[] (n3) edge [bend left = 15] node {} (n4);
    \path[] (n3) edge [bend right = 15] node {} (n4); 
   \draw[dotted] (1,8.5) -- (7,8.5);
   
    \node (n4) at (14,1)  [test] {0};
  \node (n1) at (14,11) [test] {0};
  \node (n2) at (11,6)  [test] {4};
  \node (n3) at (17,6)  [test] {0};
  \foreach \from/\to in {n1/n2,n1/n3, n3/n4,n2/n4}
    \draw[] (\from) -- (\to);
    \path[] (n2) edge [bend left = 15] node {} (n3);
    \path[] (n2) edge [bend right = 15] node {} (n3);
    \path[] (n3) edge [bend left = 15] node {} (n4);
    \path[] (n3) edge [bend right = 15] node {} (n4);
    \draw[dotted] (12.5,1) -- (12.5,11);
    
    \node (n4) at (24,1)  [test] {0};
  \node (n1) at (24,11) [test] {0};
  \node (n2) at (21,6)  [test] {0};
  \node (n3) at (27,6)  [test] {6};
  \foreach \from/\to in {n1/n2,n1/n3, n3/n4,n2/n4}
    \draw[] (\from) -- (\to);
    \path[] (n2) edge [bend left = 15] node {} (n3);
    \path[] (n2) edge [bend right = 15] node {} (n3);
    \path[] (n3) edge [bend left = 15] node {} (n4);
    \path[] (n3) edge [bend right = 15] node {} (n4);
    \draw[dotted] (25.5,1) -- (25.5, 11);
   
\end{tikzpicture}

\caption{The six cuts of the graph $G$ and the boundary divisors they produce in $\Cl(G)$.}
\end{center}

\end{figure}
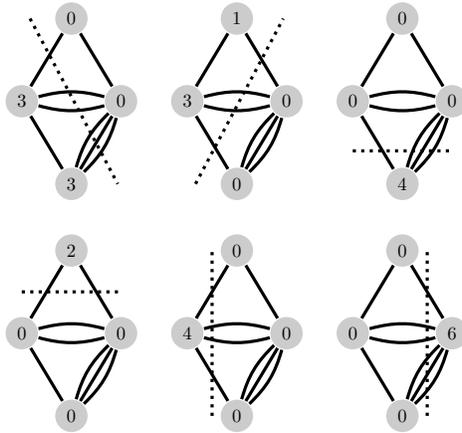

\begin{figure}[H]

\label{figure2 thm1}
\begin{center}

\begin{tikzpicture}
[scale = .22, very thick = 15mm]
 
  \node (n4) at (4,1)  [test] {};
  \node (n1) at (4,11) [test] {};
  \node (n2) at (1,6)  [test] {};
  \node (n3) at (7,6)  [test] {};
  \foreach \from/\to in {n1/n3, n2/n4}
    \draw[] (\from) -- (\to);
   
    \node (n4) at (14,1)  [test] {};
  \node (n1) at (14,11) [test] {};
  \node (n2) at (11,6)  [test] {};
  \node (n3) at (17,6)  [test] {};
  \foreach \from/\to in {n1/n2, n3/n4}
    \draw[] (\from) -- (\to);
    
    \node (n4) at (24,1)  [test] {};
  \node (n1) at (24,11) [test] {};
  \node (n2) at (21,6)  [test] {};
  \node (n3) at (27,6)  [test] {};
  \foreach \from/\to in {n3/n2,n1/n3}
    \draw[] (\from) -- (\to);

\end{tikzpicture}

\bigskip

\begin{tikzpicture}
[scale = .22, very thick = 15mm]
  
  \node (n1) at (4,11) [test] {};
  \node (n2) at (1,6)  [test] {};
  \node (n3) at (7,6)  [test] {};
  \foreach \from/\to in {n3/n2}
    \draw[] (\from) -- (\to);
   
   \fill[black!10] (14,11) -- (14,1) -- (17,6) -- cycle;
   
    \node (n4) at (14,1)  [test] {};
  \node (n1) at (14,11) [test] {};
  \node (n2) at (11,6)  [test] {};
  \node (n3) at (17,6)  [test] {};
  \foreach \from/\to in {n1/n3, n3/n4, n1/n4}
    \draw[] (\from) -- (\to);
    
    \fill[black!10] (24,11) -- (24,1) -- (21,6) -- cycle;
    
    \node (n4) at (24,1)  [test] {};
  \node (n1) at (24,11) [test] {};
  \node (n2) at (21,6)  [test] {};
  \node (n3) at (27,6)  [test] {};
  \foreach \from/\to in {n1/n2,n2/n4, n1/n4}
    \draw[] (\from) -- (\to);

\end{tikzpicture}

\caption{The six simplicial complexes of the boundary divisors presented in Figure $4$.}
\end{center}

\end{figure}
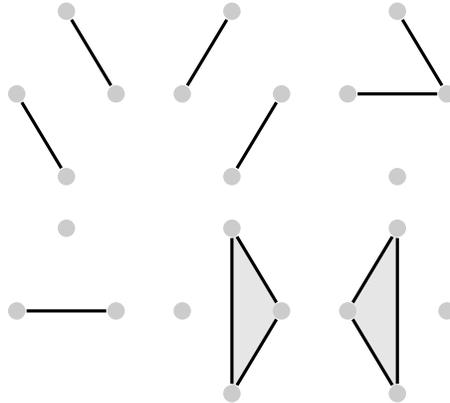

\end{example}
\section{Boundary divisors}
\label{generalized bd}

This section introduces the notion of boundary divisors for a partition $\Pi \in \Pcal_{k+1}$. These special divisors have nice properties that make them a promising way to tackle Wilmes' conjecture as we saw in the last section. Moreover, at the end of this section we will see how boundary divisors describe the structure behind the last Betti number. 

Before we define the boundary divisors we need to introduce some notions. For $k \geqslant 1$, we say that a \emph{generating sequence} of $\Pi \in \Pcal_{k+1}$ is a sequence $\{C_i\}_{i = 1}^k$ of cuts, where $C_1$ is a cut of $G$ and $C_i$, for $i \geqslant 2$, is a cut for one of the connected components produced by $C_{i-1}$, such that when applied sequentially the sequence of cuts produces the partition $\Pi$. One can easily convince himself that any partition $\Pi \in \Pcal_{k+1}$ admits at least one generating sequence (one just needs to check that for any graph $\tG$ there exits a vertex $v \in \tV$ such that $\tG \setminus v$ is connected). In general, if $\{C_i\}_{i = 1}^k$ is a generating sequence for some $\Pi \in \Pcal_{k+1}$, we denote by $A_i$ and $B_i$ the two connected components produced by $C_i$.

\begin{example}
\label{example3}
Let us consider again the graph $G$ introduced in Example \ref{example1} and consider the partition $\Pi$ with $3$ connected components: $\{a\}$, $\{c\}$ and $\{b,d\}$. This partition has three generating sequences, which can be seen in Figure $6$.

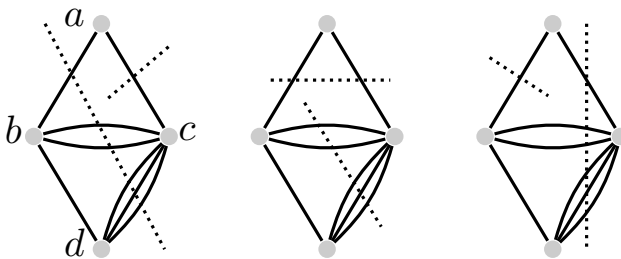
\begin{figure}[ht]
\label{figure4}
\begin{center}

\begin{tikzpicture}
[scale = .3, very thick = 15mm]
  
  \node(m1) at (2.8,11.3) [test2] {$a$};
  \node(m2) at (0.1,6.3) [test2] {$b$};
  \node(m3) at (7.8,6.2) [test2] {$c$};
  \node(m4) at (2.8,1.3) [test2] {$d$};
  
   \node (n4) at (4,1)  [test] { };
  \node (n1) at (4,11) [test] {};
  \node (n2) at (1,6)  [test] {};
  \node (n3) at (7,6)  [test] {};
  \foreach \from/\to in {n1/n2,n1/n3, n3/n4,n2/n4}
    \draw[] (\from) -- (\to);
    \path[] (n2) edge [bend left = 15] node {} (n3);
    \path[] (n2) edge [bend right = 15] node {} (n3);
    \path[] (n3) edge [bend left = 15] node {} (n4);
    \path[] (n3) edge [bend right = 15] node {} (n4); 
   \draw[dotted] (1.5,11) -- (6.8,1);
   \draw[dotted] (4.3,7.7) -- (7,10);
   
    \node (n4) at (14,1)  [test] {};
  \node (n1) at (14,11) [test] {};
  \node (n2) at (11,6)  [test] {};
  \node (n3) at (17,6)  [test] {};
  \foreach \from/\to in {n1/n2,n1/n3, n3/n4,n2/n4}
    \draw[] (\from) -- (\to);
    \path[] (n2) edge [bend left = 15] node {} (n3);
    \path[] (n2) edge [bend right = 15] node {} (n3);
    \path[] (n3) edge [bend left = 15] node {} (n4);
    \path[] (n3) edge [bend right = 15] node {} (n4);
    \draw[dotted] (11.5,8.5) -- (16.8,8.5);
    \draw[dotted] (13,7.5) -- (16.4, 2);
    
     \node (n4) at (24,1)  [test] {};
  \node (n1) at (24,11) [test] {};
  \node (n2) at (21,6)  [test] {};
  \node (n3) at (27,6)  [test] {};
  \foreach \from/\to in {n1/n2,n1/n3, n3/n4,n2/n4}
    \draw[] (\from) -- (\to);
    \path[] (n2) edge [bend left = 15] node {} (n3);
    \path[] (n2) edge [bend right = 15] node {} (n3);
    \path[] (n3) edge [bend left = 15] node {} (n4);
    \path[] (n3) edge [bend right = 15] node {} (n4);
    \draw[dotted] (25.5,11) -- (25.5,1);
    \draw[dotted] (21.2,9.5) -- (23.8, 7.8);
    
\end{tikzpicture}

\caption{The three generating sequences of the partition $\Pi$.}
\end{center}

\end{figure}

\end{example}

In this section we make the convention that $k$ is an integer between $1$ and  $n-1$ ($n = \# V$ as before) and that $\Pi$ is an element of $\Pcal_{k+1}$. We are now ready to define the boundary divisors.

\begin{dfn}
A divisor $D \in \divisors(G)$ is a \emph{boundary divisor} if there exists a generating sequence $\{C_i\}_{i=1}^k$ for some $\Pi \in \Pcal_{k+1}$, and if there exists a choice of $X_i \in \{A_i, B_i\}$ such that:
\begin{equation}
\label{bd}
D_v = \sum_{i = 1}^k \deg_{A_iB_i}(v)\chi_{X_i}(v), \quad \forall v \in V.
\end{equation} 
A $\Pi$-boundary divisor is a divisor that is boundary with respect to $\Pi$.
\end{dfn}

From this definition, the reader can easily see that a $\Pi$-boundary divisor is supported on the "boundaries" of $\Pi$'s connected components.

Before we arrive at the first important result concerning boundary divisors we have to establish a correspondence between $\Pi$-boundary divisors and orientations of the graph $\tG$. This relation will allow us to easily study equivalence relations concerning boundary divisors. We begin by introducing a map $\ff \colon \Oo(\tG) \rightarrow \divisors(\tG)$ from the set of orientations of $\tG$ to the set of divisors on $G$. For $o \in \Oo (\tG)$ the map $\ff$ is defined as follows:
\[
(\ff(o))_v = \sum_{ \substack{ e \in \tE(o)\\
                               v \in \overline{e^+}}} 
\deg_{\overline{e^-} \text{ } \overline{e^+}}(v) \text{, for } v \in V.
\]

\begin{example}
\label{example4}
We consider the partition $\Pi$ and take its first generating sequence from Example \ref{example3}. With respect to this partition the divisor $D$ presented in Figure $7$ is boundary. We also look at at the graph $\tG$ and the orientation $o$ that can be seen in the picture below. Computing $\ff (o)$, one can see that $\ff (o) = D$.

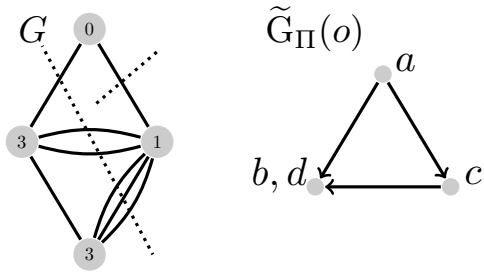
\begin{figure}[ht]
\label{figure5}
\begin{center}

\begin{tikzpicture}
[scale = .3, very thick = 15mm]
  
   \node (n4) at (4,1)  [test] {3};
  \node (n1) at (4,11) [test] {0};
  \node (n2) at (1,6)  [test] {3};
  \node (n3) at (7,6)  [test] {1};
  \foreach \from/\to in {n1/n2,n1/n3, n3/n4,n2/n4}
    \draw[] (\from) -- (\to);
    \path[] (n2) edge [bend left = 15] node {} (n3);
    \path[] (n2) edge [bend right = 15] node {} (n3);
    \path[] (n3) edge [bend left = 15] node {} (n4);
    \path[] (n3) edge [bend right = 15] node {} (n4); 
   \draw[dotted] (1.5,11) -- (6.8,1);
   \draw[dotted] (4.3,7.7) -- (7,10);
   
   \node (m2) at (1.5,11) [test3] {$G$};
    \node (m1) at (14,11) [test3] {$\tG (o)$};
    \node (m3) at (18,9.5) [test2] {$a$};
    \node (m4) at (12.4,4.5) [test2] {$b,d$};
    \node (m5) at (21,4.5) [test2] {$c$};
  \node (n1) at (17,9) [test] {};
  \node (n2) at (14,4)  [test] {};
  \node (n3) at (20,4)  [test] {};

    \path[<-] (n2) edge  node {} (n3);
    \path[<-] (n3) edge  node {} (n1);
    \path[<-] (n2) edge  node {} (n1);

\end{tikzpicture}

\caption{A $\Pi$-boundary divisor $D$ of $G$ and the graph $\tG$ with an orientation $o$ such that $\ff (o) = D$.}
\end{center}
\end{figure}
\end{example}

It is not a coincidence that in the previous example $\ff(o)$ equals the boundary divisor $D$. We now study a sufficient condition for an orientation of $\tG$ to be mapped by $\ff$ to a boundary divisor. Fix a vertex $s \in \tV$ to be the sink of the sandpile model on $\tG$ and the source of the acyclic orientations with unique source on $\tG$. Recall that $s$ can be viewed at the same time as a vertex of $\tG$ and as a connected subset of vertices of $G$, and that when we refer to it as the latter, we denote it by $\overline{s}$. We denote the set of acyclic orientations with unique source $s$ by $\AUS$. We check that the restriction $\restr{\ff}{\AUS}$ is a map from $\AUS$ to $\B(\Pi)$, the set of $\Pi$-boundary divisors on $G$. Let $o \in \AUS$, we denote by $\tG(o) = (\tV(o), \tE(o))$ the graph $\tG$ with orientation $o$. 

\begin{lemma}
The restriction $\restr{\ff}{\AUS}$ is a map from $\AUS$ to $\B(\Pi)$.
\end{lemma}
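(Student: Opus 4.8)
The plan is to show that for any acyclic orientation $o \in \AUS$, the divisor $\ff(o)$ is a $\Pi$-boundary divisor by exhibiting a generating sequence $\{C_i\}_{i=1}^k$ of $\Pi$ and a choice of sides $X_i \in \{A_i, B_i\}$ realizing the defining equation~\eqref{bd}. The key structural fact I would use is that an acyclic orientation of a connected graph admits a topological order of its vertices $s = \pi_1, \pi_2, \dots, \pi_{k+1}$ (with $s$ the unique source, hence first) such that every directed edge goes from an earlier vertex to a later one. Having fixed such an order, I would build the generating sequence by peeling off vertices from the \emph{end}: let $C_1$ be the cut of $G$ separating $\overline{\pi_{k+1}}$ from $\overline{\pi_1} \cup \cdots \cup \overline{\pi_k}$; then $C_2$ the cut of the remaining component separating $\overline{\pi_k}$ from $\overline{\pi_1}\cup\cdots\cup\overline{\pi_{k-1}}$; and so on. The first thing to verify is that this is a legitimate generating sequence, i.e.\ that at each stage $\{\overline{\pi_j}\}$ and $\overline{\pi_1}\cup\cdots\cup\overline{\pi_{j-1}}$ are each connected subsets of the current component. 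Connectedness of each $\overline{\pi_j}$ is automatic since they are connected components of $\Pi$; connectedness of the union $\overline{\pi_1}\cup\cdots\cup\overline{\pi_{j-1}}$ is precisely where I expect to actually need something, and here the acyclicity of $o$ together with the topological order should force it: if that union were disconnected in $\tG$, some later vertex would have to bridge two of its pieces, and tracing the orientations of the bridging edges against the topological order would be the mechanism (this is the analogue of the standard fact that one can always delete a sink from an acyclic graph and stay connected, which the paper already invoked for the existence of generating sequences).

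With the generating sequence in hand, for the cut $C_i$ (which separates $\overline{\pi_{k+2-i}}$ off) I would choose $X_i$ to be the side \emph{not} containing $\pi_{k+2-i}$, i.e.\ the side that is the union of the earlier blocks. Then I must check that for every $v \in V$,
\[
\sum_{i=1}^k \deg_{A_i B_i}(v)\,\chi_{X_i}(v) \;=\; \sum_{\substack{e \in \tE(o)\\ v \in \overline{e^+}}} \deg_{\overline{e^-}\,\overline{e^+}}(v).
\]
The right side counts, for each edge $e$ of $\tG$ oriented so that $v$ lies in the head block $\overline{e^+}$, the number of $G$-edges at $v$ crossing between the two blocks of $e$. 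The left side should be a regrouping of exactly the same count: an edge $e$ of $\tG$ between blocks $\overline{\pi_a}$ and $\overline{\pi_b}$ with $a < b$ (so $o$ orients it $\pi_a \to \pi_b$, since the topological order respects $o$) gets "assigned" to the cut $C_i$ with $i = k+2-b$, the step at which $\pi_b$ is split off; and the vertex $v \in \overline{\pi_b}$ lies on the side $X_i$ exactly when $v \in \overline{e^+} = \overline{\pi_b}$ is on... wait — here I need to be careful about which side $X_i$ is. Since $X_i$ was chosen as the union of earlier blocks and $v \in \overline{\pi_b}$ is the block being split off, $v \notin X_i$, so the left side would miss this contribution. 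The fix is to choose $X_i$ to be the side \emph{containing} $\pi_{k+2-i}$ instead; then the crossing-degree of $v \in \overline{\pi_b}$ with respect to cut $C_{k+2-b}$ collects precisely the $G$-edges from $v$ to \emph{all} earlier blocks, which matches summing $\deg_{\overline{e^-}\,\overline{e^+}}(v)$ over all $\tG$-edges $e$ into $v$. So the correct choice of sides is "the singleton side being peeled off," and with that choice the two sums agree edge-by-edge after the regrouping $e \leftrightarrow (\text{the step at which its head block is isolated})$.

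The main obstacle, as flagged above, is the connectedness verification: showing that peeling blocks off in reverse topological order keeps every intermediate set connected, so that the $C_i$ genuinely form a generating sequence. I expect this to follow from a short argument using acyclicity — essentially, the set $\overline{\pi_1}\cup\cdots\cup\overline{\pi_j}$ is the vertex set of the "down-closed" part of an acyclic orientation, and one shows by reverse induction that removing the maximal block from a connected acyclic oriented graph leaves it connected (otherwise a directed path from the source would be forced to pass through the removed block and then return, violating acyclicity). The remaining content is the bookkeeping identity relating $\ff(o)$ to the boundary-divisor formula, which, once the regrouping correspondence between $\tE(o)$ and the steps $C_i$ is set up, is a routine rearrangement of finite sums with no analytic subtlety. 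I would also remark that the $s$-as-source hypothesis is used only to pin down $\pi_1 = s$ and is not essential for membership in $\B(\Pi)$ — any acyclic orientation would do — but it is what makes $\restr{\ff}{\AUS}$ the right domain for the subsequent equivalence-class analysis.
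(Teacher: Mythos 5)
Your proposal is correct and follows essentially the same route as the paper: both arguments peel off a target (sink) of the acyclic orientation at each step — equivalently, work backward through a topological order — take $X_i$ to be the singleton block being split off, and use the unique-source hypothesis to show the remaining union of blocks stays connected, so the cuts form a genuine generating sequence. The bookkeeping identity you verify explicitly (including your self-correction on which side $X_i$ must be) is exactly what the paper leaves implicit in the phrase ``choose $X_1 = \overline{t}$ in (\ref{bd})'' and the subsequent induction.
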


\begin{proof}
Choose $o\in \AUS$, we show that $\ff(o) \in \B(\Pi)$.  It is easy to verify that a simple graph with an acyclic orientation has at least one source and at least one target. Hence, there exists a target $t \in \tV(o)$. We show that removing all edges contained in $\tE(o)$ and incident to $t$ represents a cut of $\tG(o)$. Suppose this were not true. Then, after removing the edges incident to $t$ there would exist at least two distinct connected components of the graph $\tG(o)$ that do not contain $t$. Each of these connected components has an induced acyclic orientation and thus each of them contains a source. But since all of  $\tG(o)$'s edges incident to $t$ point toward $t$ it follows that $\tG(o)$ has to contain at least two sources, which is a contradiction with $o\in \AUS$. This means we can take the first element of $\Pi$'s generating sequence to be the cut that has $\overline{t}$ as one of the connected components, and choose $X_1 = \overline{t}$ in (\ref{bd}). Now we can look at $\tG(o) \setminus t$ and proceed inductively by removing a target at each step. This shows that $\ff(o) \in \B(\Pi)$ and hence $\ff$ is a well defined map from $\AUS$ to $\B(\Pi)$.
\end{proof}

The number of maximal $G_\Pi$-parking functions is equal to the number of $\tG$-maximal parking functions (the  sink is $s$ for both graphs). The reader can convince himself of this fact by using Dhar's burning algorithm. Furthermore, the number of maximal $\tG$-parking functions is equal to $\# \AUS$, the number of acyclic orientations on $\tG$ with unique source $s$ \cite{PPW}. This fact allows us to show that the number of distinct $\Pi$-boundary divisors in $\Cl(G)$ is equal to the number of maximal $G_\Pi$-parking functions. Namely, we show that the map $\ff \colon \AUS \rightarrow \B(\Pi) / \img \Lap$ is a bijection.

\begin{example}
Before we arrive at the precise statement of the last claim, let us take a second look at the boundary divisor from Example $\ref{example4}$. If $\ff$ is a bijection, it follows that up to equivalence there are exactly two $\Pi$-boundary divisors of the graph $G$. One of them is the one we already saw in Example \ref{example4}, while the other one can be found in the picture below. By inspection, it is immediate that $o$ and $o'$ are the only two orientations with unique source $a$ of $\tG$. 

\begin{figure}[ht]
\label{figure6}
\begin{center}

\begin{tikzpicture}
[scale = .3, very thick = 15mm]
  
   \node (n4) at (4,1)  [test] {0};
  \node (n1) at (4,11) [test] {0};
  \node (n2) at (1,6)  [test] {1};
  \node (n3) at (7,6)  [test] {6};
  \foreach \from/\to in {n1/n2,n1/n3, n3/n4,n2/n4}
    \draw[] (\from) -- (\to);
    \path[] (n2) edge [bend left = 15] node {} (n3);
    \path[] (n2) edge [bend right = 15] node {} (n3);
    \path[] (n3) edge [bend left = 15] node {} (n4);
    \path[] (n3) edge [bend right = 15] node {} (n4); 
   
   \node (m2) at (1.5,11) [test3] {$G$};
    \node (m1) at (14,11) [test3] {$\tG (o')$};
    \node (m3) at (18,9.5) [test2] {$a$};
    \node (m4) at (12.4,4.5) [test2] {$b,d$};
    \node (m5) at (21,4.5) [test2] {$c$};
  \node (n1) at (17,9) [test] {};
  \node (n2) at (14,4)  [test] {};
  \node (n3) at (20,4)  [test] {};

    \path[->] (n2) edge  node {} (n3);
    \path[<-] (n3) edge  node {} (n1);
    \path[<-] (n2) edge  node {} (n1);

\end{tikzpicture}

\caption{A $\Pi$-boundary divisor $D'$ of $G$ and the graph $\tG$ with the orientation $o'$ such that $\ff (o') = D'$.}
\end{center}
\end{figure}
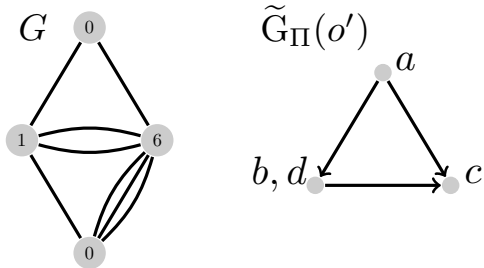 
\end{example}

\begin{proposition}
\label{prop1}
The map $\ff \colon \AUS \rightarrow \B(\Pi) / \img \Lap$ is a bijection. Hence, the number of distinct $\Pi$-boundary divisors in $\Cl(G)$ is equal to the number of maximal $G_\Pi$-parking functions.
\end{proposition}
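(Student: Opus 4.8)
The plan is to prove that $\restr{\ff}{\AUS}$, followed by the quotient $\B(\Pi)\to\B(\Pi)/\img\Lap$, is a bijection from $\AUS$ onto $\B(\Pi)/\img\Lap$; the last sentence of the proposition then follows at once, since the discussion preceding it identifies $\#\AUS$ with the number of maximal $G_\Pi$-parking functions. For $W\subseteq V$ write $\chi_W$ for the script firing every vertex of $W$ exactly once, so $\Lap\chi_W\in\img\Lap$. The first ingredient is a \emph{reversal lemma}: if $o\in\AUS$ and $\pi$ is a target of $\tG(o)$, then reversing all $\tG(o)$-edges at $\pi$ yields an acyclic orientation $o^\pi$ (in general not in $\AUS$, since $\pi$ becomes a source) with $\ff(o^\pi)=\ff(o)-\Lap\chi_{\overline\pi}$; symmetrically, reversing at a source changes $\ff$ by $+\Lap\chi_{\overline\pi}$. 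Both are direct computations from the definition of $\ff$: firing the whole component $\overline\pi$ in $G$ has exactly the effect of reversing the boundary edges at $\pi$. Hence $\ff$ is constant modulo $\img\Lap$ along any sequence of source/target reversals, and acyclicity is preserved throughout. I also use the standard fact (underlying the $\AUS$–parking-function correspondence) that from any acyclic orientation of $\tG$ one can reach, by such reversals, one whose unique source is $s$, i.e.\ every reversal class meets $\AUS$.

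For surjectivity, let $D\in\B(\Pi)$ be realized by a generating sequence $\{C_i\}_{i=1}^k$ and a choice $\{X_i\}$. Every edge of $\tG$ crosses exactly one $C_i$ (the first that separates its endpoints), so the rule ``orient each edge in the layer of $C_i$ toward the side $X_i$'' defines an orientation $o_D$; one checks $\ff(o_D)=D$, and the nested structure of a generating sequence forces $o_D$ to be acyclic (a directed cycle, after first crossing the earliest cut it meets, stays trapped on one side of that cut and cannot close up). By the reversal lemma $o_D$ is joined to some $o\in\AUS$ with $\ff(o)\sim_G\ff(o_D)=D$, so every $\Pi$-boundary-divisor class is hit.

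Injectivity is the heart of the matter. Suppose $o,o'\in\AUS$ with $\ff(o)\sim_G\ff(o')$. Let $\Phi\colon\divisors(G)\to\divisors(G_\Pi)$, $\Phi(F)_\pi=\sum_{v\in\overline\pi}F_v$, be the component-sum map; a short computation from the definition of $\ff$ gives $\Phi(\ff(o))_\pi=\indeg^{G_\Pi}_o(\pi)$, and $\Phi(\Lap\chi_{\overline\pi})=\Lap_{G_\Pi}\,1_\pi$. The crux is that the reduced script $\sigma$ with $\ff(o)-\Lap\sigma=\ff(o')$ may be taken constant on each component $\overline\pi$. Granting this, set $\overline\sigma=\sum_\pi \sigma|_{\overline\pi}\,1_\pi$; then $\indeg^{G_\Pi}_o-\Lap_{G_\Pi}\overline\sigma=\indeg^{G_\Pi}_{o'}$, so the maximal $G_\Pi$-parking functions $\indeg^{G_\Pi}_o-\mathbf 1+1_s$ and $\indeg^{G_\Pi}_{o'}-\mathbf 1+1_s$ (here $\mathbf 1$ is the all-ones divisor on $\tV$) are equivalent on $G_\Pi$, hence equal, since a divisor class contains a unique parking function; therefore $\indeg^{G_\Pi}_o=\indeg^{G_\Pi}_{o'}$, and the bijection between $\AUS$ and maximal $G_\Pi$-parking functions forces $o=o'$.

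It remains to justify the crux, and this I expect to be the main obstacle. Because a $\Pi$-boundary divisor vanishes on the non-boundary vertices of every component, $\ff(o)$ and $\ff(o')$ both vanish there, so $\Lap\sigma$ vanishes on the interior of each $\overline\pi$ and $\sigma$ is harmonic there; using the explicit boundary values of $\ff(o),\ff(o')$ on the boundary vertices of $\overline\pi$ together with the connectedness of $\overline\pi$, one should show that $\sigma$ cannot take two distinct values on a single component without forcing $\sigma>0$ everywhere — the same ``spreading'' contradiction as in the proof of the first lemma of this section (no effective divisor equivalent to a boundary divisor of a cut is supported on both sides). A possible alternative route to injectivity is to run Dhar's burning algorithm on $G$ with a sink chosen inside $\overline s$ and verify that the reduction of a $\Pi$-boundary divisor to parking form determines $o$, paralleling the use of Dhar in the proof of Lemma~\ref{splitting lemma}.
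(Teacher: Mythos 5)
Your surjectivity argument is essentially the paper's: the same orientation $o_D$ built from a generating sequence and a choice of $X_i$'s, the same style of acyclicity argument, and a walk through source/target reversals to reach $\AUS$ while staying in a fixed class modulo $\img\Lap$ (the paper carries out this walk explicitly, by repeatedly firing the set of components reachable from $s$, rather than quoting it as known; your acyclicity one-liner is also more compressed than the paper's, but the idea is the same). The reduction of the proposition's second sentence to bijectivity of $\ff$ is fine.

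The gap is in injectivity, and it sits exactly where you flagged it: the ``crux'' that the reduced script $\sigma$ with $\ff(o)-\Lap\sigma=\ff(o')$ is constant on each component $\overline{\pi}$. Granting that claim, your descent to $G_\Pi$ via $\Phi$ and the uniqueness of the parking function in a divisor class would finish the proof, and this would be a genuinely different route from the paper's. But the sketch you offer does not establish the claim. Harmonicity of $\sigma$ at interior vertices plus a maximum principle only shows that an extremum of $\sigma$ on $\overline{\pi}$ is attained at a vertex $b$ in the boundary of $\overline{\pi}$ that has an in-component neighbour with a strictly different value; at such a $b$ one has
\[
\ff(o)_b-\ff(o')_b \;=\; (\Lap\sigma)_b \;=\; \sum_{w\sim b,\; w\in\overline{\pi}}(\sigma_b-\sigma_w)\;+\;\sum_{w\sim b,\; w\notin\overline{\pi}}(\sigma_b-\sigma_w),
\]
whose left-hand side is bounded in absolute value by the number of cross edges at $b$, but whose second sum is unsigned: neighbours of $b$ lying in other components may carry arbitrarily large values of $\sigma$ and absorb the positive in-component excess. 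So no contradiction is reached one component at a time; the argument must be propagated globally across components. That global propagation is precisely the content of the paper's injectivity proof (first $\supp(\sigma)\cap\overline{s}=\emptyset$ via a spreading argument through a chain of sources, then the black/red colouring of edges and vertices forcing $\overline{s}\subset\supp(\sigma)$), and nothing in your proposal replaces it. Note also that your crux concentrates all the difficulty: once injectivity is known it holds vacuously (with $\sigma=0$), so it is not a strictly easier intermediate statement. As written, the injectivity half of the proof is incomplete.
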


\begin{proof}
We begin by proving that $\ff$ is surjective. Let $D \in \B(\Pi)$ be a boundary divisor, then we want to show that there exists $o \in \AUS$ such that $D \sim \ff(o)$. First we prove that there exists an acyclic orientation $o_1 \in \Oo(\tG)$ such that $\ff(o_1) = D$. Let $\{C_i\}_{i=1}^k$ be a $\Pi$-generating sequence and let $\{X_i\}_{i=1}^k$ be a choice of sets that can produce $D$ as defined in (\ref{bd}). We construct $o_1$ in the following way: if $\pi_1,\pi_2 \in \tV$ are connected by an edge, then there exists $i\in \overline{1,k}$ such that $C_i$ cuts some edges between $\overline{\pi}_1$ and $\overline{\pi}_2$ in $G$. In this case, $o_1$ orients $\tG$'s edge between $\pi_1$ and $\pi_2$ toward the connected component contained in $X_i$. For this to be a well defined orientation it must be checked that if $C_i$ cuts an  edge between $\overline{\pi}_1$ and $\overline{\pi}_2$, then $C_i$ cuts all the edges between these two connected components. Let $C_i$ be the first cut that removes an edge between them. By the definition of a generating sequence we get that both $\overline{\pi}_1$ and $\overline{\pi}_2$ are contained in one of the connected components produced by $C_{i-1}$, call this component $A_{i-1}$. Then $C_i$ is a cut of $A_{i-1}$. But because $C_i$ removes at least one edge between $\overline{\pi}_1$ and $\overline{\pi}_2$, it follows that $C_i$ removes all paths between $\overline{\pi}_1$ and $\overline{\pi}_2$ in $A_{i-1}$. Hence, $C_i$ removes all the edges between $\overline{\pi}_1$ and $\overline{\pi}_2$. By the definitions of $D$ and $\ff$, one can remark that $D = \ff(o_1)$. 

We now verify that $o_1$ is acyclic. Suppose 
\[
\pi_1 \rightarrow \pi_2 \rightarrow ... \rightarrow \pi_r \rightarrow \pi_1
\]
is a cycle of $o_1$. Then, choose $i$ minimal such that $C_i$ cuts one of the edges connecting $\overline{\pi}_j$ and $\overline{\pi}_{j+1}$, where $1 \leqslant j \leqslant r$. We make the convention $\pi_{r+1} = \pi_1$. By the minimality of $i$, it follows that $\overline{\pi}_j \subset A_{i-1}$ for all $j \in \overline{1,r}$. Without loss of generality suppose $C_i$ cuts an edge between $\overline{\pi}_1$ and $\overline{\pi}_2$.  By the above discussion regarding the definition of $o_1$, we obtain that $\overline{\pi}_1 \subset A_i$ and $\overline{\pi}_2 \subset B_i$ (where we might have to interchange the names of $A_i$ and $B_i$). Since $C_i$ removes all the paths between $\overline{\pi}_1$ and $\overline{\pi}_2$, it follows that there must exist $2 \leqslant l \leqslant r$ such that $\overline{\pi}_l \subset B_i$ and $\overline{\pi}_{l+1} \subset A_i$.
Since $o_1$ orients the edge between $\pi_1$ and $\pi_2$ towards $\pi_2$, it follows that $X_i = B_i$, in equation (\ref{bd}). But this implies that in $\tG(o_1)$ the edge between $\pi_l$ and $\pi_{l+1}$ is oriented toward $\pi_l$. This is in contradiction with the choice of the cycle $o_1$. Hence, $o_1$ is acyclic.

Let $u \in \Oo(\tG)$. For $v$ and $w$ two vertices in $\tV (u)$ we say that $w$ is \emph{reachable} from $v$ if there is a directed path from $v$ to $w$; we allow the empty path. Let $\R(u)$ denote the vertices reachable from $s$ in $\tG(u)$. If $\R(o_1) = \tV$, it is immediate that $s$ is the unique source of $o_1$ and thus we obtained $o_1 \in \AUS$ such that $\ff(o_1) \sim D$. Otherwise, by the choice of the set $\R(o_1)$ we know that all the edges between $\R(o_1)$ and $\tV \setminus \R(o_1)$ are directed towards $\R(o_1)$. We consider the orientation $o_2 \in \Oo(\tG)$ that orients the edges between the two sets just mentioned away from $\R(o_1)$, and orients the rest of the edges in the same direction as $o_1$ does. One can verify that $o_2$ is acyclic as well. We now look at the divisor obtained from $\ff(o_1)$ by firing all vertices in $\overline{\R(o_1)} = \bigcup_{v \in \R(o_1)}\overline{v}$ exactly once. It is easy to check that this divisor is effective and is equal to $\ff(o_2)$. We consider the set $\R(o_2)$ and repeat the procedure till we obtain $\R(o_l) = \tV$. This procedure will terminate since $\R(o_i) \subsetneq \R(o_{i+1})$ (equality holds only when $\R(o_i) = \tV$). Then $o_l \in \AUS$ such that $D \sim \ff(o_l)$. Hence, $\ff \colon \AUS \rightarrow \B(\Pi)/\img(L)$ is surjective.    

We are left to prove that $\ff$ is injective. Let $o_1$ and $o_2$ be two distinct elements of $\AUS$. We need to show $\ff(o_1) \not \sim \ff(o_2)$. Suppose they are equivalent, then there exists a unique script $\sigma \geqslant 0$, $\sigma \not > 0$ such that $\ff(o_1) - \Lap\sigma = \ff(o_2)$. We first show that $\supp(\sigma) \cap \overline{s} = \emptyset$.

Suppose there exists $v \in \supp(\sigma) \cap \overline{s}$. Then $\N(v) \cap \overline{s} \subset \supp(\sigma) \cap \overline{s}$ because otherwise there exists $w \in \N(v) \cap \overline{s}$ such that $(\ff(o_1) -\Lap\sigma)_w > \ff(o_2)_w = 0$. By applying this argument inductively and using that $\overline{s}$ is a connected component, it follows that $\overline{s} \subset \supp(\sigma)$. Let $s_1 = s$ and pick $s_2$ a source of $\tG(o_2) \setminus s_1$. Since $\supp(f(o_1)) \cap \overline{s_2} \neq \emptyset$ and since $\overline{s} \subset \supp(\sigma)$, by the choice of $s_2$ it follows that $\exists$ $v \in \overline{s_2} \cap \supp(\sigma)$. By the choice of $s_2$ we obtain similarly as before  that $\N(v) \cap \overline{s_2} \subset \supp(\sigma)$. Since $\overline{s_2}$ is connected in $G$, it follows as above that $\overline{s_2} \subset \supp(\sigma)$. We now argue inductively that $\supp(\sigma) = V$ by choosing at each step a source $s_l$ of the graph $\tG(o_1) \setminus \{s_1, s_2,...,s_{l-1}\}$. However, $\supp(\sigma) \neq V$ since $\sigma \not > 0$. Thus $\supp(\sigma) \cap \overline{s} = \emptyset$ as claimed. 

In order to complete the proof of the proposition we colour the edges of  $\tG$ as follows: if an edge has the same orientation in both $o_1$ and $o_2$, we colour it in black. Otherwise we colour it in red. We call a vertex in $\tV$ safe if all its incident edges are black. Since $o_1, o_2 \in \AUS$, it is immediate that $s$ is safe. We now colour the vertices of $\tG$ in the following way: if a vertex is safe and there exists a undirected path of safe vertices from $s$ to it, we colour it in black. Otherwise we colour it in red. By construction the set of black vertices in $\tG$ is connected. 

\begin{example}
Before completing the proof of the proposition, we consider an example of a graph $\tG$ and two acyclic orientations of it with unique source in order to illustrate how the edges and vertices are coloured. It is clear from the picture that $d$ is the only safe vertex that is not black. 
 
 \begin{center}
 \begin{tikzpicture}
[scale = .25, very thick = 15mm]
  
  \node (n1) at (5,11) [test] {};
  \node (n2) at (1,6)  [test] {};
  \node (n3) at (9,6)  [test] {};
  \node (n4) at (3,1)  [test] {};
  \node (n5) at (7,1)  [test] {};
  \foreach \from/\to in {n1/n2,n2/n4, n1/n3,n3/n4, n3/n5, n4/n5}
    \draw[->] (\from) -- (\to);
   
   \node (m2) at (-1,11) [test3] {$\tG(o_1)$};
    \node (m1) at (14,11) [test3] {$\tG (o_2)$};
  
  \node (n1) at (20,11) [test] {};
  \node (n2) at (16,6)  [test] {};
  \node (n3) at (24,6)  [test] {};
  \node (n4) at (18,1)  [test] {};
  \node (n5) at (22,1)  [test] {};
  \foreach \from/\to in {n1/n2,n2/n4, n1/n3, n4/n3, n3/n5, n4/n5}
    \draw[->] (\from) -- (\to);
    
    \node(p1) at (3.5, 11.5) [test2] {$a$};
    \node(p2) at (-0.5, 6.5) [test2] {$b$};
        \node(p3) at (10.5, 6.5) [test2] {$e$};
    \node(p4) at (1.5, 0.5) [test2] {$c$};
    \node(p5) at (8.5, 0.5) [test2] {$d$};

	\node(p1) at (18.5, 11.5) [test2] {$a$};
    \node(p2) at (14.5, 6.5) [test2] {$b$};
        \node(p3) at (25.5, 6.5) [test2] {$e$};
    \node(p4) at (16.5, 0.5) [test2] {$c$};
    \node(p5) at (23.5, 0.5) [test2] {$d$};

  \node (n1) at (35,11) [testB] {};
  \node (n2) at (31,6)  [testB] {};
  \node (n3) at (39,6)  [testR] {};
  \node (n4) at (33,1)  [testR] {};
  \node (n5) at (37,1)  [testR] {};
  \foreach \from/\to in {n1/n2,n2/n4, n1/n3, n3/n5, n4/n5}
    \draw[] (\from) -- (\to);
    \draw[red] (n3) -- (n4);
   
   \node (m2) at (29,11) [test3] {$\tG$};

    \node(p1) at (33.5, 11.5) [test2] {$a$};
    \node(p2) at (29.5, 6.5) [test2] {$b$};
    \node(p3) at (40.5, 7) [test2] {$e$};
    \node(p4) at (31.5, 0.5) [test2] {$c$};
    \node(p5) at (38.5, 0.5) [test2] {$d$};

\end{tikzpicture}
\end{center}

\end{example}

We return to the proof. Transfer $\tG$'s colouring to $\tG(o_1)$ and $\tG(o_2)$ in the obvious way. We look at the graph induced by $\tG(o_2)$ on the set of red vertices, denote it by $\R^{r}_\Pi (o_2)$. This induced graph has to have at least one source since its orientation is acyclic. Pick one of these sources and denote it by $s_R$. We obtain that $s_R$ must be connected to the set of black vertices by some edges because $o_2$ has a unique source $s$ distinct from $s_R$ and because $s_R$ is a source in $\R^{r}_\Pi (o_2)$. Since $s_R$ is a source in $\R^{r}_\Pi (o_2)$, it follows that in $o_1$ all the red edges incident to $s_R$ have to point toward $s_R$ (and there is at least one such edge since $s_R$ is a red vertex connected by an edge to some black vertex). Hence, $f(o_1)_w \geqslant f(o_2)_w$ for all $w \in \overline{s}_R$, and there exists $w' \in \overline{s}_R$ such that the inequality is strict. Thus there exists $v \in \supp(\sigma) \cap \overline{s}_R$. Since $f(o_1)_w \geqslant f(o_2)_w$ for all $w \in \overline{s}_R$, we obtain that $\N(v) \cap \overline{s}_R \subset \supp(\sigma)$. Arguing inductively we obtain that $\overline{s}_R \subset \supp(\sigma)$. Take $s_B^{(1)}$ a black vertex connected to $s_R$ by an edge, and consider an undirected path of black vertices $\{s_B^{(i)}\}_{i = 1}^l$ such that $s = s_B^{(l)}$. A similar argument as the one given for $s_R$ gives that $\overline{s}_B^{(1)} \subset \supp(\sigma)$, and by induction we obtain that $\overline{s}_B^{(i)} \subset \supp(\sigma)$ for all $i \in \overline{1,l}$. Thus $\overline{s} = \overline{s}_B^{(l)}\subset \supp(\sigma)$. But we have proven that $\supp(\sigma) \cap \overline{s} = \emptyset$. We hence arrived at a contradiction and the proof of the proposition is complete. 
\end{proof}

\begin{rmk}
This proposition also shows that $\ff$ maps acyclic orientations on $\tG$ to $\B(\Pi)/ \img L$.
\end{rmk}

This result is encouraging since it indicates that it might be possible to find a correspondence between superstable configurations and divisors that contribute to the Betti numbers. This intuition is further supported by the next corollary, which suggests that boundary divisors given by a $(k+1)$-partition have non-trivial $(k-1)$-reduced homology groups over $\mathbb{C}$. 

\begin{corollary}
\label{prop2}
If $D_0$ is a $\Pi$-boundary divisor of $G$, then there is no effective divisor $D_1$ supported in all the connected components of $\Pi$ such that $D_0 \sim D_1$.
\end{corollary}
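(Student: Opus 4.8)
The plan is to reduce the statement to Lemma 1 by induction on $k$, where $\Pi \in \Pcal_{k+1}$. Since the assertion depends only on the class of $D_0$ in $\Cl(G)$, Proposition \ref{prop1} lets us assume $D_0 = \ff(o)$ for some $o \in \AUS$. The base case $k = 1$ is exactly Lemma 1, so from now on $k \geq 2$. (Imitating the proof of Lemma 1 directly — trying to force the script $\sigma$ with $D_1 - \Lap \sigma = D_0$ to be strictly positive — does not seem to close up, since a general boundary divisor is supported near \emph{all} of $\Pi$'s internal boundaries rather than on one side of a single cut; the induction is meant to circumvent this.)

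For the inductive step, fix a target $t$ of $\tG(o)$ and let $C$ be the cut of $G$ with sides $\overline{t}$ and $V \setminus \overline{t}$ — this is a cut of $G$ by the argument used in the proof that $\restr{\ff}{\AUS}$ maps into $\B(\Pi)$, since every edge of $\tG(o)$ at $t$ points toward $t$. Unwinding the definition of $\ff$ shows that $(D_0)_v$ is the number of edges of $C$ at $v$ for each $v \in \overline{t}$ — that is, $\restr{D_0}{\overline{t}}$ coincides with $\restr{D(\overline{t})}{\overline{t}}$, where $D(\overline{t})$ is the boundary divisor of the cut $C$ supported on $\overline{t}$ — while $\restr{D_0}{V \setminus \overline{t}} = \ff'\bigl(\restr{o}{\tG \setminus t}\bigr)$, with $\ff'$ the analogue of $\ff$ for $G' := G[V \setminus \overline{t}]$ and $\Pi'$ the partition of $V \setminus \overline{t}$ into the components of $\Pi$ other than $\overline{t}$. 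Removing a target keeps $\tG(o)$ connected, so $G'$ is connected and $\Pi' \in \Pcal_k(G')$; moreover $\restr{o}{\tG \setminus t}$ is an acyclic orientation of $\widetilde{G'}_{\Pi'} = \tG \setminus t$ with the same unique source, so $\tilde{D} := \restr{D_0}{V \setminus \overline{t}}$ is a $\Pi'$-boundary divisor of $G'$. Thus $D_0 = D(\overline{t}) + \tilde{D}$.

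Suppose now, for contradiction, that $D_1 \sim_G D_0$ is effective and meets every component of $\Pi$. The key step is to replace $D_1$ by an effective $D_1' \in |D_0|$ that still meets every $\overline{\pi}$ with $\pi \neq t$ and with $D_1' - D_0 = \Lap^G \tau$ for a script $\tau$ vanishing on $\overline{t} \cup \N(\overline{t})$. Granting this: $\restr{D_1'}{\overline{t}} = \restr{D_0}{\overline{t}}$ is nonzero (since $C$ is a genuine cut), so $D_1'$ meets $\overline{t}$ as well; and restricting $D_1' - D_0 = \Lap^G \tau$ to $V \setminus \overline{t}$, the hypothesis on $\tau$ gives $\restr{(\Lap^G \tau)}{V \setminus \overline{t}} = \Lap^{G'}\bigl(\restr{\tau}{V \setminus \overline{t}}\bigr)$, so that $\restr{D_1'}{V \setminus \overline{t}}$ is an effective divisor of $G'$, equivalent over $G'$ to the $\Pi'$-boundary divisor $\tilde{D}$, and meeting every component of $\Pi'$ — contradicting the inductive hypothesis applied to $(G', \Pi')$.

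Producing such a $D_1'$ is the technically delicate point, and is the step I expect to be the main obstacle. The difficulty is that restriction to $V \setminus \overline{t}$ does not respect linear equivalence — the chip count on $\overline{t}$ must first be brought into agreement with that of $D_0$ — so one cannot merely restrict $D_1$. The plan is to move $D_1$ within $|D_0|$ by firing only vertices in and immediately around $\overline{t}$ — each such firing keeping the divisor effective and, as in the surjectivity argument of Proposition \ref{prop1}, only raising chip counts outside the fired set — until $\restr{D_1}{\overline{t}}$ agrees with $\restr{D_0}{\overline{t}} = \deg_C$; the point to be verified is that no such firing empties a component $\overline{\pi}$ with $\pi \neq t$. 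This last point is the one-dimension-up analogue of the support-propagation argument in the proof of Lemma 1, and is where the boundary-divisor structure of $\tilde{D}$ near $C$ must be exploited.
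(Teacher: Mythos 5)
Your reduction has a genuine gap at exactly the step you flag as the main obstacle: the construction of $D_1'$. The surrounding scaffolding is sound --- the decomposition $D_0 = D(\overline{t}) + \tilde{D}$ at a target $t$ of $\tG(o)$, the identification of $\restr{D_0}{V\setminus\overline{t}}$ with a $\Pi'$-boundary divisor of the induced graph $G'$, and the compatibility $\restr{(\Lap\tau)}{V\setminus\overline{t}} = \Lap^{G'}(\restr{\tau}{V\setminus\overline{t}})$ for scripts $\tau$ vanishing on $\overline{t}\cup\N(\overline{t})$ all check out --- but without $D_1'$ the induction does not close, and the proposed method for producing it is not obviously feasible. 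The unique script $\sigma\geq 0$, $\sigma\not>0$ with $D_1-\Lap\sigma = D_0$ need not be supported anywhere near $\overline{t}$, so there is no reason that firing only vertices ``in and immediately around $\overline{t}$'' brings $\restr{D_1}{\overline{t}}$ into agreement with $\restr{D_0}{\overline{t}}$; moreover the requirement that the cumulative script from $D_0$ to $D_1'$ vanish on $\overline{t}\cup\N(\overline{t})$ is in tension with performing additional firings precisely in that region --- those firings would have to cancel $\sigma$ there exactly, while preserving effectiveness and the support condition on every other component. That cancellation-with-support-control is not a routine verification; it is essentially the whole difficulty of the statement, relocated.

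The paper's proof shows that neither the induction nor the reduction to Lemma 1 is needed. After normalizing $D_0\sim\ff(o)$ with $o\in\AUS$, exactly as you do, one observes that $\ff(o)$ vanishes identically on $\overline{s}$ (no edge of $\tG(o)$ points into the unique source), whereas $D_1$ is by hypothesis supported there; hence the script $\sigma\geq 0$, $\sigma\not>0$ with $D_1-\Lap\sigma=\ff(o)$ has $\supp(\sigma)\cap\overline{s}\neq\emptyset$. The support-propagation argument already carried out in the injectivity half of Proposition \ref{prop1} --- connectivity of $\overline{s}$ forces $\overline{s}\subset\supp(\sigma)$, and then one peels off the remaining components in the order obtained by successively removing sources of the acyclic orientation --- forces $\supp(\sigma)=V$, contradicting $\sigma\not>0$. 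In other words, the obstacle you identify (that a general boundary divisor sits on all the internal boundaries of $\Pi$ rather than on one side of a single cut) is overcome not by induction on $k$ but by the choice of $o\in\AUS$, which makes $D_0$ vanish on an entire component and lets the one-cut argument of Lemma 1 be iterated along the orientation. Any repair of your $D_1'$ step would need an argument of precisely this kind, at which point the direct proof is strictly shorter.
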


\begin{proof}
\label{support corollary}
Suppose there exists an effective divisor $D_1 \sim D_0$ that is supported in all the connected components of $\Pi$. By proposition \ref{prop1} we know that the map:
\[
\ff \colon \AUS \rightarrow \B(\Pi) / \img \Lap
\]
is bijective. Hence, there exists $o \in \AUS$ such that $f(o) \sim D_0$. Take $\sigma \geqslant 0$, $\sigma \not > 0$ a script such that $D_1 - \Lap \sigma = f(o)$. Since $\supp(D_1) \cap \overline{s} \neq \emptyset$, it follows that $\supp(\sigma) \cap \overline{s} \neq \emptyset$. Now we can repeat the argument given in the proof of proposition \ref{prop1} to show that $\supp(\sigma) \cap \overline{s} = \emptyset$. We thus obtain a contradiction and the claim is proven.

\end{proof}

We now turn to the case $k= n-1$, where $n$ is the number of vertices in the graph $G$. Wilmes proved the conjecture in this case. In order to state his result we recall what minimally alive divisors are. A divisor $D$ is said to be \emph{unstable} if there exists $v\in V$ such that $D_v \geqslant \deg(v)$, and it is said to be \emph{alive} if there is no stable divisor in $|D|$. Moreover, the divisor $D$ is \emph{minimally alive} if it is alive and if $D -1_v$ is not alive for all $v \in V$ \cite{PPW}. The statement of Wilmes' result is the following(\cite{W}, \cite{PPW}):

\begin{theorem}
The $(n-1)$-st Betti number is equal to the number of maximal superstable configurations on $G$. Furthermore, $\beta_{n-1,D} > 0$ if and only if $D$ is minimally alive.  
\end{theorem}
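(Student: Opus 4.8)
The plan is to evaluate $\beta_{n-1,D}$ through Hochster's formula, reduce the non-vanishing of $\widetilde H_{n-2}(\Delta_D,\mathbb C)$ to an elementary combinatorial condition on $\Delta_D$, rewrite that condition in terms of linear systems, match it with the definition of a minimally alive divisor, and finally count the contributing divisor classes by applying Proposition \ref{prop1} to the partition $\Pi_0\in\Pcal_n$ of $V$ into singletons. For $\Pi_0$ the graph $\tG$ is the simple graph underlying $G$, one has $G_{\Pi_0}=G$, and a $\Pi_0$-boundary divisor is, up to $\img\Lap$, the (multiplicity-weighted) indegree vector of an acyclic orientation of $\tG$; so a $G_{\Pi_0}$-maximal parking function is exactly a maximal superstable configuration on $G$, and the right-hand side of Wilmes' conjecture for $k=n-1$ is precisely the number of maximal superstable configurations.

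First I would record a topological fact: since $\Delta_D$ is a subcomplex of the full simplex on the $n$-element set $V$, its only candidate for a nonzero $(n-2)$-cycle is a scalar multiple of $\partial[V]$, which lies in $\Delta_D$ iff every facet $V\setminus\{v\}$ is a face, and which bounds iff $V\in\Delta_D$. Hence $\beta_{n-1,D}=\dim\widetilde H_{n-2}(\Delta_D,\mathbb C)\in\{0,1\}$, and $\beta_{n-1,D}=1$ exactly when $\Delta_D$ contains all facets $V\setminus\{v\}$ but not $V$. Writing $\mathbf 1=\sum_{v\in V}1_v$, I would then translate, using that $W\in\Delta_D$ iff $|D-\sum_{w\in W}1_w|\neq\emptyset$: the condition becomes $|D-\mathbf 1|=\emptyset$ together with $|D-\mathbf 1+1_v|\neq\emptyset$ for every $v\in V$. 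Recognizing this as the assertion that $D$ is alive while $D-1_v$ is not alive for any $v$ — i.e. that $D$ is minimally alive — is the standard chip-firing dictionary (a class fails to be alive precisely when both $D$ and $c^{\max}-D$, where $c^{\max}_v=\deg(v)-1$, are equivalent to effective divisors, so that a stable representative exists); this settles the ``furthermore'' clause and, at the same time, shows the classes with $\beta_{n-1,D}>0$ are exactly those satisfying the displayed linear-system condition.

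For the count, I would show that these classes are exactly the $\Pi_0$-boundary divisor classes, and then invoke Proposition \ref{prop1}. One inclusion is immediate with the tools already developed: if $D$ is a $\Pi_0$-boundary divisor then $|D-\mathbf 1|=\emptyset$ is Corollary \ref{prop2}, and, for each fixed $v$, applying the source-shifting procedure from the proof of Proposition \ref{prop1} — at each step reversing the edges of $\tG$ between the set reachable from $v$ and its complement, which replaces the orientation by one in the same $\img\Lap$-class and strictly enlarges the reachable set — produces an acyclic orientation $o_v$ of $\tG$ with unique source $v$ and $\ff(o_v)\sim D$; its indegree vector vanishes at $v$ and is positive elsewhere, so $\supp(\ff(o_v))=V\setminus\{v\}$ and hence $|D-\mathbf 1+1_v|\neq\emptyset$. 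The hard part will be the reverse inclusion: that every class with $\beta_{n-1,D}>0$ is a $\Pi_0$-boundary divisor, equivalently that every minimally alive divisor is, up to $\img\Lap$, the indegree vector of an acyclic orientation of $\tG$. I expect to handle this by taking, for each $v$, the representative $D^{(v)}$ with $\supp(D^{(v)})=V\setminus\{v\}$ and running Dhar's burning algorithm, as in the proof of Lemma \ref{splitting lemma}, to read off a generating sequence of cuts (equivalently an acyclic orientation) witnessing that $D$ is boundary. Granting both inclusions, $\beta_{n-1}=\sum_{D}\beta_{n-1,D}=\#\{D\in\Cl(G):\beta_{n-1,D}>0\}=\#\bigl(\B(\Pi_0)/\img\Lap\bigr)$, which by Proposition \ref{prop1} equals the number of $G_{\Pi_0}$-maximal parking functions, i.e. the number of maximal superstable configurations on $G$, completing the proof.
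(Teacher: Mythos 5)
First, note that the paper does not prove this theorem at all: it is quoted from Wilmes' thesis \cite{W} and from \cite{PPW} and then used as a black box (the paper's own contribution in this direction is the subsequent Proposition identifying $\Pi$-boundary divisors for the singleton partition with minimally alive divisors, which is deduced \emph{from} this theorem by a counting argument). So your proposal is attempting something the paper deliberately outsources. Your topological reduction is correct and clean: since the $(n-2)$-cycle space of any subcomplex of the simplex on $V$ is spanned by $\partial[V]$, indeed $\beta_{n-1,D}\in\{0,1\}$ and $\beta_{n-1,D}=1$ exactly when $\Delta_D$ contains every facet $V\setminus\{v\}$ but not $V$, i.e.\ $|D-\mathbf 1|=\emptyset$ and $|D-\mathbf 1+1_v|\neq\emptyset$ for all $v$. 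Your forward inclusion (every $\Pi_0$-boundary class satisfies these conditions, via Corollary \ref{prop2} and source-shifting to get an acyclic orientation with unique source $v$ for each $v$) is also sound and is essentially the argument the paper gives for its own Proposition.

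The gaps are in the other two steps. First, the ``standard chip-firing dictionary'' step is not a recognition but a genuine theorem, and the termwise translation you suggest is false: $|D-\mathbf 1|=\emptyset$ is \emph{not} equivalent to $D$ being alive. On $K_2$ with vertices $a,b$, the divisor $D=5\cdot 1_a$ is alive (the only stable effective divisor is $0$, which has the wrong degree), yet $D\sim 4\cdot 1_a+1_b$ has full support, so $|D-\mathbf 1|\neq\emptyset$; more generally any effective divisor of large degree is alive but has a representative of full support. So the equivalence you need is between the two \emph{conjunctions}, it only holds at the right degree (minimally alive classes have degree $\#E$), and your parenthetical justification (``not alive iff both $D$ and $c^{\max}-D$ are equivalent to effective divisors'') is itself a nontrivial claim that, even granted, does not visibly bridge $|D-\mathbf 1+1_v|\neq\emptyset$ with ``$D-1_v$ is not alive.'' This equivalence is essentially the whole content of the ``furthermore'' clause and needs an actual argument. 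Second, you explicitly defer the reverse inclusion --- that every class with $\beta_{n-1,D}>0$ (equivalently, every minimally alive class) is a $\Pi_0$-boundary class --- to a hoped-for Dhar's-algorithm argument. That is the crux of the count: without it, Proposition \ref{prop1} only gives $\beta_{n-1}\geq\#\{\text{maximal superstables}\}$, which is exactly where the paper stops before invoking the cited theorem. As it stands the proposal establishes one inequality and one implication of the biconditional; the other halves are asserted, not proved.
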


Now, it is immediate that if $k = n-1$, then $G_\Pi$ is isomorphic to $G$. This is so because there is exactly one partition $\Pi$ in $\Pcal_{n}$ and each of its connected components contains exactly one vertex. Let $D$ be a $\Pi$-boundary divisor and let us label $G$'s vertices with the integers $1$ through $n$. We check that $\Delta_D$ contains all the faces $[1,2,...,\hat{j},...,n]$. Let $v\in V$, then there exists at least one acyclic orientation of $G$ with unique source $v$. Moreover, by proposition \ref{prop1} there exists $o$ an acyclic orientation with unique source $v$ such that $\ff(o) \sim D$. But $\supp(\ff(o)) = V\setminus {v}$, and the claim follows. The set of the faces $[1,2,...,\hat{j},...,n]$ is a cycle because it represents the boundary of $[1,2,...,n]$. However, $[1,2,...,n]$ is not contained in $\Delta_D$ because of corollary \ref{prop2}. By Hochster's formula it follows that $\beta_{n-1,D} > 0$ for all $\Pi$-boundary divisors $D$. Again by proposition \ref{prop1} we obtain that $\beta_{n-1} \geqslant \# \{G \text{-maximal parking functions}\}$. Since we know that the inequality is in fact an equality, we obtain that:

\begin{proposition}
Let $\Pi$ be the unique $n$-partition of the graph $G$. Then, a divisor $D$ is $\Pi$-boundary for the partition $\Pi \in \Pcal_{n}$ if and only if it is minimally alive.
\end{proposition}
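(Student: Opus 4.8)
The plan is to assemble the material already developed in this section. We argue up to linear equivalence, which is harmless: $\beta_{n-1,D}$ is a class invariant, and by Wilmes' theorem so is the property of being minimally alive, while it suffices to know the $\Pi$-boundary divisors up to $\img \Lap$.

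For the forward implication (every $\Pi$-boundary divisor is minimally alive) I would run exactly the computation sketched just before the statement. Since $k = n-1$ forces $G_\Pi \cong G$, Proposition \ref{prop1}, applied with each vertex $v \in V$ in turn designated as the sink/source, produces for every $v$ an acyclic orientation of $\tG = G$ with unique source $v$ whose image under $\ff$ is equivalent to the given boundary divisor $D$. Such an orientation has $v$ as its only source, so every other vertex receives an incoming edge and the support of this image is exactly $V\setminus\{v\}$; hence the face $V\setminus\{v\}$ lies in $\Delta_D$ for each $v$, i.e.\ $\Delta_D$ contains the full boundary of the simplex on $V$. By Corollary \ref{prop2} there is no effective divisor equivalent to $D$ supported on all of $V$, so the top face $[1,\dots,n]$ is absent from $\Delta_D$. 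Therefore $\Delta_D$ is precisely that boundary sphere, $\Hom_{n-2}(\Delta_D,\mathbb{C}) \cong \mathbb{C}$, and Hochster's formula gives $\beta_{n-1,D} = 1 > 0$; Wilmes' theorem then yields that $D$ is minimally alive.

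For the reverse implication I would use a counting argument. Proposition \ref{prop1} identifies the number of $\Pi$-boundary divisors in $\Cl(G)$ with $\#\{G_\Pi\text{-maximal parking functions}\} = \#\{G\text{-maximal parking functions}\}$, which by Wilmes' theorem equals $\beta_{n-1}$. Since every $\Pi$-boundary class satisfies $\beta_{n-1,D} \geq 1$ (in fact $=1$, by the previous paragraph), and since the classes with $\beta_{n-1,D} > 0$ are exactly the minimally alive ones (Wilmes again), we obtain
\[
\beta_{n-1} \;=\; \#\{\Pi\text{-boundary classes}\} \;\leq\; \#\{\text{minimally alive classes}\} \;\leq\; \sum_{D}\beta_{n-1,D} \;=\; \beta_{n-1},
\]
so every inequality here is an equality. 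As the $\Pi$-boundary classes form a subset of the minimally alive classes of the same finite cardinality, the two sets coincide, which supplies the missing inclusion.

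The only step with genuine content is the first one: showing that a $\Pi$-boundary divisor realizes every codimension-one face of the simplex on $V$. This is where Proposition \ref{prop1} is used essentially — to manufacture acyclic orientations with an arbitrarily prescribed unique source in the class of $D$ — together with Corollary \ref{prop2} to exclude the top face. The counting in the last paragraph is then bookkeeping, but it does lean on the externally known equality $\beta_{n-1} = \#\{G\text{-maximal parking functions}\}$; our construction by itself only gives the inequality $\beta_{n-1} \geq \#\{G\text{-maximal parking functions}\}$, and the reverse inclusion of the proposition would be out of reach without that input.
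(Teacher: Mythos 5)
Your proposal is correct and follows essentially the same route as the paper: use Proposition \ref{prop1} to produce, for each vertex $v$, an acyclic orientation with unique source $v$ in the class of $D$, so that $\Delta_D$ contains every codimension-one face of the simplex on $V$; exclude the top face by Corollary \ref{prop2} to get $\beta_{n-1,D}>0$; then combine the count of boundary classes from Proposition \ref{prop1} with Wilmes' theorem for $k=n-1$ to force the set of boundary classes to coincide with the set of minimally alive classes. Your write-up merely makes explicit the sandwich of inequalities that the paper compresses into ``since we know the inequality is in fact an equality.''
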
 

In general, we expect the following two results about boundary divisors to be true:

\begin{conjecture}
\label{conj BD 1}
Let $D$ be a divisor of $G$. Then $\beta_{k,D} > 0$ if and only if $D \in \B(\Pi) / \img \Lap$ for some $\Pi \in \Pcal_{k+1}$.
\end{conjecture}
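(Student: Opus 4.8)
The plan is to prove the two implications of the biconditional separately, using Corollary~\ref{prop2} and Proposition~\ref{prop1} as the main inputs. For the implication "$D \in \B(\Pi)/\img\Lap$ for some $\Pi \in \Pcal_{k+1}$ $\Rightarrow$ $\beta_{k,D} > 0$", fix $\Pi$ with connected components $\overline{\pi_1},\dots,\overline{\pi_{k+1}}$ and let $D \in \B(\Pi)$. First I would record that, by Corollary~\ref{prop2}, every effective divisor equivalent to $D$ omits at least one block, so $\Delta_D$ is contained in the subcomplex $\mathcal{N}_\Pi := \bigcup_{i=1}^{k+1}\Delta(V\setminus\overline{\pi_i})$ whose faces are the vertex-sets missing some block. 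Since every partial union $\bigcup_{i\in T}\overline{\pi_i}$ with $T \subsetneq \{1,\dots,k+1\}$ is a proper subset of $V$, each intersection of the covering simplices $\Delta(V\setminus\overline{\pi_i})$ is a non-empty simplex, so the nerve lemma gives $\mathcal{N}_\Pi \simeq \partial\Delta^{k}$ and hence $\Hom_{k-1}(\mathcal{N}_\Pi,\mathbb{C}) \cong \mathbb{C}$. It therefore suffices to show that the inclusion $\Delta_D \hookrightarrow \mathcal{N}_\Pi$ is nonzero on $(k-1)$-homology.

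To control this map I would invoke Proposition~\ref{prop1}, observing that its proof uses no special property of the chosen source: for \emph{every} block $\pi_i$, the restriction of $\ff$ to the acyclic orientations of $\tG$ with unique source $\pi_i$ is again a bijection onto $\B(\Pi)/\img\Lap$. Thus there is an acyclic orientation $o_i$ with $\ff(o_i) \sim D$ and $\supp(\ff(o_i)) \subseteq V\setminus\overline{\pi_i}$, and this support meets every other block since no vertex besides $\pi_i$ can be a source of $o_i$. Iterating the target-removal step from the proof of Proposition~\ref{prop1} yields a larger family of effective representatives of $D$ whose supports realise many faces of $\mathcal{N}_\Pi$. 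The crux is then to organise these faces into contractible subcomplexes $K_i \subseteq \Delta_D \cap \Delta(V\setminus\overline{\pi_i})$ whose intersection pattern matches that of the $\Delta(V\setminus\overline{\pi_i})$; a second application of the nerve lemma to $\bigcup_i K_i$ would identify its $(k-1)$-homology with $\Hom_{k-1}(\partial\Delta^k,\mathbb{C}) \cong \mathbb{C}$ and force $\Delta_D \hookrightarrow \mathcal{N}_\Pi$ to be nonzero on $\Hom_{k-1}$, giving $\beta_{k,D}>0$. I expect this organising step to be the main obstacle: the representatives above meet every block, but two of them need not meet a common block \emph{at a common vertex}, so one must choose them carefully — presumably via the extension-cycle constructions prepared in Section~\ref{extension cycles} — to keep the intersections $K_i \cap K_j$ non-empty. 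Since the multi-edged-tree instance of this argument is exactly the Proposition stated in the introduction, I would first settle that case and then try to reduce a general graph to a tree by deleting or contracting edges that do not affect the relevant homology.

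For the converse, "$\beta_{k,D}>0 \Rightarrow D \in \B(\Pi)/\img\Lap$ for some $\Pi \in \Pcal_{k+1}$", I would generalise the splitting argument behind Lemma~\ref{splitting lemma}. A nonzero class in $\Hom_{k-1}(\Delta_D,\mathbb{C})$ should be converted into a $k$-fold analogue of a splitting of $|D|$; running Dhar's burning algorithm along cells witnessing this class and repeatedly applying the cut-extraction argument of Lemma~\ref{splitting lemma} should peel off cuts $C_1,\dots,C_k$ forming a generating sequence for a partition $\Pi \in \Pcal_{k+1}$ whose associated boundary divisor is equivalent to $D$, in the same spirit as the proof of Theorem~\ref{thmbeta1}. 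This direction looks harder than the first, because there is no evident canonical way to read a $(k+1)$-partition off an arbitrary homology class: the co-partition structure has to be rebuilt inductively, one cut at a time, and ensuring that the process terminates with a genuine generating sequence rather than a family of mutually intersecting cuts is where the non-intersection arguments from the $k=1$ proof would have to be substantially strengthened.
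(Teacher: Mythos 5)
You should first note that the paper itself does not prove this statement: it is stated as Conjecture~\ref{conj BD 1} and left open, verified only for $k=1$ (via Theorem~\ref{thmbeta1}) and $k=n-1$, with the ``if'' direction additionally established for multi-edged trees in Proposition~\ref{hom prop 1}. So there is no proof in the paper to match your proposal against, and your argument has to stand on its own --- which, as you yourself flag at two points, it does not yet do.

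For the forward direction your framework is sound and is, in effect, a topological repackaging of the paper's tree-case strategy: Corollary~\ref{prop2} does imply $\Delta_D\subseteq\bigcup_i\Delta(V\setminus\overline{\pi}_i)$, the nerve of that cover is $\partial\Delta^k$, and the extension cycle of Proposition~\ref{hom prop 1} is precisely a $(k-1)$-cycle mapping to the fundamental class of that sphere (the paper certifies nontriviality not by naturality of the nerve lemma but by the counting argument of Lemma~\ref{essential lemma} on $\pi_1$-essential faces). Your observation that Proposition~\ref{prop1} works for every choice of source, yielding for each block an effective representative of $D$ avoiding exactly that block, is correct and useful. But the entire mathematical content lies in the step you defer: producing contractible $K_i\subseteq\Delta_D\cap\Delta(V\setminus\overline{\pi}_i)$ whose $(k+1)$-fold intersections are empty but whose proper intersections are non-empty and contractible. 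Knowing that each representative meets every other block does not give you common vertices across representatives, and for a general graph $\tG$ the orientation class of $D$ is large and the supports interact in complicated ways; no mechanism is given. The proposed reduction of a general graph to a multi-edged tree by deleting or contracting edges is not viable as stated: these operations change the Laplacian, the class group, and the linear system $|D|$, and there is no exhibited functoriality of $\Delta_D$ under them. The converse direction is the genuinely open half of the conjecture, and your sketch offers only the hope that the splitting argument of Lemma~\ref{splitting lemma} iterates; extracting from an arbitrary nonzero class in $\Hom_{k-1}(\Delta_D,\mathbb{C})$ a generating sequence of $k$ compatible cuts is exactly the obstruction that neither the paper nor your proposal overcomes (the conjecture was in fact settled by the independent works \cite{FMFS} and \cite{MSW} by entirely different, Gr\"obner-theoretic methods).
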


\begin{conjecture}
\label{conj BD 2}
Let $D \in \divisors(G)$ and suppose there exist exactly $l$ distinct partitions $\Pi_j \in \Pcal_{k+1}$ such that $D \in \B(\Pi_j)/\img \Lap$. Then $\beta_{k,D} = l$.
\end{conjecture}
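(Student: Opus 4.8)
Fix a class $D \in \Cl(G)$ and write $l = l(D)$ for the number of distinct partitions $\Pi \in \Pcal_{k+1}$ with $D \in \B(\Pi)/\img \Lap$; list them as $\Pi_1,\dots,\Pi_l$. The plan is to prove only the \emph{lower bound}
\[
\beta_{k,D} \ge l(D) \qquad \text{for every } D \in \Cl(G),
\]
and then to close the gap by a global count. Indeed, summing $l(D)$ over the finite group $\Cl(G)$ and interchanging the order of summation, $\sum_{D} l(D) = \sum_{\Pi \in \Pcal_{k+1}} \#(\B(\Pi)/\img \Lap)$, which by Proposition~\ref{prop1} equals $\sum_{\Pi \in \Pcal_{k+1}} \# \{G_\Pi\text{-maximal parking functions}\}$, and this is $\beta_k = \sum_D \beta_{k,D}$ by the now established coarse form of Wilmes' conjecture (see \cite{FMFS}, \cite{MSW}). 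Since both sides of $\sum_D \beta_{k,D} = \sum_D l(D)$ are finite and $\beta_{k,D} \ge l(D)$ term by term, equality must hold term by term; in particular $\beta_{k,D} = 0$ whenever $l(D) = 0$, so this simultaneously yields the forward implication of Conjecture~\ref{conj BD 1}. (If one instead wants a proof that does not invoke \cite{FMFS}, \cite{MSW}, one has to establish the matching upper bound $\beta_{k,D} \le l(D)$ directly, in the spirit of the second half of the proof of Theorem~\ref{thmbeta1}; I expect this to be substantially harder than the shortcut above, and it would also re-prove the coarse conjecture.)

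For the lower bound, fix $j \in \{1,\dots,l\}$. By Proposition~\ref{prop1} there is an acyclic orientation with unique source, $o_j$, of $\widetilde G_{\Pi_j}$ with $\ff(o_j) \sim D$. Iterating the ``peel off a target at a time'' construction used to show that $\restr{\ff}{\AUS}$ lands in $\B(\Pi)$, together with the ``fire a reachable set'' moves from the surjectivity part of the proof of Proposition~\ref{prop1}, produces a family of effective divisors equivalent to $D$ whose supports, after the accompanying relabellings, assemble into a subcomplex $\Sigma_j \subset \Delta_D$ that is a $(k-1)$-cycle --- indeed a combinatorial $(k-1)$-sphere modelled on $\widetilde G_{\Pi_j}$; in the special case $k = n-1$ this is exactly the boundary sphere $\partial [1,2,\dots,n]$ used after Wilmes' theorem. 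Corollary~\ref{prop2} says that no effective divisor equivalent to $D$ is supported in all $k+1$ cells of $\Pi_j$; the point to prove is that this forces $\Sigma_j$ not to bound in $\Delta_D$, so that $0 \neq [\Sigma_j] \in \Hom_{k-1}(\Delta_D, \mathbb C)$.

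It then remains to see that $[\Sigma_1],\dots,[\Sigma_l]$ are linearly independent. The key input here is that two distinct partitions in $\Pcal_{k+1}$ producing the same boundary-divisor class cannot ``cross'': if the cells of $\Pi_i$ and $\Pi_j$ interleaved, then firing a suitable union of cells of $\Pi_i$ would yield a $\Pi_j$-boundary divisor supported in every cell of $\Pi_i$, contradicting Corollary~\ref{prop2}. This is the $\Pcal_{k+1}$-analogue of the observation, used in the proof of Theorem~\ref{thmbeta1}, that equivalent cuts never intersect. Non-crossing should in turn force the spheres $\Sigma_i$ to sit inside $\Delta_D$ in a laminar (nested) fashion --- for $k=1$ this is literally $l$ nested cuts carving $\Delta_D$ into $l+1$ connected pieces --- from which the independence of their homology classes should follow.

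The real difficulty lies entirely in the lower bound, in two spots. First, proving that $\Sigma_j$ is non-bounding for a general partition: when $k = n-1$ this is immediate, because $\Delta_D$ is squeezed between $\partial [1,\dots,n]$ and $[1,\dots,n]$ and hence has an essentially rigid shape, whereas for a general $\Pi_j$ one has to control which $k$-chains actually occur in $\Delta_D$, equivalently to understand all effective divisors equivalent to $D$ and how a single vertex-firing changes their supports --- a fine analysis via Dhar's burning algorithm on the partitioned graph, glued to the orientation/boundary-divisor dictionary of Proposition~\ref{prop1}. Second, upgrading ``non-crossing'' to ``linearly independent in $\Hom_{k-1}$'' in arbitrary dimension calls for the right higher-dimensional replacement of the bare count of connected components of $\Delta_D$, which is all that was needed for $k=1$; this is precisely the place where the elementary argument behind Theorem~\ref{thmbeta1} has no direct analogue, and supplying it is, in my view, the crux of both Conjecture~\ref{conj BD 1} and Conjecture~\ref{conj BD 2}.
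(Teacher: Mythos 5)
The statement you are addressing is stated in the paper as a \emph{conjecture}, and the paper offers no proof of it: it only verifies the cases $k=1$ (Theorem~\ref{thmbeta1}) and $k=n-1$ (via Wilmes' theorem on minimally alive divisors), and for multi-edged trees proves only the weaker, one-partition statement $\beta_{k,D}>0$ (Proposition~\ref{hom prop 1}). Your proposal is likewise a strategy rather than a proof, as you yourself acknowledge. The one genuinely complete piece is the reduction to the lower bound: double-counting pairs $(D,\Pi)$ gives $\sum_D l(D)=\sum_{\Pi}\#\bigl(\B(\Pi)/\img\Lap\bigr)$, Proposition~\ref{prop1} converts this to $\sum_{\Pi}\#\{G_\Pi\text{-maximal parking functions}\}$, and the coarse conjecture (now a theorem by \cite{FMFS}, \cite{MSW}) identifies that with $\sum_D\beta_{k,D}$; so a termwise inequality $\beta_{k,D}\ge l(D)$ would force termwise equality and would simultaneously settle the forward implication of Conjecture~\ref{conj BD 1}. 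That reduction is sound and goes beyond anything recorded in the paper.

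The gap is everything after that. First, you assert that the peeling and firing constructions from Proposition~\ref{prop1} ``assemble into'' a $(k-1)$-cycle $\Sigma_j\subset\Delta_D$; but the paper's only mechanism for producing such cycles is the extension-cycle construction of Section~\ref{extension cycles}, and even there the presence of the required roof faces in $\Delta_D$ is verified only when $G$ is a multi-edged tree (using that all acyclic orientations of $\tG$ are then switch-equivalent). For a general graph the existence of such a cycle inside $\Delta_D$ is not established. Second, non-bounding does not follow from Corollary~\ref{prop2} alone: the paper needs the separate parity argument of Lemma~\ref{essential lemma} (any boundary hitting one $\pi_1$-essential face must hit a second one), and again only deploys it for trees. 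Third, the linear independence of $[\Sigma_1],\dots,[\Sigma_l]$ rests on a non-crossing heuristic with no higher-dimensional argument supplied; for $k=1$ independence comes for free from counting connected components, and you correctly identify that no analogue is available in general. So the proposal is a reasonable research plan consistent with the paper's program, but it does not prove the statement, which remains open in the paper.
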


These two statements were seen to be true for $k = 1$ and $k = n-1$, and it is easy to see that together these two results imply Wilmes' conjecture. 

In the next section we start building tools needed to study the problem for $G$ a multi-edged tree.

\section{Extension Cycles}
\label{extension cycles}

In this section we construct a special type of cycles of an abstract simplicial complex, called \emph{extension cycles}. This special type of cycles will be relevant in studying the homology of $\Delta_D$, where $D$ is a boundary divisor of a multi-edged tree. In this section we work in the simplicial complex $\Delta^{2k}$, which has a unique facet $[1,2,...,2k]$. We make this convention because we want to have all the $(k-1)$ dimensional faces with vertices in $\overline{1,2k}$ available. After we make it clear what we mean by an extension cycle, it will become evident that the same construction can be made in any simplicial complex that contains a certain collection of faces. We call the elements in $B = \{1,2,...,k\}$ \emph{base vertices}. We choose $E \subset \{k+1,k+2,...,2k\}$ and we refer to its elements as \emph{extension vertices}. We choose some faces $A$ and $A_j$ in $\Delta^{2k}$ such that:
\begin{align*}
A = [1,2,3,...,k]\\
A_j = [1,2,...,\hat{j},...,k,e_j],
\end{align*}
where $j\in \overline{1,k}$ and $e_j \in E$. Since label changing produces an isomorphic simplicial complex, we can assume for convenience that $e_i \geqslant e_j$ whenever $i \leqslant j$. We denote by $B_j$ the set of $A_j$'s base vertices and by $E_j$ the set of $A_j$'s extension vertices. We observe that $E_j$ is a singleton for every $j\in B$. In what follows we will often identify the base vertices with indices in the set $\{1,2,...,k\}$.

\begin{lemma}
Let $J \subset B$, then $\bigcap_{j\in J}B_j = B \setminus J$.
\end{lemma}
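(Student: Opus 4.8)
We have base vertices $B=\{1,\dots,k\}$, and for each $j\in B$ a face $A_j=[1,2,\dots,\hat j,\dots,k,e_j]$ whose base-vertex set is $B_j$. By construction $A_j$ contains every base vertex except $j$ (and additionally the extension vertex $e_j\notin B$), so $B_j = B\setminus\{j\}$. The claim $\bigcap_{j\in J}B_j = B\setminus J$ is then essentially a set-theoretic tautology once this identification is made.

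The plan is to first record the single observation $B_j = B\setminus\{j\}$ for each $j\in B$, which is immediate from the definition $A_j=[1,2,\dots,\hat j,\dots,k,e_j]$ together with the fact that $e_j\in E\subset\{k+1,\dots,2k\}$ is not a base vertex; hence removing the non-base vertex leaves exactly $B\setminus\{j\}$. Then I would simply compute
\[
\bigcap_{j\in J} B_j \;=\; \bigcap_{j\in J}\bigl(B\setminus\{j\}\bigr)\;=\; B\setminus\bigcup_{j\in J}\{j\}\;=\;B\setminus J,
\]
using De Morgan's law inside the finite set $B$. If $J=\emptyset$ the empty intersection is $B$ by convention, matching $B\setminus\emptyset$, so that edge case is consistent.

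There is essentially no obstacle here; the only thing to be slightly careful about is the convention for the empty intersection (which should be taken to be the ambient set $B$, not all of $\{1,\dots,2k\}$) and the fact that the extension vertices $e_j$ genuinely lie outside $B$ so they never interfere with the base-vertex sets. I would state the proof in two or three lines exactly along the lines above.

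\begin{proof}
Fix $j\in B$. By definition $A_j=[1,2,\dots,\hat j,\dots,k,e_j]$, and $e_j\in E\subset\{k+1,\dots,2k\}$, so $e_j$ is not a base vertex. Hence the set of base vertices of $A_j$ is $B_j = B\setminus\{j\}$. Now for $J\subset B$,
\[
\bigcap_{j\in J} B_j \;=\; \bigcap_{j\in J}\bigl(B\setminus\{j\}\bigr)\;=\; B\setminus\bigcup_{j\in J}\{j\}\;=\;B\setminus J,
\]
where the middle equality is De Morgan's law inside the finite set $B$, and the case $J=\emptyset$ gives $B$ on both sides under the usual convention for the empty intersection.
\end{proof}
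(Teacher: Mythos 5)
Your proof is correct and matches the paper, whose entire proof is just the remark that the two inclusions are immediate to check; you have simply written out the routine verification, correctly noting that $B_j = B\setminus\{j\}$ because $e_j\notin B$, and then applying De Morgan's law. Your care with the empty-intersection convention is also consistent with the paper's identification $A_{\emptyset}=A=[1,\dots,k]$.
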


\begin{proof}
It is immediate to check that the two sets are included one into the other.  
\end{proof}

We make the convention that faces of the simplex $\Delta^{2k}$ written between brackets always have distinct vertices and the vertices will be written in ascending order. By the choice of base vertices and extension vertices it is immediate that the former will be clustered to the left, while the latter will be clustered to the right. 

We consider the collection of sets $\Jcal = \{J \subset B \colon \#\left(\bigcup_{j\in J} E_j \right) = \#(J) \}$. In particular, we have that $\Jcal$ contains the empty set and all the singleton subsets of $B$. The reader can easily notice that the sets in $\Jcal$ are simply those sets $J$ which have the property that if $j_1, j_2 \in J$ are two distinct elements, then $e_{j_1} \neq e_{j_2}$. 

For $J \in \Jcal$ we consider the faces
$
A_J = \left[\bigcap_{j\in J} B_j\right]\oplus \left[\bigcup_{j \in J} E_j \right]
$. By the lemma presented above and by the definition of $\Jcal$ it follows that the $A_J$'s are $(k-1)$-dimensional faces. We identify $A = A_\emptyset$ and $A_j = A_{\{j\}}$, for $j\in B$. We denote the collection of faces $A_J$ with $J \in \Jcal$ by $\Acal$. We prove that the faces in $\Acal$ form a cycle in $\Delta^{2k}$. This is what we refer to as $[1,2,...,k]$'s \emph{extension cycle} with \emph{extensions} $A_j$, $j\in B$. 

\begin{example}
Before we go into proving that the set $\Acal$ represents a cycle, let us consider a few extension cycles with base $[1,2,3]$:

\begin{figure}[ht]
\label{figure7}
\begin{center}

\begin{tikzpicture}
[scale = .4, very thick = 15mm]
  
   \node (n4) at (2,7)  [test] {};
  \node (n1) at (0,2) [test] {};
  \node (n2) at (3,1.2)  [test] {};
  \node (n3) at (6,2)  [test] {};
  \foreach \from/\to in {n1/n2,n1/n4, n3/n4,n2/n4,n2/n3}
    \draw[] (\from) -- (\to);
    \draw[dotted] (n1) -- (n3);
     
   
   \node (m4) at (1.3,7.5) [test4] {$4$};
    \node (m1) at (-0.5,2.5) [test4] {$1$};
    \node (m2) at (2.4,0.8) [test4] {$2$};
    \node (m3) at (6.5,2.5) [test4] {$3$};
    
    \node (n4) at (12,7)  [test] {};
  \node (n1) at (10,2) [test] {};
  \node (n2) at (13,1.2)  [test] {};
  \node (n3) at (16,2)  [test] {};
  \node (n5) at (15,7) [test] {};
  \foreach \from/\to in {n1/n2,n1/n4,n2/n4,n2/n3, n2/n5,n5/n3,n4/n5}
    \draw[] (\from) -- (\to);
    \draw[dotted] (n1) -- (n3);
    \draw[dotted] (n4) -- (n3);
    
    \node (m4) at (11.3,7.5) [test4] {$4$};
    \node (m1) at (9.5,2.5) [test4] {$1$};
    \node (m2) at (12.4,0.8) [test4] {$2$};
    \node (m3) at (16.5,2.5) [test4] {$3$};
    \node (m5) at (15.5, 7.5) [test4] {$5$};
    
    \node (n1) at (20,2) [test] {};
  \node (n2) at (23,1.2)  [test] {};
  \node (n3) at (26,2)  [test] {};
  \node (n4) at (20,7)  [test] {};
    \node (n5) at (23,6.2) [test] {};
    \node (n6) at (26,7)  [test] {};
    
    \foreach \from/\to in {n1/n2,n1/n4,n2/n4,n2/n3, n2/n5,n5/n3,n4/n5, n5/n6, n4/n6, n6/n3}
    \draw[] (\from) -- (\to);
    \draw[dotted] (n1) -- (n3);
    \draw[dotted] (n1) -- (n6);
    
    \node (m4) at (19.5,7.5) [test4] {$4$};
    \node (m1) at (19.5,2.5) [test4] {$1$};
    \node (m2) at (22.4,0.8) [test4] {$2$};
    \node (m3) at (26.5,2.5) [test4] {$3$};
    \node (m5) at (22.5, 5.7) [test4] {$5$};
    \node (m6) at (26.5,7.5) [test4] {$6$};
    
\end{tikzpicture}

\caption{A few extension cycles with base $[123]$.}
\end{center}
\end{figure}
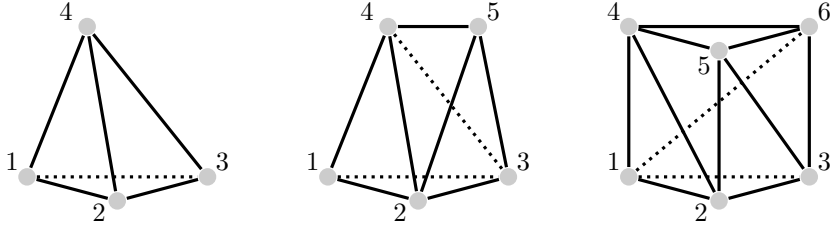 

\end{example}

The \emph{$l$-th layer} of $\Acal$ is the collection of faces $A_J$ with $J \in \Jcal$ such that $\# (J)= l$. In particular the $0$-th layer contains just the face $A$ and the $1$-st layer contains the faces $A_j$, $j\in B$. For the rest of this section we make the convention that $J$ is an element of $\Jcal$.

Before we get to the main result of this section we introduce a further convention. We use the usual definition of the boundary map for simplicial complexes such that $\partial[i_1,i_2,...,i_k] = \sum_{j = 1}^k (-1)^{j-1}[i_1,i_2,...,\hat{i_j},...,i_k]$, for $i_1 < i_2 < ... < i_k$ in $\{1,2,...,2k\}$. We call the $(k-2)$-dimensional faces that appear in the expansion of $\partial [i_1,i_2,...,i_k]$ the children of $A_J$. We denote the set of $[i_1,i_2,...,i_n]$'s children by $\supp\left(\partial [i_1,i_2,...,i_n]\right)$. We denote by $\Ecal = \bigcup_{J \in \Jcal} \supp\left(\partial A_J\right)$, the set of all children of $A_J$ when $J$ ranges over $\Jcal$.

\begin{proposition}
\label{boundary map}
For each $J \in \Jcal$ there exists a choice of sign $\epsilon_J \in \{-1,+1\}$ such that:
\begin{equation}
\label{bdm equation}
\partial \left(\sum_{J\in\Jcal} \epsilon_J A_J\right) = 0.
\end{equation}

\end{proposition}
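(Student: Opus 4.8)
The plan is to show that the chain $c = \sum_{J \in \Jcal} \epsilon_J A_J$ represents, up to sign conventions, the boundary of a single $k$-dimensional simplex, from which $\partial c = 0$ follows immediately because $\partial^2 = 0$. Concretely, I would first identify a natural candidate for that simplex. Observe that each face $A_J$ has vertex set $(B \setminus J) \cup \{e_j : j \in J\}$. If all the $e_j$ were distinct and disjoint from $B$ — call this the \emph{generic case} — then all the faces $A_J$, as $J$ ranges over \emph{all} subsets of $B$ (here $\Jcal$ would be the full power set), are precisely the codimension-one faces of the $k$-simplex $\Sigma = [1,2,\dots,k] \ast \{e_1,\dots,e_k\}$ obtained by "swapping in" a subset of extension vertices: deleting $j$ from $A$ and adding $e_j$ is one of the two ways a facet of $\Sigma$ can omit one vertex. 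Wait — more carefully, $\Sigma$ has $2k$ vertices $\{1,\dots,k,e_1,\dots,e_k\}$ only if they are all distinct, and it is not a simplex on $2k$ vertices whose facets we want; rather the right object is the \emph{$k$-dimensional simplex} on the $k+1$ vertices... this does not quite work dimensionally, so the correct framing is combinatorial, not "boundary of one simplex."

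So the approach I would actually carry out is a direct sign-assignment argument. For $J \in \Jcal$ set $\epsilon_J = (-1)^{\sigma(J)}$ for a suitable statistic $\sigma(J)$ (for instance, $\sigma(J) = \sum_{j \in J} (\text{position of } j) + \#\{\text{inversions introduced by the } e_j\text{'s}\}$, to be pinned down). Then I would expand $\partial c = \sum_{J} \epsilon_J \partial A_J$ and show every $(k-2)$-face in $\Ecal$ appears with total coefficient zero. A $(k-2)$-face $F \in \Ecal$ is obtained from some $A_J$ by deleting one further vertex; there are two sources of cancellation to track. First, if $F = A_J \setminus \{b\}$ for a base vertex $b \in B \setminus J$, then $F$ also equals $A_{J'} \setminus \{e_b\}$ where $J' = J \cup \{b\}$ (assuming $J' \in \Jcal$, i.e. $e_b \notin \{e_j : j \in J\}$): the two chains $A_J$ and $A_{J'}$ are the only faces in $\Acal$ whose boundary hits $F$, and the signs $\epsilon_J, \epsilon_{J'}$ together with the Koszul signs from $\partial$ must be arranged to cancel. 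Second, if $F = A_J \setminus \{e_j\}$ for an extension vertex (so $F$ misses \emph{all} extension vertices indexed by $J$ except loses one), I must check $F$ is hit by exactly one other $A_{J'}$, namely $J' = J \setminus \{j\}$, and again cancel. The combinatorial heart is thus: \emph{every child $F$ of some $A_J$ is a child of exactly two members of $\Acal$}, and the two occurrences can be given opposite signs consistently.

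The key steps, in order: (1) prove the "exactly two parents" claim — given $F \in \Ecal$, determine the two sets $J_1, J_2 \in \Jcal$ with $F \in \supp(\partial A_{J_1}) \cap \supp(\partial A_{J_2})$, distinguishing the base-vertex-deletion and extension-vertex-deletion cases, and handle boundary cases where a candidate parent fails to lie in $\Jcal$ (these should correspond to $F \notin \Ecal$, not to an unmatched term); (2) write down an explicit $\epsilon_J$ — I expect something like $\epsilon_J = (-1)^{|J| \cdot \text{something} + \binom{|J|}{2} + (\text{sum over } j \in J \text{ of a positional index})}$, using the convention $e_i \geq e_j$ for $i \leq j$ to control how inserting the $e_j$'s permutes the ascending order; (3) verify that for the matched pair $(J_1, J_2)$ the product of $\epsilon$ with the respective Koszul sign from $\partial$ is opposite, so the coefficient of $F$ in $\partial c$ vanishes. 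I expect step (2)–(3), the bookkeeping of Koszul signs under the reordering forced by replacing a base vertex $j$ (which sits among $1,\dots,k$) by $e_j$ (which sits among $k+1,\dots,2k$), to be the main obstacle: one must track how far each swapped vertex travels in the sorted order, and the ordering assumption $e_i \geq e_j$ for $i \leq j$ is presumably exactly what makes a clean closed-form sign possible. A cleaner alternative I would try first is to build an explicit bijection between $\Acal$ and the facets of some abstract $k$-simplex (on $k+1$ abstract vertices, one per "slot" in $B$ plus one extra) that is compatible with face inclusion, reducing (\ref{bdm equation}) to the standard identity $\partial(\partial \Delta^k) = 0$; if such a simplicial isomorphism onto (part of) $\partial \Delta^k$ exists, it sidesteps all explicit sign computation.
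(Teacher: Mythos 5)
Your overall strategy --- assign signs $\epsilon_J$ and kill the boundary termwise by showing each child face has exactly two parents in $\Acal$ whose signed contributions cancel --- is exactly the paper's proof, and your guessed positional/multiplicative form of $\epsilon_J$ is close to the paper's choice $\epsilon_\emptyset=+1$, $\epsilon_{\{j\}}=(-1)^{j-1+k}$, $\epsilon_J=\prod_{j\in J}\epsilon_{\{j\}}$. But there is a genuine gap in your step (1), precisely at the point you flag and then wave away. When a child $F$ is obtained from $A_{J}$ by deleting a base vertex $j_1\in B\setminus J$ and $J\cup\{j_1\}\notin\Jcal$ (i.e.\ $e_{j_1}=e_{j_2}$ for some $j_2\in J$), you assert that ``these should correspond to $F\notin\Ecal$, not to an unmatched term.'' That is false: $F$ is a child of $A_J\in\Acal$, hence $F\in\Ecal$ by definition, and it contributes a nonzero term to $\epsilon_J\partial A_J$ that must be cancelled by something. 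Your pairing scheme, which only ever matches $J$ against $J\cup\{b\}$ or $J\setminus\{j\}$ (a base-vertex deletion against an extension-vertex deletion in an adjacent layer), has nothing to cancel it against, so the argument as planned fails exactly in the non-generic case --- which is the typical case in the paper's applications, where several of the $e_j$ coincide.

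The missing idea is that the second parent can live in the \emph{same} layer: taking $J_2=(J\cup\{j_1\})\setminus\{j_2\}$, one checks that $J_2\in\Jcal$, that $\#J_2 = \#J$, and that $A_{J_2}$ is the union of $F$ with the base vertex $j_2$, so $F$ is also obtained from $A_{J_2}$ by a base-vertex deletion. The paper's proof treats exactly this subcase, verifies the resulting sign cancellation $(-1)^{t-1}\epsilon_{J}(-1)^{t+j_2-j_1}\epsilon_{J_2}=-1$ using the ordering convention on the $e_j$'s, and also proves uniqueness of the second parent in both subcases --- a step you should not omit. Separately, your ``cleaner alternative'' of identifying $\Acal$ with the facets of a $k$-simplex on $k+1$ vertices cannot work in general: when the $e_j$ are pairwise distinct, $\Jcal$ is the full power set of $B$, so $\Acal$ has $2^k$ facets and is the boundary of the $k$-dimensional cross-polytope (the join of the pairs $\{j,e_j\}$), not of a simplex; only in the fully degenerate case where all $e_j$ coincide does $\Acal$ reduce to the $k+1$ facets of a single $k$-simplex.
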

\begin{proof}
We begin by constructing a choice of signs $\epsilon_J \in \{-1,+1\}$ such that equation \ref{bdm equation} is satisfied. Let $\epsilon_\emptyset = +1$ and let $\epsilon_j = (-1)^{j -1 +k}$, for $j\in B$. Now, for $J \in \Jcal \setminus \{\emptyset,\{1\},\{2\},...,\{k\}\}$, let $\epsilon_J = \prod_{j\in J}\epsilon_j$.

We show that for each $N \in \Ecal$ there exist exactly two distinct $J_1$ and $J_2$ in $\Jcal$ such that $N \in \supp(\partial A_{J_1}) \bigcap \supp(\partial A_{J_2})$, and that in the expansion of $\partial \left(\sum_{J\in\Jcal} \epsilon_J A_J\right)$ the two occurrences of $N$ have opposite signs. 

Let $N \in \Ecal$, by the definition of $\Ecal$ there must exist $J_1 \in \Jcal$ such that $N \in \supp(\partial A_{J_1})$. We want to show that there exist exactly one $J_2 \neq J_1$ in $\Jcal$ such that $N \in \supp(\partial A_{J_2})$ and that $N$ has distinct signs in $\epsilon_{J_1}\partial A_{J_1}$ and in $\epsilon_{J_2}\partial A_{J_2}$. We split the problem into two cases:

\case{Case 1: $N = \left[ \bigcap_{j\in J_1 \cup j_1} B_j \right]\oplus \left[\bigcup_{j \in J_1} E_j \right] \in \supp(\partial A_{J_1})$, for some $j_1 \in B\setminus J_1 \neq \emptyset$.}

It is immediate that $N$ is a well defined child of $A_{J_1}$. We need to study two subcases: $J_1 \cup j_1 \in \Jcal$ and $J_1 \cup j_1 \not \in \Jcal$. Let us start with the former case. Then, we can take $J_2 = J_1 \cup j_1$ and it is clear that $N \in \supp(\partial A_{J_2})$. Let us check the signs. Suppose $j_1$ is on the $t$-th position in $A_{J_1}$. Then $N$ has sign $(-1)^{t-1}\epsilon_{J_1}$ in $\epsilon_{J_1}\partial A_{J_1}$. By our assumption regarding the order of the $e_j$'s it follows that $e_{j_1}$ appears on the position $k-(j_1 - t)$ in $A_{J_2}$. Hence, $N$ has sign $(-1)^{k-j_1 + t-1}\epsilon_{J_2}$ in $\epsilon_{J_2}\partial A_{J_2}$. We need to verify that $(-1)^{t-1}\epsilon_{J_1}(-1)^{k-j_1 + t-1}\epsilon_{J_2} = -1$. But 
$(-1)^{t-1}\epsilon_{J_1}(-1)^{k-j_1 + t-1}\epsilon_{J_2} = (-1)^{k-j_1}\epsilon_{j_1} = (-1)^{k-j_1}(-1)^{j_1 - 1 + k} = -1.$

Let $J_3 \in \Jcal$ such that $N \in \supp(\partial A_{J_3})$ and let $l = \#(J_1)$. Then, $A_{J_3}$ must be in the $l$-th layer or in the $(l+1)$-st layer. Suppose it is in the $(l+1)$-st layer. Then $A_{J_3}$ contains $k-l-1$ base vertices, the same as $N$. By the lemma it follows that $B \setminus (J_1 \cup j_1) = B \setminus J_3$, which implies that $J_3 = J_1 \cup j_1 = J_2$. We now assume that $A_{J_3}$ is in the $l$-th layer. Then, $B\setminus (J_1 \cup j_1) \subset B\setminus J_3$. Thus,
$J_3$ is a subset of $J_1 \cup j_1$. Since $A_{J_3}$ and $N$ must have the same extension vertices and since $J_1 \cup j_1 \in \Jcal$, we obtain $J_3 = J_1$.

We now have to study what happens when $J_1 \cup j_1 \not\in \Jcal$. Since $J_1 \in \Jcal$, there exists $j_2 \in J_1$ such that $J_1 \cup j_1 \setminus j_2 \in \Jcal$. Take $J_2 = J_1 \cup j_1 \setminus j_2$, then $N \in \supp(\partial A_{J_2})$. Let us check that $N$ has opposite signs in $\epsilon_{J_1}A_{J_1}$ and $\epsilon_{J_2}A_{J_2}$. Suppose again that $j_1$ appears on position $t$ in $A_{J_1}$. By the order we assumed on the $e_j$'s we obtain that $j_2$ appears in $ A_{J_2}$ on position $t + j_2 - j_1$. Hence, we need $(-1)^{t-1}\epsilon_{J_1}(-1)^{t+j_2-j_1}\epsilon_{J_2} = -1$, but this is easy to check. A similar argument with the one presented above shows that there is no $J_3 \in \Jcal$ distinct from $J_1$ and $J_2$ such that $N \in \supp(\partial A_{J_3}).$

\case{Case 2: $N = \left[ \bigcap_{j\in J_1} B_j \right]\oplus \left[\bigcup_{j \in J_1 \setminus {j_1}} E_j \right] \in \supp(\partial A_{J_1})$, for some $j_1 \in J_1 \neq \emptyset$.}

It is clear that $N$ considered in this case is a child of $A_{J_1}$. Take $J_2 = J_1 \setminus j_1$, then it is immediate that $J_1$ and $J_2$ are distinct and that $N \in \supp(\partial A_{J_2})$. We can denote $J_1' = J_2$ and we are now in case 1, and this completes the proof of the proposition.
\end{proof}

We recall that the unique layer zero face is called the base of the extension cycle and that the faces in the first layer are called extensions. For symmetry, we give a name to the faces in the remaining layers. We call them \emph{roof faces}. 

\section{Orientation classes}
\label{orientation classes}

In this section we introduce an equivalence relation on $\AO$, the set of acyclic orientations of $\tG$. Through the use of the map $\ff \colon \AO \rightarrow \divisors(G)$, defined in section \ref{generalized bd}, and the equivalence relations on $\AO$ we will be able to better understand the structure of the simplical complexes associated to boundary divisors, and thus better understand their homology. 

Let us consider an acyclic orientation $o_1$ on $\tG$. We say that $A\subset \tV(o_1)$ is \emph{critical} with respect to $o_1$ if all the edges between $A$ and $\tV(o_1)\setminus A$ point away from $A$ or all point toward $A$. For $A$ a critical set, a \emph{switch} at $A$ of $o_1$ is an operation that changes the orientation of all the edges between $A$ and $\tV(o_1)\setminus A$ and preserves the orientation of the remaining edges. This process produces a new acyclic orientation $o_2 \in \AO$. If $A$ is critical with respect to $o_1$, we denote the $A$-switch of $o_1$ by $o_1 \xrightarrow{A} o_2$.   

\begin{dfn}
We say two orientations $o_1, o_2 \in \AO$ are equivalent and we denote it by $o_1 \sim o_2$ if there exists a sequence of critical sets and switches such that:
\[
o_1 \xrightarrow{A_1,A_2,...,A_n} o_2
\]

It is easy to check that $\sim$ does define an equivalence relation on $\AO$.
\end{dfn}

We notice that $o_1 \sim o_2$ implies $\ff(o_1) \sim \ff(o_2)$. This is because there exists a sequence $\{A_i\}_{i=1}^n$ of critical sets which switched take $o_1$ to $o_2$. Furthermore, we can choose the sets $A_i$ in such a way that before each switch the arrows that have to change orientation point toward $A_i$.  Then we take $\sigma$ a script such that $\sigma_v = \sum_{i=1}^n \chi_{A_i}(v)$. One can verify that $\ff(o_2) = \ff(o_1) - \Lap \sigma$. This discussion together with the proof of proposition \ref{prop1}, leads us to the following equivalence.

\begin{lemma}
Let $o_1 ,o_2\in \AO$. Then $\ff(o_1) \sim \ff(o_2)$ if and only if $o_1 \sim o_2$. 
\end{lemma}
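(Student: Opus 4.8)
The statement to prove is the equivalence: for $o_1, o_2 \in \AO$, we have $\ff(o_1) \sim \ff(o_2)$ if and only if $o_1 \sim o_2$. One direction is already established in the discussion preceding the lemma: if $o_1 \sim o_2$, then a sequence of switches along critical sets $\{A_i\}$ (chosen so that each edge to be reoriented points toward $A_i$ before the switch) realizes $\ff(o_2) = \ff(o_1) - \Lap\sigma$ with $\sigma_v = \sum_i \chi_{A_i}(v)$, so $\ff(o_1) \sim \ff(o_2)$. The plan is therefore to concentrate on the converse: $\ff(o_1) \sim \ff(o_2) \implies o_1 \sim o_2$.

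For the converse, first I would reduce to the case of orientations in $\AUS$. Given an arbitrary acyclic orientation $o$, by repeatedly performing switches at $\overline{\R(o)}$-type critical sets (the source-reachability trick from the proof of Proposition \ref{prop1}) one can move $o$ to an orientation with unique source $s$; crucially each such set is critical, so this exhibits $o \sim o'$ with $o' \in \AUS$. Since $\sim$ on $\AO$ is an equivalence relation and, by the forward direction, equivalent orientations have equivalent images, it suffices to prove the claim for $o_1, o_2 \in \AUS$: if $\ff(o_1) \sim \ff(o_2)$ with both in $\AUS$, then $o_1 \sim o_2$. The key input here is the injectivity part of Proposition \ref{prop1}: $\ff \colon \AUS \to \B(\Pi)/\img\Lap$ is a bijection. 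If $o_1, o_2 \in \AUS$ and $\ff(o_1) \sim \ff(o_2)$, then injectivity forces $o_1 = o_2$, hence trivially $o_1 \sim o_2$. Chaining $o_1 \sim o_1' \sim o_2' \sim o_2$ (with $o_1', o_2' \in \AUS$ and $o_1' = o_2'$ by injectivity applied to $\ff(o_1') \sim \ff(o_1) \sim \ff(o_2) \sim \ff(o_2')$) completes the argument.

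The main obstacle I anticipate is not the logical skeleton above but verifying that the reduction of an arbitrary acyclic orientation to one in $\AUS$ proceeds entirely through \emph{switches at critical sets}. One must check that the set $\R(o)$ (vertices reachable from the chosen sink/source $s$) has the property that all edges between $\R(o)$ and $\tV \setminus \R(o)$ point the same way — indeed they all point into $\R(o)$ by maximality of reachability — so $\R(o)$ is critical, and switching it yields a new acyclic orientation whose reachable set strictly grows, exactly as in the surjectivity argument of Proposition \ref{prop1}. Since $\tV$ is finite this terminates at some $o' \in \AUS$, and the whole process is a finite chain of critical-set switches, i.e. $o \sim o'$. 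I would also double-check the compatibility of the two notions of $\sim$ (on divisors versus on orientations) with the sink convention, but this is routine given the preceding discussion. With these checks in place, the proof is the two-line combination of Proposition \ref{prop1}'s injectivity and surjectivity machinery with the forward direction already recorded.
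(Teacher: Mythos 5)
Your proposal is correct and follows exactly the route the paper intends: the forward direction is the firing-script computation in the discussion preceding the lemma, and the converse is obtained by reducing each orientation to its unique $\AUS$ representative via switches at the critical reachability sets $\R(o)$ and then invoking the injectivity of $\ff$ on $\AUS$ from Proposition \ref{prop1}. The paper leaves these details implicit ("this discussion together with the proof of proposition \ref{prop1}"), and your write-up supplies precisely the checks needed.
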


In addition, by proposition \ref{prop1}, this immediately implies that for some fixed vertex $s\in \tV$ each class in $\AO / \sim$ contains exactly one acyclic orientation with unique source $s$. This fact was already known in the literature \cite{BC}.

\section{Multi-edged Trees}

In this section we study the homology of boundary divisors when the graph $G$ is a multi-edged tree. The main result of this section is the following: 

\begin{proposition}
\label{hom prop 1}
Let $G$ be a multi-edged tree and let $D \in \divisors(G)$ be a boundary divisor with respect to the partition $\Pi \in \Pcal_{k+1}$. Then $\beta_{k,D} > 0$.
\end{proposition}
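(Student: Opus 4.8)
The plan is to compute the homology of $\Delta_D$ by exhibiting an explicit nonzero cycle in the $(k-1)$-st chain group, using the extension cycles constructed in Section~\ref{extension cycles}. By Hochster's formula it suffices to produce a $(k-1)$-cycle in $\Delta_D$ that is not a boundary. The key observation is that, when $G$ is a multi-edged tree, the graph $\tG$ is a tree as well (collapsing connected subsets of a tree yields a tree), so each cut $C_i$ in a generating sequence of $\Pi$ removes exactly one bundle of parallel edges, and the combinatorics of which faces lie in $\Delta_D$ becomes highly structured.

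First I would use Proposition~\ref{prop1} together with the orientation-class machinery of Section~\ref{orientation classes}: every $\Pi$-boundary divisor is $\ff(o)$ for some acyclic orientation $o$ of $\tG$, and the faces of $\Delta_D$ of top dimension $k-1$ are in correspondence with certain acyclic orientations $o' \sim o$ — namely, $\supp(\ff(o'))$ is the set of non-source vertices of $\tG(o')$, which has size $k$ exactly when $o'$ has a unique source. So the facets of $\Delta_D$ (among those of dimension $k-1$) correspond to the $k$ choices of which vertex of the $(k+1)$-vertex tree $\tG$ is the unique source, subject to such an orientation being reachable by switches. Next I would pick a convenient vertex $s$ to be the sink/source, identify the $k$ "extension" faces $A_j = \supp(\ff(o_j))$ where $o_j$ is obtained by moving the source appropriately, and check that these, together with the base face $A = [1,\dots,k]$ coming from the canonical orientation, satisfy the incidence pattern required to form an extension cycle in the sense of Proposition~\ref{boundary map}. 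Since $\tG$ is a tree, the "$e_j$" data (which extension vertex replaces base vertex $j$) is controlled by the tree structure, and the set $\Jcal$ of admissible index sets is exactly the collection of $J$ for which the corresponding simultaneous source-moves are compatible; I expect all these intermediate faces $A_J$ to genuinely lie in $\Delta_D$ because each is $\supp(\ff(o_J))$ for an acyclic orientation $o_J \sim o$, hence realizes an effective divisor linearly equivalent to $D$.

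With the extension cycle $z = \sum_{J \in \Jcal} \epsilon_J A_J$ in hand, Proposition~\ref{boundary map} gives $\partial z = 0$, so $z$ is a $(k-1)$-cycle in $\Delta_D$. To conclude $\beta_{k,D} > 0$ I must show $z$ is not a boundary, i.e.\ $[z] \neq 0$ in $\Hom_{k-1}(\Delta_D, \mathbb{C})$. For this I would argue that $\Delta_D$ contains no $k$-dimensional face whose boundary could cancel $z$: a $k$-face would be a $(k+1)$-subset of $V$ supported by some effective divisor equivalent to $D$, and by Corollary~\ref{prop2} no effective divisor equivalent to $D$ is supported in all $k+1$ connected components of $\Pi$ simultaneously — this is precisely the obstruction that kills the top face. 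More carefully, since $G$ is a tree, any effective $D_1 \sim D$ has support whose "projection" to $\tG$ misses at least one vertex, so all faces of $\Delta_D$ have dimension $\leq k-1$, whence $\Hom_{k-1}(\Delta_D,\mathbb{C})$ is the full cycle group and any nonzero cycle gives nonzero homology; it then remains only to check $z \neq 0$ as a chain, which is clear since $\epsilon_\emptyset A = \pm A$ appears with nonzero coefficient.

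The main obstacle I anticipate is the bookkeeping in the middle step: verifying that every $A_J$ for $J \in \Jcal$ actually occurs as $\supp(\ff(o_J))$ for a legitimate acyclic orientation in the right orientation class, and conversely that these are (together with lower faces) all the faces of $\Delta_D$ of dimension $k-1$, so that the extension-cycle construction applies verbatim with the simplex $\Delta^{2k}$ replaced by $\Delta_D$. This requires translating the tree structure of $\tG$ and the generating sequence of $\Pi$ into the "base vertex / extension vertex" language of Section~\ref{extension cycles} — in particular showing that the $e_j$ can be ordered as assumed there, which should follow from choosing the generating sequence of $\Pi$ compatibly with a rooting of the tree $\tG$ at $s$. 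Once that dictionary is set up, Propositions~\ref{prop1} and~\ref{boundary map} and Corollary~\ref{prop2} do the rest.
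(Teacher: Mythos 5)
Your construction of the candidate cycle follows the same route as the paper: start from the unique-source acyclic orientation, produce the extension faces from supports of equivalent orientations, and invoke Proposition~\ref{boundary map} to obtain a $(k-1)$-cycle in $\Delta_D$. Two caveats even at this stage: the faces of the extension cycle must be chosen \emph{subsets} of the supports $\supp(\ff(o_J))$ (one vertex from each relevant component boundary), not the full supports, since $\supp(\ff(o'))$ can have more than $k$ elements even when $o'$ has a unique source; consequently your dictionary ``facets of dimension $k-1$ $\leftrightarrow$ the $k$ choices of source'' is not correct as stated. These are repairable. The paper makes the choice precise via the map $\T$ (walking each component toward the root $\pi_1$ of the tree $\tG$), which also guarantees the roof faces lie in $\Delta_D$ and that the base is the only $\pi_1$-essential face of the cycle.

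The genuine gap is in your final step. You argue that the cycle cannot be a boundary because, by Corollary~\ref{prop2}, every effective $D_1\sim D$ misses some component of $\Pi$, ``so all faces of $\Delta_D$ have dimension $\leq k-1$'' and $\Hom_{k-1}(\Delta_D,\mathbb{C})$ is the full cycle group. This inference is false: Corollary~\ref{prop2} bounds the number of \emph{components} that $\supp(D_1)$ meets, not the number of \emph{vertices} in $\supp(D_1)$, and a single component $\overline{\pi}_i$ may contribute many support vertices. For a concrete counterexample, let $G$ be the multi-edged tree with a triple edge between $u$ and $v$ and single edges $v w_1$ and $v w_2$, and let $\Pi=\{\{u\},\{v,w_1,w_2\}\}$, so $k=1$. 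The boundary divisor $3\cdot 1_v$ is equivalent (fire the set $\{u,v\}$) to the effective divisor with one chip on each of $v,w_1,w_2$, so $\{v,w_1,w_2\}$ is a face of $\Delta_D$ of dimension $2>k-1=0$. Hence $\partial_k$ is not trivially zero on $\Delta_D$, and one must genuinely show the constructed cycle is not in $\img\partial_k$. This is exactly what the paper's Lemma~\ref{essential lemma} supplies: any chain of the form $\partial\left(\sum_i c_iA_i\right)$ whose support contains one $\pi_1$-essential face must contain a second one, whereas the constructed extension cycle contains exactly one (its base). Your proposal has no substitute for this lemma, so the non-vanishing of the homology class is not established.
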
 

In order to be able to study the homology of these boundary divisors we need to define several more concepts. We start by introducing the "boundaries" of the connected components of a partition $\Pi \in \Pcal_{k+1}$. Suppose $\{\pi_1, \pi_2,..., \pi_{k+1}\}$ are these connected components; we define: 

\[
B(\pi_j,I) = \left\{v \in \pi_j \colon \sum_{\pi_i \in I} deg_{\overline{\pi}_j\overline{\pi}_i}(v) > 0 \right\}
\] 
for $I \subset \{\pi_1,\pi_2,...,\pi_{k+1}\} \setminus \{j\}$. If $I = \{\pi_1,\pi_2,...,\pi_{k+1}\} \setminus \{\pi_j\}$, we denote $B_{j} = B(\pi_j,I)$. The set $B_j$ represents the \emph{boundary} of $\pi_j$. By $G_b = (V_b, E_b)$ we denote the graph induced by $G$ on the set $V_b = \bigcup_{j=1}^{k+1} B_j$. We call the set $V_b$ the \emph{boundary} of $\Pi$. For a partition $\Pi\in\Pcal_{k+1}$ and a boundary divisor $D \in \B(\Pi)$ we define the \emph{orientation class} of $D$ on $\tG$ as the set:
\[
\Oo(D) = \left\{o \in \AO \colon f(o) \sim D \right\}.
\]

By our discussion regarding orientation classes from the previous section we can say that $\Oo(D)$ represents exactly one class in $\AO / \sim$. Let $\Delta_D^{(b)} = \{W \colon W \subset \supp(f(o)) \text{, for some } o \in \Oo(D)\}$. Then $\Delta_D^{(b)}$ is a subcomplex of $\Delta_D$, supported in $V_b$. We refer to this complex as $D$'s \emph{boundary simplicial complex}.

In order to prove proposition \ref{hom prop 1} we will construct 
a cycle in the $(k-1)$-st chain group for the simplicial complex $\Delta_D^{(b)}$ and then show that it is not spanned by boundaries in $\Delta_D$. For this we need the notion of $k$-\emph{supported} faces. Let $W \in K$, where $K$ is a simplicial complex on the vertices of $G$. We say $W$ is $k$-supported with respect to a partition $\Pi\in\Pcal_{k+1}$ if its support intersects exactly $k$ of the $k+1$ connected components of $\Pi$. Recall that we denoted by $\{\pi_1, \pi_2,..., \pi_{k+1}\}$ the connected components of $\Pi$. If $W$ is $k$-supported in $\Pi \setminus \pi_1$ and is $(k-1)$-dimensional, we say $W$ is $\pi_1$-\emph{essential}. The following lemma is crucial in the proof of proposition \ref{hom prop 1}.

\begin{lemma}
\label{essential lemma}
Let $D$ be a $\Pi$-boundary divisor and let $W \in \Delta_D$ be a $\pi_1$-essential face. Let $\{A_i\}_{i=1}^l$ be a set of $k$-dimensional faces in $\Delta_D$ and let $\{c_i\}_{i=1}^l$ be a sequence of complex numbers such that $W \in \supp\left( \partial\left( \sum_{i=1}^l c_iA_i\right)\right)$. Then, there exists $W_0 \neq W$ a $\pi_1$-essential face such that $W_0 \in \supp\left(\partial\left(\sum_{i=1}^l c_iA_i\right)\right)$.
\end{lemma}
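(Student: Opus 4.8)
The statement is a local combinatorial fact about which $(k-1)$-faces can appear as children of a chain of $k$-faces in $\Delta_D$, so the plan is to work entirely at the level of supports of divisors and the partition structure. First I would unwind the definitions: a $k$-face $A_i \in \Delta_D$ has support meeting at most $k$ of the $k+1$ components $\{\pi_1,\dots,\pi_{k+1}\}$, but since $\dim A_i = k$ it has $k+1$ vertices, so in fact its support meets \emph{exactly} $k$ components, and exactly two of its vertices lie in one common component while the other $k-1$ components each contribute a single vertex. Deleting a vertex from $A_i$ gives a $(k-1)$-face; this child is $(k-1)$-supported (hence $\pi_m$-essential for the missing component $\pi_m$) precisely when we delete one of the two vertices sharing a component, and it is $(k-2)$-supported (meeting only $k-1$ components) when we delete one of the singleton vertices. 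So each $k$-face $A_i$ contributes exactly two $\pi_{m(i)}$-essential children and $k-1$ children that are not essential for any component.

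The key structural observation I would isolate is: if $A_i$ has a $\pi_1$-essential child, then $A_i$ is $k$-supported in $\Pi\setminus\pi_1$ with its ``doubled'' component being some $\pi_r \neq \pi_1$, and $A_i$ has \emph{two} $\pi_1$-essential children, obtained by deleting one or the other of the two vertices of $A_i$ lying in $\pi_r$ — and these two children differ exactly in that one vertex. Now suppose $W$ is $\pi_1$-essential and lies in $\supp(\partial(\sum c_i A_i))$. Group the $A_i$ that have $W$ as a child; each such $A_i$ is obtained from $W$ by adding one vertex in the component $\pi_{r(W)}$ that contains exactly one vertex of $W$ (that is the only way to enlarge $W$ to a $k$-face still missing $\pi_1$ and still inside $\Delta_D$). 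For each such $A_i$, its \emph{other} $\pi_1$-essential child $W_i'$ is gotten from $A_i$ by deleting instead the original $W$-vertex in $\pi_{r(W)}$. The plan is to show that the coefficient of $W$ in $\partial(\sum c_i A_i)$ being nonzero forces, after a sign/cancellation bookkeeping, that not all the $W_i'$ can cancel against each other and against $\pi_1$-essential children of $k$-faces not containing $W$; hence some $\pi_1$-essential $W_0 \neq W$ survives with nonzero coefficient.

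Concretely I would argue by contradiction: assume $W$ is the \emph{only} $\pi_1$-essential face in $\supp(\partial(\sum c_i A_i))$. Consider the chain $c := \sum c_i A_i$ restricted to those $A_i$ that are $k$-supported in $\Pi\setminus\pi_1$ (the others contribute no $\pi_1$-essential children at all, so they are irrelevant to this analysis and can be discarded). For such a chain, define the ``essential boundary'' $\partial^{\mathrm{ess}}$ by keeping only the $\pi_1$-essential children; this is a well-defined linear map because each relevant $A_i$ contributes exactly its two $\pi_1$-essential children with opposite incidence signs coming from the two vertices in its doubled component. Then $\partial^{\mathrm{ess}} c$ has the single term $(\pm)(\text{coeff})\,W$. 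But I claim $\partial^{\mathrm{ess}}$ behaves like the simplicial boundary of an auxiliary complex — namely, contract each component $\pi_i$ ($i\neq 1$) to determine which vertex of $\pi_i$ is ``used,'' and the two essential children of $A_i$ record the two choices within $\pi_{r(i)}$. Summing incidences, the total coefficient count of $\pi_1$-essential faces in $\partial^{\mathrm{ess}} c$ must satisfy a parity/balance condition that forbids a single nonzero term, exactly as in a chain of a graph whose boundary cannot be a single vertex. This is where I expect the main obstacle: making the ``auxiliary complex'' precise and verifying the sign cancellations is the delicate part, because the faces live in $\Delta^{2k}$-style ambient simplices and signs depend on the global ordering of vertices — so I would likely want to phrase it as: the two $\pi_1$-essential children of each $A_i$ appear in $\partial A_i$ with coefficients $+1$ and $-1$ relative to a sign depending only on $A_i$, and then observe that $W$ appearing alone would make $\partial^{\mathrm{ess}} c$ a boundary that is a single simplex, contradicting that the total signed count of essential faces (weighted appropriately) is forced to vanish — essentially the observation that a nonzero $0$-chain-like object supported on one vertex is never in the image of this incidence map. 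I anticipate needing a short separate sign lemma (analogous to Proposition \ref{boundary map}) to pin this down cleanly.
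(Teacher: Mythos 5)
Your proposal follows essentially the same route as the paper's proof: discard the $k$-faces not supported entirely in $\pi_2,\dots,\pi_{k+1}$ (justified by Corollary \ref{prop2}), observe that each remaining $A_i$ has exactly two $\pi_1$-essential children which occur with opposite signs, and conclude that the signed sum of the coefficients of all $\pi_1$-essential faces in $\partial\left(\sum_i c_iA_i\right)$ vanishes, so a single surviving essential face $W$ is impossible. The sign point you flag as delicate is resolved in the paper exactly as you anticipate: one fixes a componentwise orientation on $\pi_1$-essential faces and reorders each $A_i$ so that its two vertices from the doubled component occupy adjacent positions, which forces the two essential children to carry opposite signs.
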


\begin{proof}
We can assume without loss of generality that $c_i \in \mathbb{R}$, for $i =\overline{1,l}$. This is because we can split the sum $\sum_{i=1}^l c_iA_i$ into real and imaginary parts and work with each of them individually. By corollary \ref{prop2}, discussed in section \ref{generalized bd}, we obtain that it is enough to study the case where each $A_i$ is $k$-supported in $\Pi\setminus \pi_1$, for $i\in \overline{1,l}$. 

We use the partition $\Pi$ to choose the positive orientation for all $\pi_1$-essential faces in $\Delta_D$. If $A$ is $\pi_1$-essential, then we write $A = a_2\wedge a_3 \wedge...\wedge a_{k+1}$, where $a_i \in \pi_i$ for $i\in \overline{2,k+1}$. We can choose such an orientation when $A$ is $\pi_1$-essential because $\# (\supp (A) \cap \pi_i) = 1$ for $i \in \overline{2,k+1}$.  We now write $A_i = a_{i1}\wedge a_{i2} \wedge ... \wedge a_{i(k+1)}$ with the property that there exists $r\in \overline{2,k}$ such that $a_{ij} \in \pi_{j+1}$ for $j\in \overline{1,r}$ and $a_{ij}\in \pi_{j}$ for $j\in\overline{r+1,k+1}$. This can be done since $A_i$ is $\pi_1$-essential and has $k+1$ vertices  supported in $k$ connected components. From this choice of orientation for the $A_i$'s it is obvious that there are exactly two $\pi_1$-essential faces in $\supp\left(\partial A_i\right)$, and that they appear with opposite signs. For each face $A_i$, we denote by $B_{i1}$ and $B_{i2}$ the $\pi_1$-essential faces that belong to $\supp(\partial A_i)$. 

We introduce the following "characteristic" function:

\[
\chi_{>0}(B,A) = 
\begin{cases}
	1, 		&\text{if $B$ has positive sign in $\partial A$.}\\
	0,     &\text{otherwise}
\end{cases}
\]

The function $\chi_{<0}$ is defined analogously (it is equal to $1$ if $B$ has negative sign in $\partial A$). We now consider the following sums:

\begin{align*}
S_+ = \sum_{i =1}^l\left(\chi_{> 0}(B_{i1}, A_i)c_i + \chi_{> 0}(B_{i2}, A_i)c_i\right) \\
S_- = \sum_{i =1}^l\left(\chi_{< 0}(B_{i1}, A_i)c_i + \chi_{< 0}(B_{i2}, A_i)c_i\right).
\end{align*}
Since for each face $A_i$, $B_{i1}$ and $B_{i2}$ appear with distinct signs in the expansion of $\partial A_i$, it follows that $S_+ = S_-$. 

Suppose there is no $W_0 \neq W$ such that 
$W_0 \in\supp\left(\partial\left(\sum_{i=1}^l c_iA_i\right)\right)$. Then the contributions of each $\pi_1$-essential face in $\bigcup_{i=1}^l \supp(\partial A_i) \setminus \{W\}$ to $S_+$ and $S_-$ are equal. Since $S_+ = S_-$, we obtain that $W$'s contributions to $S_+$ and $S_-$ are equal, as well. It follows that $W \not \in \supp\left(\partial\left(\sum_{i=1}^l c_iA_i\right)\right)$. This is a contradiction, and thus the lemma is proven.
\end{proof}

The idea of the proof of proposition \ref{hom prop 1} is to construct an extension cycle that contains exactly one $\pi_1$-essential face. In order to give a clear presentation of how to achieve this, we need another helping map. We introduce the map $\T \colon \Pi \rightarrow \Pi$, from the set of $\Pi$'s connected components to itself. The map $\T$ is defined as follows:

\[
\T(\pi_i) = 
\begin{cases}
	\pi_1, 		&\text{if $\pi_i = \pi_1$.}\\
	\pi_j,     &\text{$\pi_j \in \Pi$, $\dist(\pi_i, \pi_j) = 1$ and $\dist(\pi_1, \pi_j) = \dist(\pi_1,\pi_i) - 1$.}
\end{cases}
\]

Since $G$ is a multi-edged tree, this map is well defined. For brevity, we might write $\T(i)$ instead of $\T(\pi_i)$. We illustrate the behaviour of the map $\T$ and an example of an extension cycle.

\begin{example}
\label{example7}
We consider the multi-edged tree $G$ in Figure $10$. We denote the connected components of the partition presented in the picture as follows: $\pi_1 = \{1\}$, $\pi_2 = \{2, 3\}$, $\pi_3 = \{4\}$, $\pi_4 = \{5\}$ and $\pi_5  = \{6\}$. In this example, since $\tG$ is a tree, all the orientations of $\tG$ belong to the same equivalence class as $o$. Using this it can be checked that there is an extension cycle in $\Delta_D$ with base $[2,4,5,6]$ and extensions $[2,3,4,5]$, $[2,3,4,6]$, $[1,2,5,6]$ and $[1,4,5,6]$. By the construction of an extension cycle described in section \ref{extension cycles}, it follows that, in addition to the faces already mentioned, the cycle must also contain the following roof faces: $[1,2,3,5]$, $[1,3,4,5]$, $[1,2,3,6]$ and $[1,3,4,6]$. By inspection, one sees that these faces are contained in $\Delta_D$. For this cycle, the extension vertices are $\{1,3\}$. 
For this particular multi-edged tree $G$, we have $T(\pi_1) = \pi_1$, $\T(\pi_2) = \pi_1$, $\T(\pi_3) = \pi_2$, $\T(\pi_4) = \pi_2$ and $\T(\pi_5) = \pi_4$.

\begin{figure}[ht]
\label{figure8}
\begin{center}

\begin{tikzpicture}
[scale = .4, very thick = 15mm]
  
   \node (n4) at (1.5,4)  [test] {$1$};
  \node (n1) at (3,10) [test] {$0$};
  \node (n2) at (3,6)  [test] {$1$};
  \node (n3) at (6,6)  [test] {$0$};
  \node (n5) at (8,4) [test] {$1$};
  \node (n6) at (8, 1) [test] {$1$};
  \foreach \from/\to in {n1/n2,n2/n3, n2/n4,n3/n5,n5/n6}
    \draw[] (\from) -- (\to);
    \path[] (n2) edge [bend left = 20] node {} (n3);
    \path[] (n2) edge [bend right = 20] node {} (n3);
   
   \node (m4) at (0.8,4.6) [test4] {$4$};
    \node (m1) at (2.3,10.6) [test4] {$1$};
    \node (m2) at (2.3,6.6) [test4] {$2$};
    \node (m3) at (6.7,6.6) [test4] {$3$};
    \node (m5) at (8.7,4.6) [test4] {$5$};
    \node (m6) at (8.7,1.6) [test4] {$6$};
    \node (m7) at (1, 11) [test2] {$G$};
    
    \draw[dotted] (2,8) -- (4,8);
    \draw[dotted] (1.7,5.5) -- (3, 4.3); 
    \draw[dotted] (6.7,4.4) -- (8,5.8);
    \draw[dotted] (7,2.5) -- (9,2.5);
    
     \node (n4) at (16.5,4)  [test] {};
  \node (n1) at (18,10) [test] {};
  \node (n2) at (18,6)  [test] {};
  \node (n3) at (18,6)  [test] {};
  \node (n5) at (20,4) [test] {};
  \node (n6) at (20, 1) [test] {};

  \path[->] (n1) edge [right] node {$e_1$} (n2) ;
	\path[->] (n2) edge [right] node {$e_2$} (n4) ;
	\path[->] (n3) edge [right] node {$e_3$} (n5) ;
	\path[->] (n5) edge [left] node {$e_4$} (n6) ;  
   
   \node (m4) at (15.8,4.6) [test4] {$4$};
    \node (m1) at (17.3,10.6) [test4] {$1$};
    \node (m2) at (17,6.6) [test4] {$2,3$};
    
    \node (m5) at (20.7,4.6) [test4] {$5$};
    \node (m6) at (20.7,1.6) [test4] {$6$};
    \node (m7) at (15, 11) [test2] {$\tG (o)$};
    
\end{tikzpicture}

\caption{A boundary divisor $D$ for the multi-edged tree $G$ and the corresponding $\tG (o)$ graph.}
\end{center}
\end{figure}
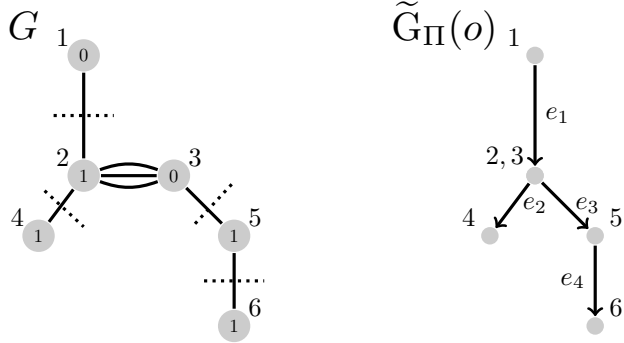

\end{example}

We are now ready to prove the main result of this section.

\begin{proof}[Proof of Proposition \ref{hom prop 1}]
We prove that $\beta_{k,D} > 0$ by constructing a special type of $(k-1)$-dimensional cycle and showing that it is not in $\img \partial_{k}$.

Let $\{\pi_1,\pi_2,...,\pi_{k+1}\}$ be $\Pi$'s connected components. Assume we constructed a $(k-1)$-cycle in $\Delta_D$ that contains exactly one $\pi_1$-essential face. If this cycle were in $\img \partial_k$, then by lemma \ref{essential lemma}, presented above, we would obtain that there has to be a second $\pi_1$-essential face in the cycle. Thus, after we construct a $(k-1)$-cycle in $\Delta_D$ that contains exactly one $\pi_1$-essential face, we would know that the $(k-1)$-st reduced homology group of $\Delta_D$ is not trivial. Hence, we would get $\beta_{k,D} > 0$.  

We turn to the construction of such a cycle in the boundary simplicial complex $\Delta_D^{(b)}$. Take $o_1$ to be the unique orientation with unique source $s = \pi_1$. By proposition \ref{prop1} we obtain that $\ff (o_1) \sim D$. Now, from each of the boundaries $\B(\pi_j, T(j))$ ($1 < j \leqslant k+1$) we choose one vertex $b_j$. Because $B = \{b_2,b_3,...,b_{k+1}\} \subset \ff(o_1)$, we obtain that the face $[b_2,b_3,...,b_{k+1}]$ is contained in $\Delta_D^{(b)}$. Moreover, this is a $\pi_1$-essential face. We construct an extension cycle with base $[b_2,b_3,...,b_{k+1}]$.

We now need to choose extension vertices. We want to choose appropriate $e_j$'s such that the faces $[b_2,..., \hat{b_j},...,b_{k+1},e_j]$ are the extensions of our cycle. We pick $e_j$ in the following way. Let $m \in \mathbb{N}$ minimal such that 

\[
B(\T^{(m+1)}(j), \T^{(m)}(j)) \setminus B \not = \emptyset.
\]  
Here, by $T^{(m)}$ we mean $T$ composed with itself $m$ times. If $m = 0$ we consider the identity map. To check that such a $m$ exists note that $\pi_1 \cap \supp(\ff (o_1)) = \emptyset$. Now, we choose any $e_j$ in $B(\T^{(m+1)}(j), \T^{(m)}(j)) \setminus B$. Consider the orientation $o_j$ that changes the orientation between $\T^{(l)}(j)$ and $\T^{(l+1)}(j)$ for $0 \leqslant l \leqslant m$, and preserves the orientation given by $o_1$ to the remaining edges. Then, by the minimality of $m$, we obtain $\{b_2,..., \hat{b_j},...,b_{k+1},e_j\} \subset \supp( \ff(o_j))$. Hence, $[b_2,..., \hat{b_j},...,b_{k+1},e_j] \in \Delta_D^{(b)}$. Recalling that for a multi-edged tree all the orientations on $\tG$ belong to the same equivalence class, the reader can check that the roof faces needed for this extension cycle are present in $\Delta_D^{(b)}$ and that the base face remains the only $\pi_1$-essential face in this cycle. In particular, the reader can verify that for the multi-edged tree presented in Example $9$, the construction described here determines uniquely the cycle mentioned there. Nonetheless, it should be clear that in general this construction produces an extension cycle, but is not necessarily unique. 

This construction shows that $\beta_{k,D} > 0$ when $D$ is a boundary divisor and $G$ is a multi-edged tree.  
\end{proof}

\section*{Acknowledgements}
This work was done as part of the REU program at University of Minnesota, Twin Cities, funded by the NSF grants DMS-1148634 and DMS-1001933. 
The author would like to thank professors Gregg Musiker, Vic Reiner, Dennis Stanton and Pavlo Pylyavskyy for mentoring this combinatorics program. The author is deeply grateful to Prof. Gregg Musiker for introducing him to the problem and for providing invaluable guidance, suggestions and help, and to Prof. Vic Reiner for fruitful discussions and for pointing out reference \cite{BC}. The author would also like to offer special thanks to Alexander Garver for contributing to the clarity of the exposition and for proofreading the material.

\noindent {{\textup{Princeton University, Department of Mathematics, Princeton, NJ 08544, USA}}}

\noindent \emph{E-mail address}: \textbf{hmania@princeton.edu}

\end{document}